\newtheorem{theorem}{Theorem}
\newtheorem{definition}[theorem]{Definition}
\newtheorem{lemma}[theorem]{Lemma}
\newtheorem{proposition}[theorem]{Proposition}
\theoremstyle{definition}
\newtheorem{remark}[theorem]{Remark}
\title{Integration in \v{C}ech theories and a bound on entropy}
\author[Hern\'andez-Corbato]{Luis Hern\'andez-Corbato}
\author[Nieves-Rivera]{D. J. Nieves-Rivera}
\author[Ruiz del Portal]{Francisco R. Ruiz del Portal}
\author[S\'anchez-Gabites]{J. J. S\'anchez-Gabites}
\subjclass[2020]{55N05, 55M05, 37B40}
\keywords{}
\thanks{The authors have been supported by grant PGC2018-098321-B-I00. The second author is a beneficiary of a predoctoral contract financed by the Santander-UCM predoctoral aid program. The fourth author acknowledgments support from the Ram\'on y Cajal programme RYC2018-025843-I. On behalf of all authors, the corresponding author states that there is no conflict of interest.}
\address{Departamento de \'Algebra, Geometr\'{\i}a y Topolog\'{\i}a\\ Universidad Complutense de Madrid \\ Plaza de Ciencias 3 \\ 28040 Madrid \\ Spain}
\email{luiherna@ucm.es, davidjni@ucm.es, rrportal@ucm.es\\ jaigabites@mat.ucm.es}
\begin{document}

\begin{abstract}
The evaluation of Alexander--Spanier cochains over formal simplices in a topological space leads to a notion of integration of Alexander--Spanier cohomology classes over \v{C}ech homology classes. The integral defines an explicit and non-degenerate pairing between the Alexander--Spanier cohomology and the \v{C}ech homology. Instead of working on the limits that define both groups, most of the discussion is carried out ``at scale $\mathcal U$'', for an open covering $\mathcal U$. As an application, we generalize a result of Manning to arbitrary compact spaces $X$: we prove that the topological entropy of $f \colon X \to X$ is bounded from below by the logarithm of the spectral radius of the map induced in the first \v{C}ech cohomology group.
\end{abstract}

\maketitle

\section{Introduction}

The topological entropy \cite{adler, katokhassel1} of a dynamical system is a measure of its complexity. Positive entropy is interpreted as the existence of chaotic dynamics. The precise computation of the entropy is very complicated except for some specific examples, but often in applications it is sufficient to obtain a positive lower bound for it.

The action of a map $f \colon X \to X$ on the homology or cohomology groups of $X$ is a good indicator of how involved the dynamics is. The entropy conjecture states that for $C^1$ maps on compact manifolds the entropy is bounded from below by the logarithm of the spectral radius of the induced map $f_* \colon H_*(X) \to H_*(X)$.
The conjecture has been proved in several instances, for example in the $C^{\infty}$ case \cite{yomdin} or when $X$ is a torus \cite{entropytori} or a nilmanifold \cite{entropynilmanifolds}, but remains open in general. A classical partial result is due to Manning \cite{manning}: the conjecture holds, even for continuous maps, if only the spectral radius of $f_*$ over the first homology group is considered.

The statement in \cite{manning} holds for spaces $X$ more general manifolds, but they must still satisfy some nice local features. Manning remarks at the end of his paper: \emph{``Although \v{C}ech cohomology theory would seem most appropriate for relating cohomology eigenvalues to topological entropy as defined in \cite{adler} by refinements of open covers we have been unable to exploit this approach.''} Motivated by this, our Theorems \ref{teo:manning1} and \ref{teo:nmanning} generalize the theorem of Manning to arbitrary compact spaces in terms of \v{C}ech cohomology. When $X$ is locally connected we recover the sharp bound $h(f) \ge \mathrm{log}|\lambda|$ for every eigenvalue $\lambda$ of the map $f_*$ induced in the first \v{C}ech cohomology group while in the completely general, non--locally connected case, we obtain a weaker inequality: $h(f) \ge (\mathrm{log}|\lambda|)/d$, where $d \in \mathbb Z^+$ is the degree of $\lambda$ as an algebraic number. This bound is still positive when $|\lambda| > 1$, as Manning's original bound.

In dynamical systems \v{C}ech theories are sometimes more useful than singular cohomology and homology theories because they are better suited (due to the continuity axiom they satisfy) to describe pathological spaces with bad local topology such as strange attractors. A standard reference for these theories is \cite{Eilenberg_and_Steenrod_1952_A}. The traditional approach to \v{C}ech theories is based on simplicial complexes that are extrinsic to the space under inspection, such as the nerve of an open covering in the original description by \v{C}ech \cite{cech} or the Vietoris complex associated to it \cite{vietoris}. By contrast, in this article we work with intrinsic descriptions both for cohomology and homology. On one hand we make use of Alexander-Spanier cohomology \cite{alexandercohomology, Spanier_1948_A}, a theory whose $q$--cochains are simply functions from $(q+1)$--tuples of points of $X$ that satisfy an algebraic relation that codifies the coboundary operator. This cohomology theory coincides with \v{C}ech cohomology theory in paracompact spaces \cite{hurewiczdugundjidowker} but does not have a simple dual construction of chains and homology groups. Regarding homology, we take the approach of Vietoris. For a fixed open covering $\mathcal U$ of $X$ we work with $\mathcal U$--small chains and define a homology group at scale $\mathcal U$ that disregards the local structure of $X$ contained in any element of $\mathcal U$ and is a reinterpretation of (and, in particular, isomorphic to) the homology group of the Vietoris complex $V(\mathcal U)$. The limit of these groups as $\mathcal U$ ranges over the open coverings of $X$ is called the Vietoris homology group of $X$ (the original formulation considered open coverings by balls of radius $\epsilon$ in a metric space \cite{hockingyoung}). In paracompact spaces it is isomorphic to the \v{C}ech homology group \cite{lefschetz, hurewiczdugundjidowker}.

One of the main ingredients in the generalization of Manning's theorem is a bilinear pairing between \v{C}ech homology and cohomology that is interesting on its own. In Section \ref{sec:integral} we define a notion of integration of Alexander--Spanier cohomology classes over \v{C}ech homology classes reminiscent of the integration of differential forms over simplices. The definition is possible thanks to the intrinsic descriptions mentioned above and is based on the evaluation of the Alexander-Spanier cochains over sufficiently small simplices, which we call ``formal'' because their information is reduced to its set of vertices. Integration defines an explicit and non-degenerate bilinear pairing between \v{C}ech cohomology and homology on compact topological spaces in a similar fashion to what integration of differential forms does between de Rham's cohomology and singular homology on differentiable manifolds. It is worth noticing that the concept of formal simplex that we use was already present in the literature from the early developments of algebraic topology under the name of abstract or symbolic simplex, see for example \cite{alexandercohomology}. Also, it was known that Alexander-Spanier cohomology and Vietoris homology theories could be constructed from the same system of complexes \cite{hurewiczdugundjidowker}. Our contribution in this framework is to establish an elementary and explicit pairing between these theories. As a side note, there are related notions of integration in the literature, see \cite{asada} or \cite[p. 128]{halperin}.

%integration of Alexander--Spanier cochains over singular simplices was discussed in \cite{asada}. \textcolor{red}{CITAR AL JAPO O NO CITAR}

Alexander-Spanier cocycles can be stratified in terms of the open covering of $X$ where the cocycle condition holds. In view of the definition of the integral, an Alexander-Spanier cocycle whose coboundary identically vanishes at each element of $\mathcal U$ (which we gather in a group denoted $H^*_{\mathcal U}(X)$) can be integrated over $\mathcal U$--small chains. In Section \ref{sec:nondegenerate} we prove that the integral defines a nondegenerate pairing at scale $\mathcal U$ if coefficients are taken in a field. The article studies these cohomology and homology groups at scale $\mathcal U$ for a fixed $\mathcal U$. They lead to straightforward proofs of the results in \cite{finite_cech}, where \v{C}ech homology was obtained as a limit of homology groups based on $\epsilon$--continuous simplices (see Section \ref{sec:homology}), and give a clear interpretation of a result of Keesee \cite{keesee} that states that any Alexander-Spanier cohomology class has a representative that takes a finite number of different values (see Proposition \ref{prop:imagenfinita}).

Sections \ref{sec:background} and \ref{sec:homology} contain background material on \v{C}ech theories, notably the definition of Alexander-Spanier cohomology and the homology groups at scale $\mathcal U$. Some results about the first homology group and the cohomology groups are presented in Sections \ref{sec:1dim} and \ref{sec:imagenfinita}. The integral is defined in Section \ref{sec:integral}, and we prove its nondegeneracy in the ensuing section. The results on entropy and their proofs build on all the preliminary work and are presented in the last part of the article. We also provide examples that show the need to use eigenvalues in cohomology instead of homology to bound the entropy. The non--locally connected case requires some technical work that is the content of Section \ref{sec:nmanning}.

\section{Background on \v{C}ech homology and cohomology}\label{sec:background}

\subsection{Classical definition}

Let $X$ be any topological space. Consider an open covering $\mathcal{U}$ of $X$. There are two classical simplicial complexes associated to $\mathcal{U}$. One is its nerve (or \v{C}ech complex \cite{cech}) $N(\mathcal U)$, which is defined as follows:

\begin{itemize}
	\item[(i)] It has a vertex for each member of $\mathcal{U}$.
	\item[(ii)] A (finite) collection of vertices spans a simplex if and only if the corresponding members of $\mathcal{U}$ have a nonempty intersection.
\end{itemize}

Another one, which will be more useful for us, is the Vietoris complex $V(\mathcal U)$ of $\mathcal{U}$. Its original conception \cite{vietoris} and later and recent developments (which have shifted their denomination to Vietoris-Rips complex) have been mainly restricted to metric spaces but here we present a definition in terms of an open covering:
\begin{itemize}
	\item[(i)] Its vertices are the points of $X$.
	\item[(ii)] A collection of vertices $x_0,\ldots,x_q$ spans a simplex in $V(\mathcal{U})$ if there exists $U \in \mathcal{U}$ that contains all of them.
\end{itemize}

These two complexes are, in the sense explained by Dowker \cite{dowker1}, ``dual'' to each other and, in particular, have isomorphic homology and cohomology groups.

Refining the open covering $\mathcal{U}$ produces complexes that approximate the space $X$ ever better, and in the limit the homology and cohomology groups of $N(\mathcal{U})$ or $V(\mathcal{U})$ constitute algebraic topological invariants of $X$. Let us describe this more precisely. Given a refinement $\mathcal V$ of $\mathcal U$, there are simplicial maps $N(\mathcal V) \to N(\mathcal U)$ and $V(\mathcal V) \to V(\mathcal U)$. The latter is an inclusion while the former is not uniquely defined, because $V \in \mathcal V$ may be contained in several $U \in \mathcal U$, but all the choices give rise to contiguous maps and, in particular, are homotopic \cite[Section 3.5]{spanier1}.
These bonding maps provide the family of all (\v{C}ech or Vietoris) complexes that arise from open coverings $\mathcal U$ of $X$ the structure of an inverse system, denoted by $\{N(\mathcal U)\}$ and $\{V(\mathcal U)\}$, respectively. Applying the homology functor we obtain two inverse systems of homology groups. Their inverse (or projective) limits, referred to as \v{C}ech and Vietoris homology groups, are evidently isomorphic by the result of Dowker, although this fact was already known by that time \cite[VII (26.1)]{lefschetz}. The construction for cohomology is analogous, but let us remark that there is no Vietoris cohomology theory formulated as such. Since the cohomology functor is contravariant we obtain directed systems of groups that are pro--isomorphic and, in particular, have isomorphic (direct) limits. The groups defined in this way are called \v{C}ech homology and \v{C}ech cohomology groups and denoted by $\check{H}_*(X)$ and $\check{H}^*(X)$, respectively.

The coefficient group becomes relevant in some parts of the article. Unless explicitly stated, we assume it is an $R$--module $G$ and usually omit it from the notation.

\subsection{Alexander--Spanier cohomology}

As mentioned in the Introduction, it is also possible to describe the cohomology groups $\check{H}^q(X)$ in an intrinsic manner using the language of Alexander--Spanier cohomology. We first recall its definition from \cite{spanier1} (see also \cite{Spanier_1948_A}). For $q \geq 0$ a $q$--cochain is a (not necessarily continuous) map $\xi \colon X^{q+1} \to G$. The set $C^q(X)$ of $q$--cochains forms an $R$--module. The coboundary of a cochain is defined in the usual fashion: $\delta \xi(x_0,\ldots,x_{q+1}) := \sum_{j=0}^q (-1)^j \xi(x_0,\ldots,\widehat{x_j},\ldots,x_q)$. A cochain is said to be locally zero if there exists an open covering $\mathcal{U}$ of $X$ such that $\xi(x_0,\ldots,x_q) = 0$ whenever $x_0,\ldots,x_q$ all belong to the same element of $\mathcal{U}$. The subset of locally zero cochains is a submodule of $C^q(X)$ and we denote by $\overline{C}^q(X)$ the quotient module of all cochains modulo the locally zero ones. The coboundary of a locally zero cochain is again locally zero, and so $\delta$ descends to a coboundary operator on $\overline{C}^q(X)$ which we denote again with the same symbol. The cohomology of the complex $\{\overline{C}^q(X),\delta\}$ is, by definition, the Alexander--Spanier cohomology of $X$ (with coefficients in $G$). It is isomorphic to the \v{C}ech cohomology of $X$ when $X$ is paracompact (\cite{hurewiczdugundjidowker} and \cite[Corollary 8, p. 334]{spanier1}) or, in general, when they are defined using the same family of coverings \cite{dowker1}. 

Now suppose an open covering $\mathcal{U}$ of $X$ is fixed. $\mathcal U$ shall be thought of as the ``scale'' below which the structure of $X$ is disregarded. With minor modifications (already mentioned by Spanier \cite[Appendix A]{Spanier_1948_A}) the above definitions can be adapted to define another cohomology group as follows. Say that a $q$--cochain $\xi$ is $\mathcal{U}$--locally zero if $\xi(x_0, \ldots, x_q) = 0$ whenever $x_0,\ldots,x_q$ belong to some member of $\mathcal{U}$. Heuristically, these cochains ignore all the structure of $X$ below scale $\mathcal{U}$. It is straightforward to check that the coboundary of a $\mathcal{U}$--locally zero cochain is again $\mathcal{U}$--locally zero. Thus we may quotient $C^q(X)$ by the submodule of $\mathcal{U}$--locally zero cochains to obtain a module $\overline{C}^q_{\mathcal{U}}(X)$. The coboundary operator descends to a coboundary operator on this module yielding a cochain complex $\{\overline{C}^q_{\mathcal{U}}(X),\delta\}$. We denote the cohomology of this complex by $H^q_{\mathcal{U}}(X)$. This is, therefore, very similar to the definition of the Alexander--Spanier cohomology of $X$ but using a single covering $\mathcal{U}$ instead of letting $\mathcal{U}$ run over all open coverings. Notice that, by definition, the class $\overline{\xi} \in \overline{C}^q_{\mathcal{U}}(X)$ of a cochain $\xi$ is a cocycle if and only if $\delta \xi$ vanishes over every member of $\mathcal{U}$; similarly, $\overline{\xi} = \overline{\eta}$ means that $\xi$ and $\eta$ agree when evaluated on tuples that are contained in some member of $\mathcal{U}$.

If $\mathcal{V}$ refines $\mathcal{U}$ then every $\mathcal{U}$--locally zero cochain is also $\mathcal{V}$--locally zero and so there is a natural homomorphism $\overline{C}^q_{\mathcal{U}}(X) \to \overline{C}^q_{\mathcal{V}}(X)$. This map commutes with the coboundary operator and hence induces a homomorphism $\pi_{\mathcal U \mathcal V} \colon H^q_{\mathcal{U}}(X) \to H^q_{\mathcal{V}}(X)$. There is, then, a direct system whose terms are $H^q_{\mathcal{U}}(X)$ and whose bonding maps are the maps $\pi_{\mathcal U \mathcal V}$.

\begin{proposition}\label{prop:limUcohomologia} The direct limit of $\{ H^q_{\mathcal{U}}(X) ; \pi_{\mathcal U \mathcal V}\}$ is precisely the Alexander--Spanier cohomology of $X$. Therefore, under general hypothesis (described above) it is also isomorphic to the \v{C}ech cohomology of $X$, $\check{H}^q(X)$.
\end{proposition}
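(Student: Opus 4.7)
The plan is to work at the level of cochain complexes and then pass to cohomology using exactness of the direct limit functor. For each open covering $\mathcal{U}$ of $X$, let $A^q_{\mathcal{U}} \subseteq C^q(X)$ denote the submodule of $\mathcal{U}$--locally zero $q$--cochains, and let $A^q \subseteq C^q(X)$ denote the submodule of all locally zero cochains (i.e.\ those which are $\mathcal{U}$--locally zero for some $\mathcal{U}$). By the very definition of local vanishing, one has the set--theoretic identity $A^q = \bigcup_{\mathcal{U}} A^q_{\mathcal{U}}$, where the union is directed because refining $\mathcal{U}$ to $\mathcal{V}$ yields $A^q_{\mathcal{U}} \subseteq A^q_{\mathcal{V}}$ (any tuple sitting inside some $V \in \mathcal{V}$ also sits inside the $U \in \mathcal{U}$ containing $V$).

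The first key step is to identify the colimit of the cochain modules. Since each bonding map $\overline{C}^q_{\mathcal{U}}(X) \to \overline{C}^q_{\mathcal{V}}(X)$ is induced by the inclusion $A^q_{\mathcal{U}} \subseteq A^q_{\mathcal{V}}$ together with the identity on $C^q(X)$, the universal property yields a canonical map
\[
\Phi_q \colon \varinjlim_{\mathcal{U}} \overline{C}^q_{\mathcal{U}}(X) \longrightarrow \overline{C}^q(X) = C^q(X)/A^q .
\]
I would show $\Phi_q$ is an isomorphism: it is surjective because every representative $\xi \in C^q(X)$ already defines an element of each $\overline{C}^q_{\mathcal{U}}(X)$, and injective because if $\overline{\xi} \in \overline{C}^q_{\mathcal{U}}(X)$ becomes locally zero in $\overline{C}^q(X)$ then $\xi \in A^q_{\mathcal{W}}$ for some $\mathcal{W}$, and taking a common refinement $\mathcal{V}$ of $\mathcal{U}$ and $\mathcal{W}$ shows $\overline{\xi}$ maps to zero in $\overline{C}^q_{\mathcal{V}}(X)$, hence vanishes in the colimit.

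The second step is to observe that the isomorphisms $\Phi_q$ are compatible with the coboundary operator $\delta$, since $\delta$ is defined on $C^q(X)$ and descends to both sides through compatible quotient maps. Therefore the $\Phi_q$ assemble into an isomorphism of cochain complexes between $\varinjlim_{\mathcal{U}} \{\overline{C}^\bullet_{\mathcal{U}}(X), \delta\}$ and $\{\overline{C}^\bullet(X), \delta\}$.

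The third and final step invokes the standard fact that the direct limit of $R$--modules over a directed system is an exact functor, so it commutes with cohomology of a cochain complex. Hence
\[
\varinjlim_{\mathcal{U}} H^q_{\mathcal{U}}(X) \;\cong\; H^q\!\left(\varinjlim_{\mathcal{U}} \overline{C}^\bullet_{\mathcal{U}}(X)\right) \;\cong\; H^q\!\left(\overline{C}^\bullet(X)\right),
\]
which by definition is the Alexander--Spanier cohomology of $X$. The isomorphism with \v{C}ech cohomology under paracompactness (or under the more general hypothesis that both theories are built from the same family of coverings) is then just the citation from \cite{hurewiczdugundjidowker} and \cite[Cor.~8, p.~334]{spanier1}. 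There is no real obstacle in this argument; the only point requiring care is verifying that the naive identification of the colimit of quotients with the quotient by the union of submodules really does hold at the cochain level, which is the content of the first step above.
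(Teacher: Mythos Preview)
Your proof is correct and follows exactly the same approach as the paper: identify $\varinjlim_{\mathcal{U}} \overline{C}^q_{\mathcal{U}}(X)$ with $\overline{C}^q(X)$ and then use that direct limits commute with taking cohomology. The paper's proof simply asserts these two facts in one line each, whereas you have spelled out the colimit identification (surjectivity and injectivity of $\Phi_q$) in detail.
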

\begin{proof} It follows directly from its definition that $\overline{C}^q(X)$ can be identified with the direct limit of $\{ \overline{C}^q_{\mathcal{U}}(X)\}$. Then the result owes to the fact that the homology functor commutes with direct limits.
\end{proof}

\section{An alternative description of \v{C}ech homology}\label{sec:homology}

In this section we perform a quick reformulation of the Vietoris homology theory. We introduce the notion of a formal simplex small with respect to an open covering $\mathcal U$ and construct the homology groups at scale $\mathcal U$, $H_*^{\mathcal U}(X)$. These are isomorphic to $H_*(V(\mathcal U))$. As an application we recover easily the results of \cite{finite_cech} on $\epsilon$--homology groups.

A formal $q$--simplex in $X$ is just an ordered collection $\sigma = (x_0\ \ldots\ x_q)$ of $(q+1)$ points in $X$. We call the $x_i$ the vertices of $\sigma$ and say that $\sigma$ is $\mathcal U$--small if all its vertices are contained in some element of $\mathcal U$. (A formal $q$--simplex is just a point in $X^{q+1}$, but it is best to picture it as a subset of $X$).

A formal $q$--chain in $X$ is a finite linear combination $c = \sum_i k_i \sigma_i$ of formal $q$--simplices $\sigma_i$ with coefficients $k_i \in G$. We shall say that $c$ is $\mathcal{U}$--small if all the $\sigma_i$ are $\mathcal{U}$--small and denote by $S_q^{\mathcal{U}}(X)$ the set of all $\mathcal{U}$--small formal $q$--chains. 
The boundary of a formal simplex $\sigma = (x_0 \ \ldots \ x_q)$ is defined in the usual way to be the formal chain $\partial \sigma = \sum_{j=0}^q (-1)^j (x_0 \ \ldots\ \widehat{x_j}\ \ldots \ x_q)$, where $(x_0\ \ldots\ \widehat{x_j}\ \ldots \ x_q)$ is the $(q-1)$--formal simplex determined by the ordered collection $x_0,\ldots,x_q$ from which $x_j$ has been removed. The boundary of a $\mathcal{U}$--small formal simplex is clearly $\mathcal{U}$--small again, and so there is a chain complex $\{S_q^{\mathcal{U}}(X),\partial\}$. We shall denote its homology by $H_q^{\mathcal{U}}(X;G)$ and refer to it as the homology of $X$ at scale $\mathcal{U}$.
%Clearly the chain complex $\{S_q^{\mathcal{U}}(X),\partial\}$ is isomorphic to the simplicial chain complex of $V(\mathcal{U})$ and so in particular $H_q^{\mathcal{U}}(X) = H_q(V(\mathcal{U}))$.

Note that if we disregard the ordering of the vertices that define a formal simplex and discard simplices with repeated vertices we obtain the geometric simplices from the Vietoris complex $V(\mathcal U)$. Otherwise stated, formal simplices are just ordered simplices of $V(\mathcal U)$. Hence there is a map from $S_q^{\mathcal{U}}(X)$ to the simplicial chains of $V(\mathcal{U})$ defined by sending a formal simplex $(x_0 \ \ldots \ x_q)$ to the same oriented simplex of $V(\mathcal{U})$ if all the vertices are different and to zero otherwise. This map induces isomorphisms in homology (see for example \cite[Theorem 13.6, p. 77]{munkres}) and therefore $H_q(V(\mathcal U);G)$ can be identified with $H_q^{\mathcal{U}}(X;G)$. Furthermore, if $\mathcal V$ refines $\mathcal U$, the previous isomorphism conjugates the bonding maps $H_q(V(\mathcal V)) \to H_q(V(\mathcal U))$ and $H^{\mathcal V}_q(X) \to H^{\mathcal U}_q(X)$. Therefore, the limit of the inverse system of groups $\{H^{\mathcal U}_q(X)\}$ is isomorphic to the Vietoris $q$--th homology group, which is in turn isomorphic to the \v{C}ech homology group $\check{H}_q(X)$. Throughout this article we directly refer to the limit of the homology groups at scale $\mathcal U$ as the \v{C}ech homology group.

An element $\gamma \in \check{H}_q(X)$ consists of a family $(\gamma^{\mathcal{U}})_{\mathcal{U}}$ of homology classes, where each $\gamma^{\mathcal{U}} \in H^{\mathcal{U}}_q(X)$. In turn each $\gamma^{\mathcal{U}}$ is represented by a (nonunique) chain $c$ that is $\mathcal{U}$--small. Somewhat abusing language, we shall call such a chain $c$ a \emph{$\mathcal{U}$--small representative of $\gamma$}.

Intuitively $H_q^{\mathcal{U}}(X)$ ignores all the structure of $X$ at scales smaller than $\mathcal{U}$. This idea is expressed in the following mathematical statement:

\begin{proposition} \label{prop:Ucycle} Let $c = \sum_{i=0}^n k_i \sigma_i$ be a $q$--cycle with $q \geq 1$. Assume that the vertices of all the $\sigma_i$ are contained in a single element $U \in \mathcal{U}$. Then $c$ is nullhomologous in $H_q^{\mathcal{U}}(X)$.
\end{proposition}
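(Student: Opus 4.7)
The plan is the classical cone construction, localized to the element $U$ that contains all the vertices of the simplices in $c$. Pick any point $x \in U$; the choice does not matter and one can even take $x$ to be a vertex of some $\sigma_i$.

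Define a cone operator on formal simplices by $x \ast (x_0\ \ldots\ x_q) := (x\ x_0\ \ldots\ x_q)$ and extend it linearly to chains. Because every vertex of each $\sigma_i$ lies in $U$ and $x \in U$, the formal $(q+1)$--simplex $x \ast \sigma_i$ is again $\mathcal{U}$--small, so $x \ast c \in S_{q+1}^{\mathcal{U}}(X)$. A direct computation from the definition of $\partial$ gives the standard cone identity
\begin{equation*}
\partial(x \ast \sigma) \;=\; \sigma \;-\; x \ast \partial \sigma
\end{equation*}
for every formal $q$--simplex $\sigma$ with $q \geq 1$, and this extends linearly to chains.

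Applying the identity to $c$ and using $\partial c = 0$, we obtain $\partial(x \ast c) = c$, which exhibits $c$ as the boundary of a $\mathcal{U}$--small $(q+1)$--chain and therefore proves that $[c] = 0$ in $H_q^{\mathcal{U}}(X)$.

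There is no real obstacle: the only point that requires $q \geq 1$ is that for $q = 0$ the relation $\partial(x \ast \sigma) = \sigma - x \ast \partial \sigma$ degenerates (since there is no $\partial$ acting on $0$--simplices in the usual way, an extra term $x$ appears), so the cone only trivializes cycles whose coefficients sum to zero. The hypothesis $q \geq 1$ precisely removes this issue, and the containment of all vertices in a single $U \in \mathcal{U}$ is exactly what guarantees that the cone stays $\mathcal{U}$--small.
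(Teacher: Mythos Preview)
Your proof is correct and essentially identical to the paper's own argument: both pick a point in $U$, form the cone operator on formal simplices, use the identity $\partial(x \ast \sigma) = \sigma - x \ast \partial \sigma$ (equivalently $\partial C_a(\sigma) + C_a(\partial \sigma) = \sigma$), and apply it to the cycle $c$ to exhibit it as a boundary of a $\mathcal{U}$--small chain.
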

\begin{proof}
Pick an arbitrary point $a \in U$. For every formal simplex $\sigma = (x_0 \ x_1 \dots \, x_p)$ whose vertices lie in $U$ consider $C_a(\sigma) = (a \ x_0 \ x_1 \ldots \, x_p)$, a sort of formal cone of $\sigma$ over $a$ which is still contained in $U$. This operation on formal simplices satisfies

\centerline{
$\partial C_a(\sigma) + C_a(\partial \sigma) = \sigma$
}

\noindent and the identity holds for chains as well if we extend $C_a$ linearly. If we apply the identity to a cycle $c$, we obtain that $c = \partial C_a(c)$ is a boundary and the conclusion follows.

\end{proof}

For later reference we record here the following remark which is not obvious from the definition of $H_*^{\mathcal{U}}(X)$:

\begin{remark} \label{rem:f_gen} If the covering $\mathcal{U}$ is finite, then $H^{\mathcal{U}}_*(X)$ is finitely generated.
\end{remark}
\begin{proof} Since $\mathcal{U}$ is finite its nerve $N(\mathcal{U})$ is a finite simplicial complex and therefore $H_*(N(\mathcal{U}))$ is finitely generated too. By the result of Dowker \cite{dowker1} mentioned earlier this is isomorphic to $H_*(V(\mathcal{U}))$, which in turn is isomorphic to $H^{\mathcal{U}}_*(X)$.
\end{proof}

Compared with singular simplices, formal simplices carry very little information: just the vertices of the simplex. For this reason it is very easy to define functors from other homology theories to the homology groups $H^{\mathcal{U}}_*(X)$. To illustrate this idea we consider the description of \v{C}ech homology in terms of $\epsilon$--continuous simplices given in \cite{finite_cech}. First, we need some definitions. The space $X$ will be assumed to be compact and metric. A map $f$ between metric spaces is called $\epsilon$--continuous if there exists $\delta > 0$ such that $d(f(x), f(x')) < \epsilon$ for every $x, x'$ at a distance at most $\delta$. Roughly speaking, $f$ does not exhibit discontinuities of size greater than $\epsilon$.

Let $\Delta_q$ denote the standard $q$--simplex. An $\epsilon$--continuous $q$--simplex is an $\epsilon$--continuous map $\sigma : \Delta_q \to X$. The free group generated by these is denoted by $S^{\epsilon}_q(X)$. There is a boundary operator $\partial : S_q^{\epsilon}(X) \to S_{q-1}^{\epsilon}(X)$ defined in the exact same way as in the singular theory and satisfies $\partial^2 = 0$.  The homology of the chain complex $(S^{\epsilon}_*(X), \partial)$ is denoted by $H^{\epsilon}_*(X)$. Given $0 < \epsilon' < \epsilon$, any $\epsilon'$--continuous simplex is automatically an $\epsilon$--continuous simplex, so there exist natural inclusion maps $S^{\epsilon'}_q(X) \to S^{\epsilon}_q(X)$ which carry over to the homology groups and yield an inverse system $\{H^{\epsilon}_*(X)\}$. Then:
\begin{theorem}[{\cite[Corollary 1, p. 88]{finite_cech}}]
The inverse limit of the system is isomorphic to $\check{H}_*(X)$.
\end{theorem}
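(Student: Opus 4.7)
The plan is to construct explicit chain-level maps in both directions between the $\epsilon$-continuous and formal theories, and then identify the two inverse limits. Since $X$ is compact metric, the coverings $\mathcal{U}_\epsilon$ of $X$ by open $\epsilon$-balls are cofinal among all open coverings, so by the reinterpretation in Section~\ref{sec:homology} we have $\check{H}_*(X)\cong\varprojlim_\epsilon H^{\mathcal{U}_\epsilon}_*(X)$. Hence the task reduces to producing an isomorphism of inverse systems between $\{H^\epsilon_*(X)\}$ and a cofinal subsystem of $\{H^{\mathcal{U}_\epsilon}_*(X)\}$.

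In one direction, the plan is to define a chain map $\Phi\colon S^{\mathcal{U}_\epsilon}_q(X)\to S^{3\epsilon}_q(X)$ as follows. Choose once and for all, for each $q$, a partition $\rho_q\colon\Delta_q\to\{0,\ldots,q\}$ that assigns each point to the index of its largest barycentric coordinate (ties broken by lowest index); this choice is compatible with face inclusions in the sense that $\rho_q$ restricts to $\rho_{q-1}$ on each face of $\Delta_q$. Given a $\mathcal{U}_\epsilon$-small formal simplex $(x_0,\ldots,x_q)$, set $\Phi(x_0,\ldots,x_q)(t)=x_{\rho_q(t)}$. The image is contained in $\{x_0,\ldots,x_q\}$, a set of diameter less than $2\epsilon$, so the map is $3\epsilon$-continuous with an arbitrary modulus. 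Face-compatibility of $\rho_q$ makes $\Phi$ commute with boundaries.

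In the other direction, given an $\epsilon$-continuous $\sigma\colon\Delta_q\to X$ with continuity modulus $\delta$, I would iterate barycentric subdivision $n=n(\sigma)$ times until every simplex $\tau$ of $\mathrm{sd}^n\Delta_q$ has diameter less than $\delta$; then $\sigma(\tau)$ has diameter less than $\epsilon$ and is contained in some element of $\mathcal{U}_\epsilon$. Define
\[
\Psi(\sigma)=\sum_\tau \operatorname{sgn}(\tau)\,\bigl(\sigma(v_0^\tau),\ldots,\sigma(v_q^\tau)\bigr)\in S^{\mathcal{U}_\epsilon}_q(X),
\]
the sum ranging over the top-dimensional simplices $\tau$ of $\mathrm{sd}^n\Delta_q$ with their induced orientations and vertices $v_i^\tau$. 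To make $\Psi$ into an honest chain map, I would pick $n(\sigma)$ large enough that it also subdivides each iterated face sufficiently, and then check that different valid choices of $n$ give the same class in $H^{\mathcal{U}_\epsilon}_*(X)$: any two such formal chains differ by cycles supported in a single covering element, which are nullhomologous by Proposition~\ref{prop:Ucycle}.

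Finally, I would verify on homology that the compositions $\Phi\circ\Psi$ and $\Psi\circ\Phi$ coincide, up to the scale shifts introduced above, with the bonding maps of the respective inverse systems. For $\Psi\circ\Phi$ applied to a formal simplex $(x_0,\ldots,x_q)$, every subsimplex produced has its vertices in $\{x_0,\ldots,x_q\}$ and hence in a common element of $\mathcal{U}_\epsilon$, so Proposition~\ref{prop:Ucycle} forces the difference with the original to bound; a parallel argument controls $\Phi\circ\Psi$. Compatibility with the bonding maps as $\epsilon$ shrinks then yields the desired isomorphism $\varprojlim_\epsilon H^\epsilon_*(X)\cong\check{H}_*(X)$. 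The step I expect to be most delicate is the coherent management of the subdivision level $n(\sigma)$ in the definition of $\Psi$: turning it into a chain map requires a prism-style argument whose bookkeeping is fiddly, but each obstruction encountered is a cycle inside a single covering element and is therefore killed by Proposition~\ref{prop:Ucycle}.
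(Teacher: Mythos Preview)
Your outline follows the same architecture as the paper's proof: a discretization map (your $\Psi$, the paper's $D$) and an interpolation map (your $\Phi$, the paper's $I$), together with the verification that both composites coincide with bonding maps, yielding an interleaving of the two inverse systems. The paper's interpolation also uses a vertex-valued partition of $\Delta_q$ (their map $J_q$), so your $\Phi$ is essentially the same construction.

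The main difference, and the place where your proposal is weaker, is in the discretization direction. The paper first replaces $S^\epsilon_q(X)$ by the subcomplex $E^\epsilon_q(X)$ generated by simplices whose \emph{image} already has diameter $<\epsilon$, and shows via the standard barycentric-subdivision chain-homotopy that the inclusion $E^\epsilon_q(X)\hookrightarrow S^\epsilon_q(X)$ is a quasi-isomorphism. After this reduction, $D$ simply reads off the vertices---no subdivision is needed and $D$ is an honest chain map. By contrast, you build the subdivision into $\Psi$ itself with a simplex-dependent level $n(\sigma)$, and this is precisely the bookkeeping you flag as delicate: when $n(\sigma)$ and $n(\sigma|_{\text{face}})$ differ, $\partial\Psi(\sigma)-\Psi(\partial\sigma)$ is not a cycle supported in a single covering element, so Proposition~\ref{prop:Ucycle} does not immediately kill it. One can repair this, but the paper's separation of concerns (subdivide first, then discretize) sidesteps the issue entirely.

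There is also a gap in your treatment of $\Phi\circ\Psi$. Proposition~\ref{prop:Ucycle} lives on the formal-chain side and does not apply to $\epsilon$-continuous chains, so ``a parallel argument'' is not available as stated. The paper handles $I\circ D$ with an explicit $\epsilon$-continuous homotopy $\tau^t$ (constant in $t$ except at $t=0$) and the associated prism operator, which furnishes a genuine chain homotopy to the inclusion $E^{\epsilon'}_q(X)\to E^{\epsilon}_q(X)$. You would need the same device here.
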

The proof in \cite{finite_cech} proceeds by establishing that the homology theory just defined has the continuity property and agrees with the singular theory over simplicial complexes, and therefore must coincide with \v{C}ech homology. We shall give an alternative proof by establishing an isomorphism between the limits of $\{H^{\epsilon}_*(X)\}$ and $\{H^{\mathcal U}_*(X)\}$.

The main ideas are very natural: given a (small in a sense later explained) $\epsilon$--continuous simplex $\sigma$ we may produce a formal simplex $D(\sigma)$ by disregarding all the information in $\sigma$ except for its vertices; conversely, given a formal simplex $\tau$ we may ``interpolate'' to produce an $\epsilon$--continuous simplex $I(\tau)$ whose vertices are those of the formal simplex.% 

We begin by replacing the chain complex $S^{\epsilon}_q(X)$ with its subcomplex $E^{\epsilon}_q(X)$ generated by the ($\epsilon$--continuous) maps $\tau \colon \Delta_q \to X$ whose image has a diameter less than $\epsilon$. Note that if $\tau \in S^{\epsilon}_q(X)$ is an arbitrary $\epsilon$--continuous simplex there exists $\delta$ such that $\tau$ carries any set of diameter less than $\delta$ to a set of diameter less than $\epsilon$ and so subdividing $\Delta_q$ barycentrically enough times we see that the restriction of $\tau$ to each member of the subdivision belongs to $E_q^{\epsilon}(X)$. Then a standard argument (see for example \cite[\S 31, pp. 175ff.]{munkres} or \cite[Appendix I, p. 211ff.]{vick1}) shows that the inclusion $i \colon E_q^{\epsilon}(X) \subseteq S^{\epsilon}_q(X)$ induces isomorphisms between the homology of both chain complexes.

\smallskip

{\it Discretization.} We now define a ``discretization operator'' $D \colon E^{\epsilon}_q(X) \to S^{\mathcal{U}}_q(X)$. Fix an open covering $\mathcal{U}$ of $X$ and let $\epsilon > 0$ be a Lebesgue number for $\mathcal{U}$. Recall that every chain in $E^{\epsilon}_q(X)$ is of the form $\sum k_i \tau_i$ where each $\tau_i \colon \Delta_q \to X$ is a map whose image has a diameter less than $\epsilon$ and is therefore contained in some member of $\mathcal{U}$. Hence we can define $D(\tau_i)$ to be the $\mathcal{U}$--small formal simplex $(\tau(v_0) \ \ldots \ \tau(v_q))$ and then extend linearly to chains by $D(c) = \sum k_i D(\tau_i)$. Geometrically, $D$ simply forgets all the information that a chain carries except for the vertices of its constituent summands. It is straightforward to check that $D$ commutes with the boundary operator so it induces a homomorphism in homology.

\smallskip

{\it Interpolation.} Now we want to define an ``interpolation operator'' $I \colon S^{\mathcal{U}}_q(X) \to E^{\epsilon}_q(X)$ which essentially ``fills in'' formal simplices to turn them into maps defined on all of $\Delta_q$. Although this can be done in several ways, it should be compatible with the boundary operator. Fix $\epsilon > 0$ and choose a covering $\mathcal{U}$ of $X$ whose members all have a diameter less than $\epsilon$.

We introduce the auxiliary map $J_q \colon \Delta_q \to \Delta_q$ defined as follows. Given a point $p \in \Delta_q$ with barycentric coordinates $(\lambda_0,\ldots,\lambda_q)$, find the smallest $i$ such that $\lambda_i > 0$ and set $J_q(p) := v_i$, the $i$-th vertex of $\Delta_q$. Given a $\mathcal{U}$--small formal simplex $(x_0 \  \ldots \ x_q)$ in $X$, we define a piecewise constant map $\tau \colon \Delta_q \to X$ at the vertices by $\tau(v_i) = x_i$ and then set $\tau(p) = \tau(J_q(p))$ for any other $p \in \Delta_q$.

%\begin{figure}[h!]
%\begin{pspicture}(0,0)(14,7.5)
%\psgrid(0,0)(14,7.5)
%\rput[bl](1,0){\scalebox{0.7}{\includegraphics{discretize.eps}}}
%\rput(0.8,4){$v_0$} \rput(3.5,4){$v_1$} \rput(0.9,6.8){$v_2$} 
%\rput(1.8,5){$\Delta_2$}
%\rput(6.5,0.6){$X$} \rput(8.6,1.3){$x_0$} \rput(9.45,1.9){$x_1$} 
%\rput(10.6,1.4){$x_2$}
%\rput(4,5.4){$=$} \rput(8,5.4){$\cup$} \rput(11.6,5.4){$\cup$}
%\end{pspicture}
%\caption{From a formal simplex to a (discontinuous) simplex \label{fig:interpolate}}
%\end{figure}

The image of $\tau$ is precisely $\{x_0,\ldots,x_q\}$, which is contained in some member of $\mathcal{U}$. In particular the diameter of the image of $\tau$ is less than $\epsilon$, and so $\tau$ is indeed an element of $E^{\epsilon}_q(X)$. We can therefore define a homomorphism $I \colon S^{\mathcal{U}}_q(X) \to E^{\epsilon}_q(X)$ by linearly extending the definition above to chains. It is straightforward to check that $I$ commutes with the boundary operators so it induces a homomorphism in homology.

\medskip

Let us examine the composition $D \circ I$. First we need to find what its source and target spaces are: given an open covering $\mathcal{U}$ we find a Lebesgue number $\epsilon$ for $\mathcal{U}$ and then another covering $\mathcal{U}'$ whose members all have a diameter less than $\epsilon$. Then $D \circ I \colon S^{\mathcal{U}'}_q(X) \to S^{\mathcal{U}}_q(X)$. It is straightforward to check that the map $D \circ I$ is just the inclusion $S^{\mathcal{U}'}_q(X) \subseteq S^{\mathcal{U}}_q(X)$.

The analysis of the reverse composition $I \circ D$ is only slightly more complicated. Given $\epsilon$, we find a covering $\mathcal{U}$ whose members have a diameter less than $\epsilon$ and then in turn some $\epsilon' < \epsilon$ which is a Lebesgue number for $\mathcal{U}$. Then the $I \circ D$ is a map $E^{\epsilon'}_q(X) \to E^{\epsilon}_q(X)$ which works in the following way: given a simplex $\tau' \colon \Delta_q \to X$, it discretizes it by retaining only its values on the vertices of $\Delta_q$ and then interpolates it to produce a new simplex $\tau \colon \Delta_q \to X$ using the prescription given above for $I$. %Notice that in general $\tau$ and $\tau'$ are different; for example, the former always has a finite image whereas the latter need not. Also, the image of $\tau = I \circ D (\tau')$ is a subset of the image of the original $\tau$ and so has a diameter less than $\epsilon'$. Hence, although formally the composition $I \circ D$ carries $E^{\epsilon'}_q(X)$ to $E^{\epsilon}_q(X)$, in fact its image lies in $E^{\epsilon'}_q(X)$ again.

The simplices $\tau$ and $\tau'$ can be easily connected through an $\epsilon$-continuous homotopy $\{\tau^t\} \colon \Delta_q \times [0,1] \to X$ defined by $\tau^0 = \tau'$ and $\tau^t = \tau$ for $0 < t \le 1$. This construction allows to define a prism operator $P \colon E^{\epsilon'}_q(X) \to E^{\epsilon}_{q+1}(X)$ (see for instance \cite[Theorem 2.10, p. 112]{hatcher1}) that satisfies

\centerline{
$P (\partial \tau') + \partial \,P(\tau') = \tau - \tau'$
}
\noindent Then, $P$ provides a chain equivalence between $I \circ D$ and the inclusion $E^{\epsilon'}_q(X) \to E^{\epsilon}_q(X)$.

%Since both $I$ and $D$ commute with the appropriate boundary operators, they induce well defined homomorphisms in homology. Summing up all the discussion so far we then have:
%\begin{itemize}
%	\item[(i)] Given $\epsilon$ and a covering $\mathcal{U}$ whose members have a diameter bounded above by $\epsilon$, a homomorphism \[I_* \colon H^{\mathcal{U}}_*(X) \to H^{\epsilon}_*(X).\]
%	\item[(ii)] Given an open covering $\mathcal{U}$ and a Lebesgue number $\epsilon$ for $\mathcal{U}$, a homomorphism \[D_* \colon H^{\epsilon}_*(X) \to H^{\mathcal{U}}_*(X).\]
%	\item[(iii)] These homomorphisms have the property that $D_* \circ I_*$ and $I_* \circ D_*$ coincide with the bonding maps of the inverse systems $\{H^{\mathcal{U}}_*(X)\}$ and $\{H^{\epsilon}_*(X)\}$ respectively.
%\end{itemize}

To sum up, the composite maps  $D_* \circ I_*$ and $I_* \circ D_*$ coincide with the bonding maps of the inverse systems $\{H^{\mathcal{U}}_*(X)\}$ and $\{H^{\epsilon}_*(X)\}$ respectively.

Now we can easily show that the limits of the systems $\{H^{\epsilon}_*(X)\}$ and $\{H^{\mathcal{U}}_*(X)\}$ are isomorphic. For if $\mathcal U(\epsilon)$ is the covering of $X$ given by all the open balls of radius $\epsilon$ and $\epsilon'$ is very small, the bonding map $H^{\epsilon'}_*(X) \to H^{\epsilon}_*(X)$ is equal to $I_* \circ D_*$ and so it factors through $H^{\mathcal U(\epsilon)}_*(X)$. Similarly, if $\epsilon$ is a Lebesgue number for an open covering $\mathcal U$ of $X$ and all the elements of a refinement $\mathcal U'$ of $\mathcal U$ have diameter smaller than $\epsilon$, the bonding map $H^{\mathcal U'}_*(X) \rightarrow H^{\mathcal U}_*(X)$ factors through $H^{\epsilon}_*(X)$. Therefore, inserting alternatively homology groups at scale $\mathcal U$ and $\epsilon$-homology groups in an appropriate fashion, we build an inverse sequence that can be interpreted as a cofinal subsystem both of $\{H^{\epsilon}_*(X)\}$ and $\{H^{\mathcal{U}}_*(X)\}$. Since the inverse limit of an inverse system and any cofinal system are isomorphic, it follows that $\varprojlim_{\epsilon} H^{\epsilon}_*(X)$ is isomorphic to $\varprojlim_{\mathcal{U}} H^{\mathcal{U}}_*(X)$, which is in turn isomorphic to $\check{H}_*(X)$.

\section{$1$--dimensional homology}\label{sec:1dim}

We devote this section to an analysis of the groups $H_1^{\mathcal{U}}(X;G)$ and $\check{H}_1(X;G)$. We will not be systematic at all, but only pursue the subject to the extent that is needed for the proofs of the results about entropy in Sections \ref{sec:manning} and \ref{sec:nmanning}.% It will also illustrate how to handle $\mathcal U$--small chains and ultimately work with \v{C}ech homology. 

\medskip

{\bf Terminology.} In the sequel we will mostly work with the homology groups $H^{\mathcal{U}}_*(X)$ and therefore with $\mathcal{U}$--small formal simplices and chains. For the sake of brevity we will drop the expression ``formal'' to refer to the simplices, chains, etc.
\medskip

First we show that $H_1^{\mathcal{U}}(X;\mathbb Z)$ is generated by special cycles, which we call \emph{simple} and \emph{elementary}, of the form

\centerline{
$(x_1 \ x_2) + (x_2 \ x_3) + \ldots + (x_{n-1} \ x_{n})$
}

\noindent where $x_{n} = x_1$ (cycle condition), all the $x_1,\ldots,x_{n-1}$ are different points in $X$ (\emph{elementary}) and it is possible to choose pairwise distinct $U_i \in \mathcal U$ so that each $\mathcal U$--small simplex $(x_i \ x_{i+1})$ is contained in $U_i$ (\emph{simple}). Intuitively, a cycle is simple if it has no ``self-intersections'' at scale $\mathcal{U}$. A simple elementary chain has an analogous definition dropping the condition that $x_1 = x_n$ and requiring that all the $x_i$ be different. Notice that an elementary chain, not necessarily simple, corresponds to an edge path in $V(\mathcal U)$ and if the chain is closed (i.e. it is a cycle) so is the path. If the chain or cycle is also simple then the edge path can be projected to an equivalent edge path $N(\mathcal U)$.

Although the ensuing discussion is very elementary in nature and the details could have been left to the reader, we include it to illustrate how to manipulate $\mathcal U$--small cycles.
The first step towards the proposed description is the following lemma:
\begin{lemma} \label{lem:reduction1} Let $c$ be a $\mathcal{U}$--small cycle. Then $c \sim \sum_i k_i e_i$ where all the $e_i$ are $\mathcal{U}$--small elementary cycles and $k_i \in G$. Moreover, every simplex that appears in some $e_i$ was already part of $c$.
\end{lemma}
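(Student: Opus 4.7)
The plan is to reduce the lemma to the classical graph-theoretic fact that the $1$-cycle module of a finite simple graph is free on a basis of fundamental cycles. Consider the orientation map $\phi\colon S_1^{\mathcal U}(X;G)\to C_1(V(\mathcal U);G)$ sending $(x\ y)\mapsto [x,y]$ when $x\neq y$ and $(x\ x)\mapsto 0$, as used implicitly in Section~\ref{sec:homology} to identify $H_1^{\mathcal U}(X)\cong H_1(V(\mathcal U))$. Its kernel is spanned by the degenerate elements $(x\ x)$ and by the pairs $(x\ y)+(y\ x)$; both types are $\mathcal U$-small cycles and in fact boundaries, since $(x\ x)=\partial(x\ x\ x)$ and $(x\ y)+(y\ x)=\partial(x\ y\ x)+(x\ x)$, both of which are $\mathcal U$-small whenever $\{x,y\}$ lies in a common element of $\mathcal U$. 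Crucially, each pair $(x\ y)+(y\ x)$ is itself an elementary cycle of length $2$.

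Let $H$ be the finite simple subgraph of $V(\mathcal U)$ spanned by the Vietoris edges $\{x,y\}$ underlying the formal simplices of $c$. The simplicial $1$-cycle $\bar c=\phi(c)$ lies in $Z_1(H;G)$; this $G$-module is free on the fundamental cycles $E_1,\dots,E_r$ associated to the edges outside a chosen spanning forest of $H$ (freeness survives the passage from $\mathbb Z$ to $G$ because $Z_1(H;\mathbb Z)$ is a direct summand of the free abelian group $C_1(H;\mathbb Z)$, the SES $0\to Z_1(H;\mathbb Z)\to C_1(H;\mathbb Z)\to B_0(H;\mathbb Z)\to 0$ being split). Each $E_i$ is a simple closed walk in $H$, and orienting it produces a $\mathcal U$-small elementary formal cycle $e_i$ whose edges are edges of $c$. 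Writing $\bar c=\sum_i k_i E_i$ and lifting, the difference $c-\sum_i k_i e_i$ lies in $\ker\phi$ and has Vietoris support contained in that of $c$. By the description of $\ker\phi$ above, it is a $G$-combination of degenerate simplices $(x\ x)$ (which are boundaries) and of elementary cycles $(x\ y)+(y\ x)$ with $\{x,y\}$ still an edge of $c$. Rearranging gives the desired decomposition $c\sim\sum_i k_i e_i+\sum_j \ell_j\bigl((x_j\ y_j)+(y_j\ x_j)\bigr)$.

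The point requiring the most care is the claim that the difference $c-\sum_i k_i e_i$ decomposes in $\ker\phi$ using only pairs $(x\ y)+(y\ x)$ whose underlying edge is already an edge of $c$; this reduces to the tautology that the Vietoris support of $c-\sum_i k_i e_i$ is contained in that of $c$, by construction of the lifts $e_i$. A minor interpretive subtlety is that ``simplex appearing in some $e_i$'' must be read as the underlying unordered $1$-simplex, since an elementary cycle has a single cyclic orientation that need not coincide with the orientation of the corresponding simplex of $c$; under this reading the moreover clause is automatic from the argument above.
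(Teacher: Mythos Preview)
Your proof is correct and follows essentially the same graph-theoretic idea as the paper: view the cycle inside a finite graph on the vertices of $c$ and use that $H_1$ of a finite graph is generated by closed edge paths. You are in fact more careful than the paper's sketch, working in the subgraph $H$ spanned by the edges actually occurring in $c$ (rather than the complete graph $K$ the paper invokes), which is what is needed for the ``moreover'' clause, and your explicit handling of the orientation kernel together with the remark that the clause must be read at the level of unordered edges is a valid point that the paper glosses over.
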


There are several alternative ways to prove the assertion. Let $\{x_i\}$ be the (finite) collection of vertices among the $\mathcal U$--small 1--simplices that constitute $c$. Then, $c$ can be seen as a (homology) cycle in the complete graph $K$ with vertices $\{x_i\}$. Since the first homology group of a finite graph is generated by closed paths and closed paths in $K$ correspond to elementary cycles in $H_1^{\mathcal{U}}(X;\mathbb Z)$, $c$ is an integer combination of elementary cycles as desired.

%  (recall that if we start with an spanning tree/forest of a finite graph, a set of generators of its first homology group can be constructed as we add the remaining edges one by one to the growing subgraphs: for every edge take a closed path composed of the selected edge and the path in the subgraph that connects its vertices)

It only remains to write an elementary cycle as a combination of simple elementary cycles. With a view towards later applications we are going to describe in detail the process when applied to an arbitrary elementary $\mathcal{U}$--small chain $e = \sum_{i=1}^{n-1} (x_i \ x_{i+1})$, not necessarily a cycle. 

\begin{lemma} \label{lem:red2} Let $e$ be a $\mathcal{U}$--small elementary chain. Then $e \sim e_0 + s_1 + \ldots + s_r$ where $e_0$ is a simple elementary chain and the $s_i$ are simple elementary cycles.
\end{lemma}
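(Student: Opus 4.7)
The plan is to proceed by strong induction on the number $n-1$ of edges of $e$. Since $e$ is an elementary chain, its vertices $x_1,\ldots,x_n$ are pairwise distinct, so the only way for $e$ to fail to be simple is the impossibility of choosing pairwise distinct covering elements $U_k\in\mathcal U$ with $(x_k\ x_{k+1})\subset U_k$. In that situation a brief pigeonhole argument --- were the sets $\mathcal U_k:=\{U\in\mathcal U:(x_k\ x_{k+1})\subset U\}$ pairwise disjoint, any selection from them would automatically be injective --- produces indices $i<j$ and some $U\in\mathcal U$ that contains both edges $(x_i\ x_{i+1})$ and $(x_j\ x_{j+1})$, so that the four points $x_i,x_{i+1},x_j,x_{j+1}$ all lie in $U$.

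Given such a triple $(i,j,U)$, I would split $e$ using the ``shortcut'' $1$--simplex $(x_{i+1}\ x_j)\subset U$. Setting
\[
e' := (x_1\ x_2)+\ldots+(x_i\ x_{i+1})+(x_{i+1}\ x_j)+(x_j\ x_{j+1})+\ldots+(x_{n-1}\ x_n)
\]
and
\[
s := (x_{i+1}\ x_{i+2})+\ldots+(x_{j-1}\ x_j)+(x_j\ x_{i+1}),
\]
a direct computation shows that $e-e'-s$ equals, up to sign, the $2$--term $1$--cycle $(x_{i+1}\ x_j)+(x_j\ x_{i+1})$, both of whose vertices lie in $U$, hence nullhomologous in $H^{\mathcal U}_1(X)$ by Proposition~\ref{prop:Ucycle}; thus $e\sim e'+s$. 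By construction $e'$ is an elementary chain strictly shorter than $e$ (its vertices form a subset of those of $e$), and $s$ is an elementary cycle also strictly shorter (its intermediate vertices $x_{i+1},\ldots,x_j$ are distinct). The degenerate case $j=i+1$ is handled in the same spirit: the shortcut then collapses two consecutive edges into the single $1$--simplex $(x_i\ x_{i+2})\subset U$, no cycle is extracted, and the resulting chain is again elementary and strictly shorter.

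The induction hypothesis decomposes $e'$ as required, leaving the cycle $s$. The cleanest way to finish is to prove the lemma jointly for elementary chains and elementary cycles: the cycle version asserts that every $\mathcal U$--small elementary cycle is homologous to a sum of simple elementary cycles (here the chain part $e_0$ is forced to be empty, since taking boundaries in $e\sim e_0+\sum s_i$ shows $\partial e_0=0$, while a simple elementary chain with all distinct vertices is not a cycle). The same shortcut/detour construction applies verbatim to an elementary cycle, again strictly decreasing the edge count. The main obstacle I anticipate is organizing this coupled recursion so that both inductions close on the common measure of edge count; once that is set up, the manipulations of chains and the appeal to Proposition~\ref{prop:Ucycle} are routine.
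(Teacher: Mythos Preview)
Your strategy is essentially the paper's: locate two edges lying in a common element of $\mathcal U$, cut and reattach using Proposition~\ref{prop:Ucycle}, and recurse on a strictly shorter chain. There is, however, one small gap. When $j=i+2$, your shortcut $(x_{i+1}\ x_j)=(x_{i+1}\ x_{i+2})$ is already an edge of $e$, so the chain $e'$ you build coincides with $e$ and the induction stalls; your separate treatment of the degenerate case covers $j=i+1$ but not $j=i+2$. The fix is immediate---since all of $x_i,x_{i+1},x_{i+2},x_{i+3}$ lie in $U$ you may collapse three consecutive edges to the single edge $(x_i\ x_{i+3})$---or, more uniformly, take the shortcut on the \emph{outer} pair of vertices, $(x_i\ x_{j+1})$, rather than the inner pair, which strictly shortens the chain part for every $j>i$.

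That outer-vertex variant is precisely what the paper does. Having fixed a choice $U_k\supset(x_k\ x_{k+1})$ for each $k$, the paper picks $i<j$ with $U_i=U_j$ and replaces the two offending edges by $(x_i\ x_{j+1})+(x_j\ x_{i+1})$; the chain summand then drops by $j-i\geq 1$ edges in every case, and the detached cycle $(x_j\ x_{i+1})+(x_{i+1}\ x_{i+2})+\ldots+(x_{j-1}\ x_j)$ inherits the labels $U_i,U_{i+1},\ldots,U_{j-1}$, so---taking $j-i$ minimal among all coincidences---it is simple on the spot. This avoids your coupled chain/cycle induction, which is correct in spirit but requires more bookkeeping to organize.
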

\begin{proof} As usual, take $e = \sum_{i=1}^{n-1} (x_i \ x_{i+1})$. For each $i$ choose an arbitrary $U_i \in \mathcal{U}$ that contains $(x_i \ x_{i+1})$, and suppose that $U_i = U_j$ for some $j > i$ (if this does not happen, then $e$ is already simple). Fix any such $i$ and choose $j > i$ to be the smallest with the property $U_i = U_j$. Consider the portion (*) of the chain between indices $i$ and $j$: \[e = \ldots + (x_{i-1} \ x_i) + \underbrace{(x_i \ x_{i+1}) + \ldots + (x_j \ x_{j+1})}_{(*)} + (x_{j+1} \ x_{j+2}) + \ldots\] We remove the simplices $(x_i \ x_{i+1})$ and $(x_j \ x_{j+1})$ from $e$ and replace them with $(x_i \ x_{j+1}) + (x_j \ x_{i+1})$ to obtain a new chain $e'$, still $\mathcal{U}$--small. Moreover, \[e - e' = (x_i \ x_{i+1}) + (x_j \ x_{j+1}) - (x_i \ x_{j+1}) - (x_j \ x_{i+1})\] is easily checked to be a cycle and it is entirely contained in $U_i = U_j$, so by Proposition \ref{prop:Ucycle} it is nullhomologous. Hence $e \sim e'$. Figures \ref{fig:pathe} and \ref{fig:pathep} provide a schematic drawing of the situation. The dotted segments connecting the vertices $x_k$ are just pictorial aids to suggest the simplices of the chains, but of course they are not true subsets of $X$. 

\begin{figure}[h]
\begin{minipage}{0.48\textwidth}
\includegraphics[scale = 0.95]{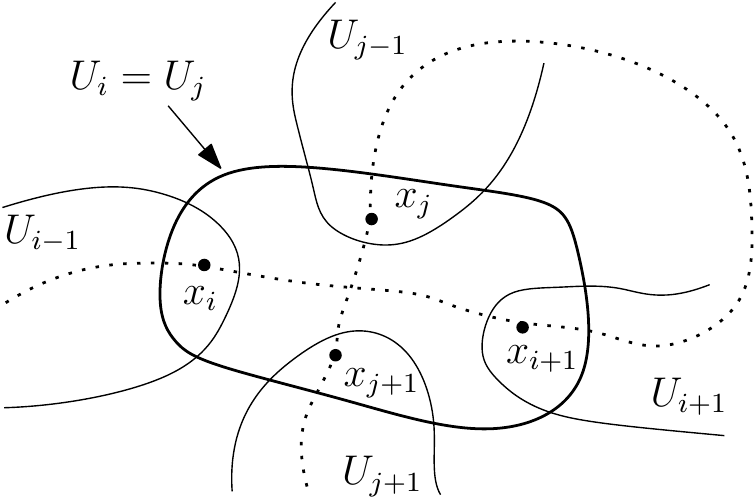}
\caption{Original chain $e$ \label{fig:pathe}}
\end{minipage}
\hfill
\begin{minipage}{0.48\textwidth}
\includegraphics[scale = 0.95]{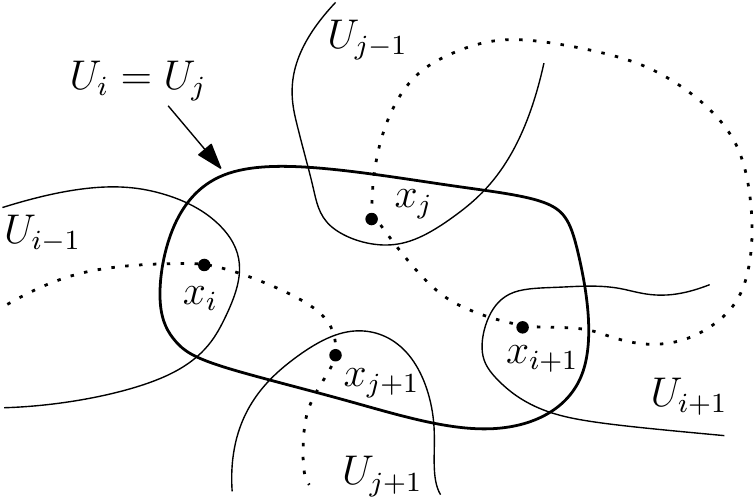}
\caption{Modified chain $e'$ \label{fig:pathep}}
\end{minipage}
\end{figure}

%\begin{figure}[h]

%\begin{pspicture}(0,0)(13,7.5)
%\psgrid(0,0)(13,7.5)
%\rput[bl](0,0.5){\scalebox{1}{\includegraphics{pinch1.eps}}}
%\rput(1,5){$U_{i-1}$} \rput(9.5,4.7){$U_{i+1}$} \rput(3.2,6.3){$U_i = U_j$}
%\rput[br](5,7){$U_{j-1}$} \rput(4.3,1){$U_{j+1}$}
%\rput[t](3.4,3.5){$x_i$} \rput[t](7.8,3.3){$x_{i+1}$} \rput[l](5.9,4.8){$x_j$} \rput[l](5.6,2.6){$x_{j+1}$}
%\end{pspicture}
%\caption{Original chain $e$ \label{fig:pathe}}
%\end{figure}

%\begin{figure}[h]

%\begin{pspicture}(0,0)(13,7.5)
%\psgrid(0,0)(13,7.5)
%\rput[bl](0,0.5){\scalebox{1}{\includegraphics{pinch2.eps}}}
%\rput(1,5){$U_{i-1}$} \rput(9.5,4.7){$U_{i+1}$} \rput(3.2,6.3){$U_i = U_j$}
%\rput[br](5,7){$U_{j-1}$} \rput(4.3,1){$U_{j+1}$}
%\rput[t](3.4,3.5){$x_i$} \rput[t](7.8,3.3){$x_{i+1}$} \rput[l](5.9,4.8){$x_j$} \rput[l](5.6,2.6){$x_{j+1}$}
%\end{pspicture}
%\caption{Modified chain $e'$ \label{fig:pathep}}
%\end{figure}

Now we can reorder the simplices in $e'$ in the following fashion: \[\begin{split} e \sim e' & = \Big( (x_1 \ x_2) + \ldots + (x_{i-1} \ x_i) + (x_i \ x_{j+1}) + (x_{j+1} \ x_{j+2}) + \ldots + (x_{n-1} \ x_n) \Big) + \\ & + \Big( (x_j \ x_{i+1}) + (x_{i+1} \ x_{i+2}) + \ldots + (x_{j-1} \ x_j) \Big) \end{split}\]

The second term is evidently a simple elementary cycle.
%The first summand is the elementary chain associated to the collection $x_1, \ldots, x_i, x_{j+1}, \ldots, x_n$. The second summand consists of those simplices represented by the ellipsis in the underbraced portion (*) of $e$ together with the single simplex $(x_j \ x_{i+1})$. It is easy to see that it is a simple elementary cycle. 
Now we repeat this process on the first summand of $e'$, and so on until we reach an expression having the form stated in the lemma.
%, and so on. Eventually this will lead to an expression $e \sim e_0 + s_1 + \ldots + s_r$ where $e_0$ is an elementary simple chain (possibly zero if the initial $e$ was a cycle) and each $s_i$ is a simple cycle.
\end{proof}

Notice that if the starting chain $e$ is in fact a cycle then $0 = \partial e = \partial e_{0} + \sum_i \partial s_i = \partial e_0$, and so the $e_0$ summand is also a cycle. Thus, a combination of Lemmas \ref{lem:reduction1} and \ref{lem:red2} yields the following:

\begin{proposition} \label{prop:scc} The $\mathcal{U}$--small elementary simple cycles generate $H_1^{\mathcal{U}}(X;\mathbb Z)$ and, consequently, $H_1^{\mathcal{U}}(X;G)$ for general coefficients $G$.
\end{proposition}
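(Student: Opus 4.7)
The plan is simply to chain together Lemmas \ref{lem:reduction1} and \ref{lem:red2}, as already suggested in the paragraph preceding the statement. Since the bulk of the technical work has been dispatched in those lemmas, the proof amounts to linking the two reductions and checking that the leftover simple elementary chain is forced by the cycle condition to itself be a cycle. I would first treat integer coefficients and then argue that the passage to a general coefficient module $G$ is automatic.

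First I would start with an arbitrary $\mathcal{U}$--small $1$--cycle $c$ with integer coefficients. Lemma \ref{lem:reduction1} produces a homology $c \sim \sum_i k_i e_i$ in which each $e_i$ is a $\mathcal{U}$--small elementary cycle. Next I would apply Lemma \ref{lem:red2} to each $e_i$ separately, writing $e_i \sim e_{i,0} + s_{i,1} + \cdots + s_{i,r_i}$, where the $s_{i,j}$ are simple elementary cycles and $e_{i,0}$ is a simple elementary chain. The crucial observation, already recorded just above the statement, is that $\partial e_i = 0$ and each $\partial s_{i,j} = 0$ force $\partial e_{i,0} = 0$; since $e_{i,0}$ is simple and elementary and now also closed, it is a simple elementary cycle. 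Substituting these expressions into $\sum_i k_i e_i$ exhibits $c$ as an integer combination of simple elementary cycles, which is what the proposition asks for.

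For the passage to a general coefficient module $G$, I would observe that none of the ingredients used above is sensitive to the coefficient ring: the decomposition argument of Lemma \ref{lem:reduction1} (closed walks in the complete graph on the support of $c$), the pinching move of Lemma \ref{lem:red2}, and the coning argument in Proposition \ref{prop:Ucycle} are all $R$--linear manipulations of chains. Hence exactly the same sequence of homologies applies to any $\mathcal{U}$--small $1$--cycle with coefficients in $G$, producing a $G$--linear combination of simple elementary cycles.

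I do not foresee any genuine obstacle here, as the preparatory lemmas do all the combinatorial work. The only point requiring a little care is the bookkeeping at each application of Lemma \ref{lem:red2}: one must confirm that the $\mathcal{U}$--smallness is preserved by the pinching operation and that rearrangement of summands does not break the elementary or simple conditions of the pieces extracted. Both properties are built into the statement of Lemma \ref{lem:red2}, so no extra verification is needed.
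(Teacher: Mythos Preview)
Your proposal is correct and follows exactly the route the paper takes: apply Lemma~\ref{lem:reduction1} to reduce to elementary cycles, then Lemma~\ref{lem:red2} to split off simple elementary cycles, and observe that the residual simple elementary chain $e_{i,0}$ inherits $\partial e_{i,0}=0$ from the cycle condition. Your treatment of general coefficients $G$ is slightly more explicit than the paper's, but the underlying reason is the same.
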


Now we discuss the relation between $H_1^{\mathcal{U}}(X;G)$ and $\check{H}_1(X;G)$ when $X$ is locally connected. \label{pg:refine} It was mentioned earlier on that $H_q^{\mathcal{U}}(X;G)$ essentially ignores all the structure of $X$ below scale $\mathcal{U}$. Suppose that each member $U$ of the covering $\mathcal{U}$ is connected, so that at this scale $X$ has no relevant $0$--dimensional features (these being related to connectedness). It is then natural expect that $H_1^{\mathcal{U}}(X;G)$ is ```smaller'' than $\check{H}_1(X;G)$. Indeed, we shall prove that the projection $\check{H}_1(X;G) \to H_1^{\mathcal{U}}(X;G)$ is surjective.
%This is not generally the case, but we shall prove that every element in $H_1^{\mathcal{U}}(X;G)$ comes from an element in $\check{H}_1(X;G)$.
In other words, as soon as a $1$--cycle is $\mathcal{U}$--small, it can be refined to be $\mathcal{V}$--small for an arbitrary open covering $\mathcal{V}$ in a coherent manner so as to define an element in $\check{H}_1(X;G)$.

Consider a $\mathcal{U}$--small $1$--chain $c = \sum_i k_i \sigma_i$. Each $\sigma_i = (x_i \ y_i)$ is contained in some $U_i \in \mathcal{U}$. Given any open covering $\mathcal V$ of $X$, $\mathcal V|_{U_i} = \{ V \cap U_i: V \in \mathcal V\}$ is an open covering of $U_i$ and since $U_i$ is connected, a standard argument produces an elementary $\mathcal V$--small chain $e_i$ in $U_i$ such that $\partial e_i = y_i - x_i$. Consider the $\mathcal{V}$--small chain $c' := \sum_i k_i e_i$ obtained from $c$ by replacing each $1$--simplex $\sigma_i$ with the chain $e_i$ that connects $x_i$ to $y_i$. We say that $c'$ is a \emph{refinement} of $c$. Observe that $c \sim c'$ in $H_1^{\mathcal{U}}(X;G)$. Indeed, since $\partial e_i = y_i - x_i = \partial \sigma_i$ we have that $e_i - \sigma_i$ is a cycle contained in $U_i$ and so by Proposition \ref{prop:Ucycle} it is nullhomologous. Hence $c' - c = \sum_i k_i (e_i - \sigma_i) \sim 0$ in $H_1^{\mathcal{U}}(X;G)$.

When $X$ is locally connected, open coverings that consist of connected sets constitute a cofinal subfamily of the family of all open coverings. Together with the construction from the previous paragraph, this entails the following:

\begin{proposition} Suppose $X$ is locally connected and there is a cofinal sequence of open coverings (this is the case if $X$ is compact and metric, for example). Assume $\mathcal{U}$ is an open covering of $X$ that consists of connected sets. Then any element of $H_1^{\mathcal{U}}(X;G)$ comes from an element of $\check{H}_1(X;G)$ or, in other words, the projection $\check{H}_1(X;G) \to H_1^{\mathcal U}(X;G)$ is surjective.
\end{proposition}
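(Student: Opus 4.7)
My plan is to lift a given $\mathcal{U}$-small $1$-cycle $c$ to a coherent family $(\gamma^{\mathcal{V}})_{\mathcal{V}}$ of classes in the various $H_1^{\mathcal{V}}(X;G)$, thereby producing an element of $\check{H}_1(X;G) = \varprojlim_{\mathcal{V}} H_1^{\mathcal{V}}(X;G)$ whose $\mathcal{U}$-component is $[c]$. The key tool is the refinement construction sketched in the paragraph immediately preceding the proposition; the proof amounts to iterating it along a cofinal sequence of coverings and checking the resulting compatibility.

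Using local connectedness of $X$ together with the existence of a cofinal sequence of coverings, I fix a cofinal sequence $\mathcal{U} = \mathcal{U}_0, \mathcal{U}_1, \mathcal{U}_2, \ldots$ in which each $\mathcal{U}_{n+1}$ refines $\mathcal{U}_n$ and every member of every $\mathcal{U}_n$ is connected. Set $c_0 := c$, and, having chosen a $\mathcal{U}_n$-small cycle $c_n$, take $c_{n+1}$ to be any $\mathcal{U}_{n+1}$-refinement of $c_n$ in the sense of the paragraph above the proposition. From its construction $c_{n+1} - c_n$ decomposes as a sum of pieces $k_i(e_i - \sigma_i)$, each of which is a cycle contained in a single member of $\mathcal{U}_n$; by Proposition~\ref{prop:Ucycle} each such cycle bounds a $\mathcal{U}_n$-small $2$-chain, so $c_{n+1} - c_n = \partial b_n$ for some $\mathcal{U}_n$-small $2$-chain $b_n$.

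For an arbitrary open covering $\mathcal{V}$, I choose $N$ large enough that $\mathcal{U}_N$ refines $\mathcal{V}$ and declare $\gamma^{\mathcal{V}}$ to be the class of $c_N$ in $H_1^{\mathcal{V}}(X;G)$. To see that this is well-defined and compatible with all bonding maps, note that for every $k \geq N$ the chain $b_k$ is $\mathcal{U}_k$-small and hence $\mathcal{V}$-small since $\mathcal{U}_k$ refines $\mathcal{U}_N$ refines $\mathcal{V}$; telescoping, the classes of all $c_n$ with $n \geq N$ agree in $H_1^{\mathcal{V}}(X;G)$, and the analogous argument applied to a pair $\mathcal{W}$ refining $\mathcal{V}$ shows that the projection $H_1^{\mathcal{W}}(X;G) \to H_1^{\mathcal{V}}(X;G)$ sends $\gamma^{\mathcal{W}}$ to $\gamma^{\mathcal{V}}$. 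The resulting element of $\check{H}_1(X;G)$ visibly projects to $[c_0] = [c]$.

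I do not expect genuine obstacles. The substantive content has already been carried out in the refinement construction preceding the statement; what remains is bookkeeping that uses only cofinality of the chosen sequence to upgrade one refinement at a time into a thread through the full inverse limit. The only point that requires any care is the inductive choice of refinements, which must be made once and for all so that the telescoping identities $c_{n+1} - c_n = \partial b_n$ can be chained together—exactly what the cofinal sequence hypothesis is there to allow.
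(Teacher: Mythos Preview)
Your proposal is correct and follows precisely the approach the paper intends: the paper merely states that local connectedness (giving a cofinal family of coverings by connected sets) ``together with the construction from the previous paragraph'' entails the proposition, and you have supplied exactly the inverse-limit bookkeeping that turns a single refinement step into a coherent thread $(\gamma^{\mathcal V})_{\mathcal V}$. The only thing you add beyond the paper is the explicit telescoping argument, which is the natural way to make the paper's sketch rigorous.
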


Suppose for the next definition and lemma that $G = \mathbb Z, \mathbb Q, \mathbb C$. The norm of a $\mathcal U$--small 1-cycle $c = \sum k_i \sigma_i$ is defined as $||c||_1 = \sum |k_i|$. For later purposes we show how to modify the refinement construction just described to control the norm of the refinements:

\begin{lemma} \label{lem:count_refine} Let $\mathcal{U}$ be an open covering that consists of connected sets and consider a $\mathcal{U}$--small chain $c$. Then for any open covering $\mathcal{V}$ there exists a $\mathcal{V}$--small chain $c'$ such that: (i) $c \sim c'$ in $H_1^{\mathcal{U}}(X;G)$ and (ii) ${\|c'\|}_1 \leq {\|c\|}_1 |\mathcal{V}|$, where $|\mathcal{V}|$ denotes the cardinality of $\mathcal{V}$.
\end{lemma}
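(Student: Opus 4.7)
The plan is to adapt the refinement construction described just before the lemma, but choose the replacement chain $e_i$ inside each $U_i$ to be \emph{short} (at most $|\mathcal{V}|$ summands). Part (ii) will then follow by summing, while (i) will be inherited from the same argument as in the unweighted case via Proposition \ref{prop:Ucycle}.

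The key geometric input is: since $U_i$ is connected, the \emph{intersection graph} built on the family $\mathcal{V}|_{U_i} = \{V\in\mathcal{V} : V\cap U_i\neq\emptyset\}$, with an edge $\{V,V'\}$ whenever $V\cap V'\cap U_i\neq\emptyset$, is connected. This is a standard chaining argument: if it had two components $A,B$, then $\bigcup_{V\in A}(V\cap U_i)$ and $\bigcup_{V\in B}(V\cap U_i)$ would be disjoint nonempty open sets covering $U_i$, contradicting connectedness.

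Given this, for each simplex $\sigma_i=(x_i\ y_i)$ of $c$ and a chosen $U_i\in\mathcal{U}$ containing it, I pick $V_0,V_m\in\mathcal{V}|_{U_i}$ with $x_i\in V_0$, $y_i\in V_m$, and a \emph{shortest} path $V_0,V_1,\ldots,V_m$ between them in the intersection graph. Being simple it visits $m+1$ distinct vertices of $\mathcal{V}|_{U_i}\subseteq\mathcal{V}$, so $m+1\le|\mathcal{V}|$. Picking witnesses $z_j\in V_j\cap V_{j+1}\cap U_i$ for $0\le j\le m-1$ and setting $z_{-1}:=x_i$, $z_m:=y_i$, I define
\[
e_i \;:=\; \sum_{j=0}^{m}(z_{j-1}\ z_j).
\]
Each summand has both endpoints in $V_j\in\mathcal{V}$, so $e_i$ is $\mathcal{V}$-small; it has $m+1\le|\mathcal{V}|$ summands; and $\partial e_i=z_m-z_{-1}=y_i-x_i=\partial\sigma_i$.

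Finally I set $c':=\sum_i k_i e_i$. It is $\mathcal{V}$-small, and
\[
\|c'\|_1 \;\le\; \sum_i |k_i|(m_i+1) \;\le\; |\mathcal{V}|\sum_i|k_i| \;=\; |\mathcal{V}|\,\|c\|_1,
\]
giving (ii). For (i), each $e_i-\sigma_i$ is a $1$-cycle whose every simplex lies in the single element $U_i\in\mathcal{U}$, hence nullhomologous in $H_1^{\mathcal{U}}(X;G)$ by Proposition \ref{prop:Ucycle}; summing with coefficients $k_i$ yields $c\sim c'$. The only step that is not purely bookkeeping is the connectedness of the intersection graph, handled above; I do not anticipate any further obstacle.
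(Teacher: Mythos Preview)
Your proof is correct. Both your argument and the paper's reach the same bound $\|e_i\|_1\le|\mathcal{V}|$ for the replacement chain inside each $U_i$, and then finish identically via Proposition~\ref{prop:Ucycle}. The difference is in how that short $e_i$ is produced. The paper first takes an \emph{arbitrary} $\mathcal{V}$--small elementary chain $e_i$ joining $x_i$ to $y_i$ (using connectedness of $U_i$), and then invokes Lemma~\ref{lem:red2} to decompose it as a simple elementary chain $e_{i0}$ plus simple elementary cycles; the cycles are killed by Proposition~\ref{prop:Ucycle} and the simplicity of $e_{i0}$ gives $\|e_{i0}\|_1\le|\mathcal{V}|$. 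You instead build a short $e_i$ directly, by taking a shortest path in the intersection graph of $\mathcal{V}|_{U_i}$; the connectedness of this graph is exactly the ``standard chaining argument'' underlying the paper's unbounded refinement, and the shortest-path condition immediately caps the length by $|\mathcal{V}|$. Your route is slightly more self-contained (it does not appeal to Lemma~\ref{lem:red2}); the paper's route illustrates how that structural lemma does the bookkeeping for you. Either way the argument is sound.
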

\begin{proof} Start by constructing $c' = \sum_i k_i e_i$ as before. By Lemma \ref{lem:red2} applied in $U_i$ at scale $\mathcal{V}$ we may write $e_i \sim e_{i0} + s_{i1} + \ldots + s_{ir_i}$ in $H_1^{\mathcal{V}}(U_i;G)$ where each $e_{0i}$ is an elementary simple chain and each $s_{ij}$ is a cycle, and all these are $\mathcal{V}$--small and contained in $U_i$. By Proposition \ref{prop:Ucycle} all the $s_{ij}$ are homologous to zero in $H_1^{\mathcal{U}}(X;G)$, so we have $e_i \sim e_{i0}$ at scale $\mathcal{U}$ for each $i$. Moreover, $\|e_{i0}\|_1 \leq |\mathcal{V}|$ because $e_{i0}$ is an elementary simple chain. Therefore in $H_1^{\mathcal{U}}(X;G)$ we have $c \sim c' := \sum k_i e_{i0}$ and $\|c'\|_1 \leq \sum |k_i| \|e_{i0}\|_1 \leq \sum |k_i| |\mathcal{V}| = \|c\|_1 |\mathcal{V}|$.
\end{proof}

\section{Alexander-Spanier cocycles with a finite image}\label{sec:imagenfinita}

Let $f \colon X \to Y$ be a map, not necessarily continuous, and suppose that $\mathcal{V}$ is an open covering of $Y$ and $\mathcal{U}$ is an open covering of $X$ that refines $f^{-1}(\mathcal{V})$. The usual formula $f_{\sharp}(\xi) := \xi \circ f$ correctly defines a homomorphism $\overline{C}^q_{\mathcal{V}}(Y) \to \overline{C}^q_{\mathcal{U}}(X)$ and, similarly, the prescription \[f_{\sharp}(x_0 \ \ldots \ x_q) := (f(x_0) \ \ldots \ f(x_q))\] for any $\mathcal{U}$--small $q$--simplex extends to a homomorphism $f_{\sharp} \colon S^{\mathcal{U}}_q(X) \to S^{\mathcal{V}}_q(Y)$. These commute with the coboundary and boundary operators, respectively, and therefore induce homomorphisms $f_* \colon H_q^{\mathcal{U}}(X) \to H_q^{\mathcal{V}}(Y)$ and $f^* \colon H^q_{\mathcal{V}}(Y) \to H^q_{\mathcal{U}}(X)$.

%Note that the hypothesis that $\mathcal U$ refines $f^{-1}(\mathcal V)$ is the minimum requirement for the maps $f_* \colon H_q^{\mathcal U}(X) \to H_q^{\mathcal V}(Y)$ and $f^* \colon H^q_{\mathcal V}(Y) \to H^q_{\mathcal U}(X)$ be well defined.
The following proposition estimates how close must two maps $f, g \colon X \to Y$ be in order to induce the same map in homology and cohomology for a fixed choice of open coverings. The condition is a direct translation of the notion of contiguity in the Vietoris complex:

\begin{proposition}\label{1_proposicion_f=g}
Suppose that for every $\mathcal{U}$--small simplex $(x_0 \ \ldots \ x_q)$ in $X$ there exists $V \in \mathcal V$ which contains both simplices $(f(x_0) \ \ldots \ f(x_q))$ and $(g(x_0) \ \ldots \ g(x_q))$. Then $f_* = g_* \colon H_q^{\mathcal{U}}(X) \to H_q^{\mathcal{V}}(Y)$ and similarly for cohomology.
\end{proposition}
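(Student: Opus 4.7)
The plan is to mimic the classical chain homotopy argument between contiguous simplicial maps, adapted to the formal simplex framework. For each $\mathcal{U}$--small $q$--simplex $\sigma=(x_0\ \ldots\ x_q)$ define the ``prism''
\[P(\sigma) \;=\; \sum_{j=0}^{q} (-1)^j \bigl(f(x_0)\ \ldots\ f(x_j)\ g(x_j)\ \ldots\ g(x_q)\bigr),\]
and extend linearly to $P\colon S_q^{\mathcal{U}}(X) \to S_{q+1}^{\mathcal{V}}(Y)$. The first key point is that each summand is in fact $\mathcal{V}$--small: by the hypothesis there is some $V\in\mathcal{V}$ containing both $(f(x_0)\ \ldots\ f(x_q))$ and $(g(x_0)\ \ldots\ g(x_q))$, hence containing every intermediate tuple $(f(x_0)\ \ldots\ f(x_j)\ g(x_j)\ \ldots\ g(x_q))$. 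So $P$ is well-defined on the modules of $\mathcal{U}$-- and $\mathcal{V}$--small chains.

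The next step is the standard telescoping verification that $P$ is a chain homotopy between $f_{\sharp}$ and $g_{\sharp}$, i.e.
\[\partial P(\sigma) + P(\partial\sigma) \;=\; g_{\sharp}(\sigma) - f_{\sharp}(\sigma).\]
This is a purely formal computation: expand $\partial P(\sigma)$ by the boundary formula, expand $P(\partial\sigma)$ using the definition of $P$ on each face of $\sigma$, and observe that all ``mixed'' terms cancel in pairs while the diagonal terms collapse to the telescoping sum that produces $(g(x_0)\ \ldots\ g(x_q))-(f(x_0)\ \ldots\ f(x_q))$. Once the identity is in hand, $f_*=g_*$ on $H_q^{\mathcal U}(X)\to H_q^{\mathcal V}(Y)$ is immediate: if $c$ is a $\mathcal{U}$--small cycle, $g_{\sharp}(c)-f_{\sharp}(c)=\partial P(c)$ is a $\mathcal{V}$--small boundary.

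For the cohomology statement the plan is to dualize: for $\xi\in C^{q+1}(Y)$ set
\[(P^{\sharp}\xi)(x_0,\ldots,x_q) \;=\; \sum_{j=0}^{q}(-1)^j\,\xi\bigl(f(x_0),\ldots,f(x_j),g(x_j),\ldots,g(x_q)\bigr).\]
The transpose of the chain identity gives $\delta P^{\sharp} + P^{\sharp}\delta = g^{\sharp}-f^{\sharp}$ at the cochain level. The step that needs a sentence of care, and which I expect to be the main (minor) obstacle, is checking that $P^{\sharp}$ descends to the quotients $\overline{C}^{q+1}_{\mathcal V}(Y)\to \overline{C}^q_{\mathcal U}(X)$: if $\xi$ is $\mathcal{V}$--locally zero and $(x_0,\ldots,x_q)$ is contained in some $U\in\mathcal U$, then the hypothesis (applied to the $\mathcal{U}$--small simplex $(x_0\ \ldots\ x_q)$) gives a single $V\in\mathcal V$ containing every tuple $(f(x_0),\ldots,f(x_j),g(x_j),\ldots,g(x_q))$, so each term of $(P^{\sharp}\xi)(x_0,\ldots,x_q)$ vanishes; hence $P^{\sharp}\xi$ is $\mathcal{U}$--locally zero. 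The cohomology conclusion $f^*=g^*$ follows from the descended identity exactly as in the homology case.
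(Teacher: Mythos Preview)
Your proof is correct and follows essentially the same approach as the paper: the paper defines the identical prism operator $D(x_0\ \ldots\ x_q)=\sum_{i=0}^{q}(-1)^i(f(x_0)\ \ldots\ f(x_i)\ g(x_i)\ \ldots\ g(x_q))$, observes it lands in $\mathcal{V}$--small chains by the hypothesis, and invokes $\partial D + D\partial = g_\sharp - f_\sharp$. For cohomology the paper simply says ``the argument is analogous,'' so your explicit verification that $P^{\sharp}$ descends to the $\mathcal{U}$-- and $\mathcal{V}$--locally zero quotients is in fact more detailed than what the paper provides.
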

\begin{proof} Observe that the assumption implies that $f$ and $g$ carry $\mathcal{U}$--small simplices to $\mathcal{V}$--small simplices, so $f_*$ and $g_*$ are well defined. For every $\mathcal{U}$--small simplex $(x_0\ \ldots \ x_q)$ in $X$ define $$D (x_0 \ \ldots \ x_q) :=\sum_{i=0}^{q}(-1)^i (f(x_{0}) \ \ldots \ f(x_{i}) \ g(x_{i}) \ \ldots \ g(x_{q})).$$ This is a $(q+1)$--chain in $Y$ which is $\mathcal{V}$--small because of the hypothesis of the proposition. Extending $D$ linearly to $D \colon S_q^{\mathcal{U}}(X) \rightarrow S_{q+1}^{\mathcal{V}}(Y)$ we obtain a prism operator which is easily checked to satisfy $\partial D + D \partial = g_{\sharp} - f_{\sharp}$. Thus $D$ is a chain homotopy between $g_{\sharp}$ and $f_{\sharp}$ and so the two induce the same homomorphism $f_* = g_* \colon H_q^{\mathcal{U}}(X) \to H_q^{\mathcal{U}}(Y)$. The argument for cohomology is analogous.
\end{proof}

Now we describe a construction which provides some sort of ``sampling'' of a space $X$. It will also justify why it is interesting to consider maps that are not necessarily continuous. We first assume that $X$ is paracompact, although we will subsequently specialize to the case when $X$ is compact. Let $\mathcal{V}$ be an open covering of $X$ and let $\mathcal{U}$ be a star refinement of $\mathcal{V}$. This means that (*) for every $U \in \mathcal{U}$ there exists $V \in \mathcal{V}$ that contains every element of $\mathcal U$ that intersects $V$, including $U$ itself. The existence of such a refinement $\mathcal U$ can be easily established when $X$ is compact metric using a Lebesgue number for $\mathcal V$, but in general it is equivalent to the assumption that $X$ be paracompact. Assume that every member of $\mathcal{U}$ is nonempty.

Write $\mathcal{U} = \{U_i : i \in I\}$ and endow the index set $I$ with a well-ordering. For each $i$ pick a point $u_i \in U_i$ and consider the map 
\begin{align*}
p:X&\to X\\
  x&\longmapsto u_{i} \ \ \mbox{$i \in I$ smallest index with $x\in U_{i}$.}
\end{align*}
The identity map on $X$, $\mathrm{id}_X$, and $p$ verify the hypothesis of Proposition \ref{1_proposicion_f=g}. We conclude that they define the same map $H_q^{\mathcal{U}}(X) \to H_ q^{\mathcal{V}}(X)$ (and similarly in cohomology). That is, we have
%\textcolor{red}{LINEAS PRESCINDIBLES}
%Let $(x_0\ \ldots \ x_q)$ be contained in $U \in \mathcal{U}$ and write $p(x_k) = u_{i(k)}$ for each $k$. By the definition of $p$ we have $x_k \in U_{i(k)}$, so $U_{i(k)} \cap U \neq \emptyset$ for each $k$. Thus by (*) there exists $V \in \mathcal{V}$ that contains all the $u_{i(k)}$ and in particular all the $u_{i(k)} = p(x_k)$ and also all the $x_k$. In other words, it contains both $(x_0 \ \ldots \ x_q)$ and $(p(x_0) \ \ldots \ p(x_q))$ or, in the language of simplicial complexes, the maps induced by $p$ and $\mathrm{Id}_X$ from $V(\mathcal U)$ to $V(\mathcal V)$ are contiguous. By the previous proposition $p$ and $\mathrm{Id}_X$ define the same map $H_q^{\mathcal{U}}(X) \to H_ q^{\mathcal{V}}(X)$ (and similarly in cohomology).
%Now, the map induced by the identity is precisely the bonding map of the inverse (direct) system that defines $\check{H}_q(X)$ (respectively $\check{H}^q(X)$).
%That is, we have
\[p_* = j_{\mathcal{U},\mathcal{V}} \colon H^{\mathcal{U}}_q(X) \to H^{\mathcal{V}}_q(X) \quad \text{ and } \quad p^{*} = \pi_{\mathcal{V},\mathcal{U}} :H_{\mathcal{V}}^{q}(X)\to H_{\mathcal{U}}^{q}(X).\]

From now on we concentrate on the case when $X$ is compact. Then one can always replace $\mathcal{U}$ with a finite subcover and this is still a star refinement of $\mathcal{V}$. Let $X_0 = \{u_i : i \in I\}$. This is a ``finite sample'' of $X$ \label{pag:samples}. Evidently the image of $p$ is contained in $X_0$, so we can write $p = i \circ p_0$ where $p_0$ is the map $p$ whose target space is restricted to $X_0$ and $i \colon X_0 \subseteq X$ denotes the inclusion. Hence we can factor $p_*$ and $p^*$ through $H_*^{\mathcal{V}}(X_0)$ and $H^*_{\mathcal{V}}(X_0)$, respectively, to obtain the following commutative diagrams: \[\xymatrix{H_q^{\mathcal{U}}(X) \ar[r]^{j_{\mathcal{U},\mathcal{V}}} \ar[rd]_{(p_0)_*} & H_q^{\mathcal{V}}(X) \\  & H_q^{\mathcal{V}}(X_0) \ar[u]^{i_*} } \quad \quad \text{ and } \quad  \quad \xymatrix{H^q_{\mathcal{U}}(X) & H^q_{\mathcal{V}}(X) \ar[l]_{\pi_{\mathcal{V},\mathcal{U}}} \ar[d]_{i^*} \\ & H^q_{\mathcal{V}}(X_0) \ar[ul]^{(p_0)^*} }\] The useful feature of these diagrams is that the homology and cohomology groups of $X_0$ are entirely finitistic in nature since $X_0$ is finite. The number of $q$--simplices in $X_0$ is finite, with a very crude bound being $|X_0|^{q+1}$. This places an upper bound on the rank of $H_q^{\mathcal{V}}(X_0)$. Similarly, any Alexander--Spanier cochain defined on $X_0$ necessarily has a finite image with no more than $|X_0|^{q+1}$ elements. Thus we have the following proposition, originally due to Keesee \cite{keesee}.

\begin{proposition}\label{prop:imagenfinita}
Let $X$ be compact. Then, every element of $\check{H}^*(X)$ has a representative cocycle whose image is finite.
\end{proposition}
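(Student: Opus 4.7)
My plan is to push an arbitrary class through the finite-sampling diagram constructed just before the statement; the pulled-back cochain will automatically have finite image because it factors through the finite set $X_0$. Concretely, given $\alpha \in \check H^q(X)$, Proposition \ref{prop:limUcohomologia} lets us represent it by some $[\xi_{\mathcal V}] \in H^q_{\mathcal V}(X)$ for an open covering $\mathcal V$. Using compactness of $X$, I would choose a \emph{finite} star refinement $\mathcal U$ of $\mathcal V$ with every member nonempty, and form the finite sample $X_0 = \{u_i\}$ and the (discontinuous) map $p_0 \colon X \to X_0$ exactly as described above.

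The candidate representative is the cochain
\[
  \eta(x_0,\ldots,x_q) \;:=\; \xi_{\mathcal V}\bigl(p_0(x_0),\ldots,p_0(x_q)\bigr),
\]
which takes at most $|X_0|^{q+1}$ distinct values and so has finite image. Two points must be checked: (i) $\delta \eta$ is locally zero, so that $\eta$ defines a class in Alexander--Spanier cohomology; and (ii) that class equals $\alpha$ in $\check H^q(X)$. For (i), let $(x_0,\ldots,x_{q+1})$ lie in some $U \in \mathcal U$; the star-refinement property produces $V \in \mathcal V$ containing every element of $\mathcal U$ that meets $U$. Each $p_0(x_i) = u_{j_i}$ lies in $U_{j_i}$, and since $x_i \in U \cap U_{j_i}$ we get $U_{j_i} \subseteq V$, so $u_{j_i} \in V$. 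Hence the tuple $(p_0(x_0),\ldots,p_0(x_{q+1}))$ lies in the single set $V \in \mathcal V$ and
\[
 \delta \eta(x_0,\ldots,x_{q+1}) \;=\; \delta \xi_{\mathcal V}\bigl(p_0(x_0),\ldots,p_0(x_{q+1})\bigr) \;=\; 0,
\]
because $\delta \xi_{\mathcal V}$ is $\mathcal V$-locally zero. This shows $\delta \eta$ is $\mathcal U$-locally zero and in particular locally zero.

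Point (ii) is precisely the right-hand commutative diagram displayed just above the statement: at scale $\mathcal U$ the class of $\eta$ is, by construction, $(p_0)^\ast (i^\ast [\xi_{\mathcal V}]) = \pi_{\mathcal V,\mathcal U}[\xi_{\mathcal V}]$, and passing to the direct limit yields $[\eta] = \alpha$ in $\check H^q(X)$. The only substantive input is the star-refinement condition in step (i); everything else is a direct application of the apparatus already assembled. The subtlety one might worry about is that $p_0$ is discontinuous, so $\delta \eta$ need not vanish pointwise anywhere, but the Alexander--Spanier formalism only demands vanishing modulo a locally zero cochain, and the star-refinement condition delivers exactly this.
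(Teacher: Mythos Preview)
Your proof is correct and follows essentially the same approach as the paper: both pull back a representative cocycle $\xi_{\mathcal V}$ through the sampling map $p_0$ to obtain a cocycle $(p_0)^\sharp(i^\sharp \xi_{\mathcal V})$ with finite image, and both appeal to the commutative diagram $\pi_{\mathcal V,\mathcal U} = (p_0)^\ast \circ i^\ast$ to identify the resulting class with the original one. The only cosmetic difference is that you verify the cocycle condition for $\eta$ by hand via the star-refinement property, whereas the paper absorbs this step into the earlier discussion establishing $p^\ast = \pi_{\mathcal V,\mathcal U}$.
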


\begin{proof}
Let $z \in \check{H}^q(X)$. Recall from Proposition \ref{prop:limUcohomologia} that the \v{C}ech cohomology group is the direct limit of the $\mathcal V$--cohomology groups, $H_{\mathcal V}^q(X)$, where $\mathcal V$ ranges over all open coverings of $X$. By compactness, we can restrict the limit to finite open coverings. Thus, there exists a finite open covering $\mathcal V$ of $X$ and $z_{\mathcal{V}}\in H_{\mathcal{V}}^{q}(X)$ such that 
\begin{align*}
H_{\mathcal{V}}^{q}(X)&\stackrel{\pi_{\mathcal{V}}}{\to}\check{H}^{q}(X)\\
    z_{\mathcal{V}}&\longmapsto z
\end{align*}

Let $z_{\mathcal{U}} := \pi_{\mathcal{V},\mathcal{U}}(z_{\mathcal{V}}) \in H^q_{\mathcal{U}}(X)$, which in the limit also represents the element $z$ since $\pi_{\mathcal{U}} \circ \pi_{\mathcal{V},\mathcal{U}} = \pi_{\mathcal{V}}$. By the commutative diagram above we then have $z_{\mathcal{U}} = (p_0)^*  i^*(z_{\mathcal{V}})$, and evidently $i^*(z_{\mathcal{V}})$ is represented by some cocycle $\xi$ with a finite image. Thus $z_{\mathcal{U}}$ and also $z$ are represented by the cocycle $(p_0)^{\sharp} \xi$, which also has a finite image.
\end{proof}

\begin{remark}\label{rmk:cohomologiafingen}
The previous arguments trivially show that $H^*_{\mathcal U}(X)$ is finitely generated because every open covering $\mathcal U$ is a star refinement of the trivial open covering $\{X\}$.
\end{remark}

%============================================================
\section{Definition of the integral}\label{sec:integral}
%============================================================

In the usual definition of cohomology as the homology of a dual chain complex there is an obvious manner in which cohomology acts on homology: by evaluation of cochains on chains. The descriptions of \v{C}ech homology and cohomology considered in this paper are not so obviously dual to each other; however, it is possible to define a pairing between cohomology and homology classes in terms of an integral reminiscent of de Rham's theory of integration of differential forms over simplices. We first devote a few lines to a heuristic motivation of the integral.

\subsection{Motivation} Consider a space $X$. Borrowing the language of differential geometry just for heuristic purposes, we can think of a $q$--cochain $\xi$ as a $q$--differential form and of a (formal) $q$--simplex $\sigma = (x_0 \ \ldots\ x_q)$ as a base point $x_0$ together with $q$ ``vectors'' $\vv{x_0x_i}$ that are ``approximately tangent'' to $X$ at $x_0$. Then the evaluation $\xi(\sigma) = \xi(x_0, \ldots, x_q)$ can be thought of as the evaluation of the form $\xi_{x_0}$ over the tuple of $q$ tangent vectors $(\vv{x_0x_1},\ldots,\vv{x_0x_q})$. This analogy suggests how to define the integral of a cohomology class $z \in \check{H}^q(X)$ over a homology class $\gamma \in \check{H}_q(X)$: one takes a cocycle $\xi$ that represents $z$ and an approximation $c = \sum k_i \sigma_i$ of $\gamma$, computes the ``Riemann sum'' $\sum k_i \xi(\sigma_i)$ and then lets the size of the simplices of the approximation go to zero. There is no \emph{a priori} reason why the limit should exist; however, the cocycle condition $\delta \xi = 0$ ensures that it does.

To explain this in more detail let us consider a particularly simple situation. Suppose that $\gamma \colon [0,1] \to X$ is a continuous closed path in $X$ and $\xi$ is a 1--cocycle. To integrate $\xi$ over $\gamma$ as suggested above we would consider a partition $\{t_i\}$ of $[0,1]$ and then evaluate $\xi$ over the ``approximate tangent vectors'' $\vv{\gamma(t_i) \gamma(t_{i+1})}$ to obtain the Riemann sum $\sum \xi(\gamma(t_i),\gamma(t_{i+1}))$. Then we would take progressively finer partitions $\{t_i\}$ of $[0,1]$ letting their diameters go to zero. What happens if we refine a partition by inserting $t_* \in (t_i,t_{i+1})$? The Riemann sum is modified by 

\centerline{$\xi(\gamma(t_i),\gamma(t_*)) + \xi(\gamma(t_*),\gamma(t_{i+1})) - \xi(\gamma(t_i),\gamma(t_{i+1})) = \delta \xi (\gamma(t_i), \gamma(t_*), \gamma(t_{i+1}))$.}

By the cocycle condition, this expression vanishes when the points are close enough. Thus, when the partition is fine enough further refinement do not change the value of the Riemann sum and, as a consequence, the limit of the sums as the diameter of the partition tends to zero trivially exists.

The same ideas also translate directly to an integration over \v{C}ech 1--homology classes $\gamma$: in that case the progressive refinement of the partition $\{t_i\}$ is actually part of the definition of $\gamma$, afforded by taking representatives $\gamma^{\mathcal{U}}$ of $\gamma$ that are $\mathcal{U}$--small for progressively fine coverings $\mathcal{U}$. 

%It should be kept in mind that in order to define integration along homology classes, the integral along two homologous chains must give equal results. As in the differentiable setting, the key to this property is a Stokes' formula that ensures that the integral of a cocycle along a boundary vanishes.

\subsection{Formal definition}

Now we define formally the integral of an Alexander-Spanier cohomology class $z \in \check{H}^q(X)$ over a \v{C}ech homology class $\gamma \in \check{H}_q(X)$. For the definition to make algebraic sense we must be able to multiply coefficients in homology with coefficients in cohomology. This can be ensured in many ways; for instance by taking homology with $\mathbb{Z}$--coefficients and cohomology with coefficients in an arbitrary group. We will not pursue the maximum generality in this regard, so we just require that the group of coefficients $G$ be a ring itself, and in fact will soon specialize to the case when $G$ is a field $\mathbb{K}$.

We need some preliminaries. Let $\xi \colon X^{q+1} \to G$ be a $q$--cochain and let $\sigma = (x_0 \ \ldots \ x_q)$ be a $q$--simplex. We can evaluate $\xi$ over $\sigma$ to obtain an element in $G$ in the obvious way, setting $\xi(\sigma) := \xi(x_0,\ldots,x_q)$. If $c = \sum k_i \sigma_i$ is a $q$--chain we define the evaluation of $\xi$ over $c$ by extending linearly the definition above: $\xi(c) := \sum k_i \xi(\sigma_i)$. Denoting by $\xi( \cdot )$ the standard evaluation of $\xi$ on a tuple of points and the evaluation of $\xi$ on a simplex or a chain is a slight abuse of notation but should not cause any confusion. The following algebraic property is key to what follows and can be easily deduced from the definitions.

\begin{lemma}[Stokes' lemma] Let $\xi$ be a $(q-1)$--cochain and $c = \sum k_i \sigma_i$ a $q$--chain. Then $(\delta \xi)(c) = \xi(\partial c)$.
\end{lemma}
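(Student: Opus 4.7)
The plan is to observe that both sides of the claimed identity are linear in $c$ (by definition of evaluation of a cochain on a chain), so it suffices to prove the equality when $c = \sigma = (x_0\ \ldots\ x_q)$ is a single $q$--simplex. The rest of the argument is then a direct unravelling of the two definitions involved: the coboundary operator on Alexander--Spanier cochains and the boundary operator on formal chains are written down using the very same formal recipe (alternating sum over the operation of deleting the $j$--th entry), and the lemma just records that this parallel definition turns evaluation $\xi \mapsto \xi(\cdot)$ into a pairing that intertwines $\delta$ and $\partial$.

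Concretely, I would compute the right-hand side first. By the definition of $\partial$,
\[
\partial \sigma \;=\; \sum_{j=0}^{q} (-1)^j (x_0\ \ldots\ \widehat{x_j}\ \ldots\ x_q),
\]
so extending $\xi$ linearly to chains gives
\[
\xi(\partial \sigma) \;=\; \sum_{j=0}^{q} (-1)^j \xi(x_0,\ldots,\widehat{x_j},\ldots,x_q).
\]
On the other hand, by the definition of the coboundary,
\[
(\delta \xi)(\sigma) \;=\; (\delta \xi)(x_0,\ldots,x_q) \;=\; \sum_{j=0}^{q} (-1)^j \xi(x_0,\ldots,\widehat{x_j},\ldots,x_q),
\]
which is the same expression. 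This gives $(\delta \xi)(\sigma) = \xi(\partial \sigma)$, and the general case follows by linearity: for $c = \sum_i k_i \sigma_i$,
\[
(\delta \xi)(c) \;=\; \sum_i k_i (\delta \xi)(\sigma_i) \;=\; \sum_i k_i \xi(\partial \sigma_i) \;=\; \xi(\partial c).
\]

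There is really no obstacle here; the ``difficulty'' is essentially notational, namely checking that the two alternating sums line up index by index with matching signs. The content of the lemma is conceptual rather than computational: it says that under the formal-simplex viewpoint of Section \ref{sec:homology} and the Alexander--Spanier viewpoint of Section \ref{sec:background}, evaluation $(\xi,\sigma) \mapsto \xi(\sigma)$ is an honest adjoint pairing between $\delta$ and $\partial$. This is precisely what one needs to make the heuristic motivation (refinement of a partition changes a Riemann sum by $\delta \xi$ evaluated on a suitable simplex) into a rigorous argument in the next section.
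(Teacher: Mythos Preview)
Your proof is correct and is precisely the direct verification the paper has in mind when it says the lemma ``can be easily deduced from the definitions'' without spelling out any details. There is nothing to add: the reduction to a single simplex by linearity and the observation that $\delta$ and $\partial$ are given by the same alternating-sum formula is the entire content.
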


Now let $z \in \check{H}^q(X)$ and $\gamma \in \check{H}_q(X)$. Let $\xi$ be a $q$--cocycle that represents $z$ and let $\mathcal{U}$ be an open covering of $X$ such that $\delta \xi$ is zero over each member of $\mathcal{U}$. Let $c$ be a $\mathcal{U}$--small representative of $\gamma$. We define the integral of $z$ over $\gamma$ as \begin{equation} \label{eq:integral1} \int_{\gamma} z := \xi(c).\end{equation}
%More explicitly, if we write $c = \sum_i k_i (x_{i0} \ x_{i1} \ \ldots \ x_{i q})$ the above definition gives \[\int_{\gamma} z = \sum_i k_i \, \xi(x_{i0},x_{i1},\ldots,x_{i q})\] which agrees precisely with our heuristic motivation about ``Riemann sums of differential forms''.
There are several choices involved in this definition and to justify that the definition is correct we need to prove the following:

\begin{theorem} \label{teo:welldefined} The right hand side of \eqref{eq:integral1} is independent of the choices of $c$, $\mathcal{U}$, and $\xi$.
\end{theorem}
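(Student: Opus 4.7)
The plan is to dissect the theorem into three independence statements and handle each using Stokes' lemma together with the fact that different representatives of a homology class (resp.\ cohomology class) differ in a controlled way. Throughout, the crucial algebraic principle is that if $\xi$ is a $(q{-}1)$-cochain and $c$ is a $q$-chain, $(\delta\xi)(c)=\xi(\partial c)$; whenever one side is forced to vanish by a support/locality condition, the other yields the equality we need.

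First I would fix $\xi$ and $\mathcal{U}$ and show independence of $c$. If $c,c'$ are two $\mathcal{U}$-small representatives of $\gamma^{\mathcal{U}}\in H_q^{\mathcal{U}}(X)$, then there exists a $\mathcal{U}$-small $(q{+}1)$-chain $b$ with $c-c'=\partial b$. By Stokes, $\xi(c)-\xi(c')=\xi(\partial b)=(\delta\xi)(b)$. Writing $b=\sum k_j\tau_j$ with each $\tau_j$ a $\mathcal{U}$-small simplex and using that $\delta\xi$ vanishes on every tuple contained in a member of $\mathcal{U}$, each term $(\delta\xi)(\tau_j)$ is zero, so $\xi(c)=\xi(c')$.

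Next I would fix $\xi$ and compare two admissible coverings $\mathcal{U}_1,\mathcal{U}_2$ (both making $\delta\xi$ vanish), with corresponding small representatives $c_1,c_2$. Passing to a common refinement $\mathcal{U}$, the cochain $\delta\xi$ still vanishes on $\mathcal{U}$-small tuples, and $c_1,c_2$ are both $\mathcal{U}$-small representatives of $\gamma^{\mathcal{U}}$ (since the bonding map in homology is induced by the tautological inclusion $S_q^{\mathcal{U}_i}(X)\subseteq S_q^{\mathcal{U}}(X)$). The previous step then gives $\xi(c_1)=\xi(c_2)$.

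The main obstacle, and the most delicate step, is independence of the cocycle $\xi$ representing $z$. Suppose $\xi,\xi'$ both represent $z\in\check{H}^q(X)$. By the Alexander--Spanier description of $\check{H}^q(X)$ as the cohomology of $\{\overline{C}^q(X),\delta\}$, there exists a $(q{-}1)$-cochain $\eta$ and an open covering $\mathcal{W}$ such that $\xi-\xi'-\delta\eta$ is $\mathcal{W}$-locally zero. Choose an open covering $\mathcal{V}$ that refines $\mathcal{W}$ and simultaneously forces both $\delta\xi$ and $\delta\xi'$ to vanish on $\mathcal{V}$-small tuples; by step two I may compute both integrals using a single $\mathcal{V}$-small representative $c$ of $\gamma$. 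Then
\[
\xi(c)-\xi'(c)=(\delta\eta)(c)+\bigl(\xi-\xi'-\delta\eta\bigr)(c).
\]
The second summand vanishes because every simplex appearing in $c$ is $\mathcal{V}$-small and hence $\mathcal{W}$-small, on which $\xi-\xi'-\delta\eta$ is identically zero. The first summand equals $\eta(\partial c)$ by Stokes, and $\partial c=0$ because $c$ represents a homology class. Thus $\xi(c)=\xi'(c)$, completing the proof. The only subtlety here is making sure one may simultaneously arrange $\mathcal{V}$ to refine $\mathcal{W}$ and to make both $\delta\xi,\delta\xi'$ vanish on $\mathcal{V}$-small tuples, which is immediate by passing to a common refinement of the three given coverings.
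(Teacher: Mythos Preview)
Your argument follows the same three-step structure as the paper's and is correct in steps one and three. There is, however, a directional slip in step two. If $\mathcal{U}$ is a \emph{common refinement} of $\mathcal{U}_1$ and $\mathcal{U}_2$ (that is, finer than both), then the inclusion of chain groups goes the opposite way: one has $S_q^{\mathcal{U}}(X)\subseteq S_q^{\mathcal{U}_i}(X)$, not $S_q^{\mathcal{U}_i}(X)\subseteq S_q^{\mathcal{U}}(X)$. Consequently $c_1$ and $c_2$ are not in general $\mathcal{U}$-small, and you cannot invoke step one at scale $\mathcal{U}$.

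The paper fixes this by first treating the case where one covering refines the other (then the representative for the finer covering is automatically small for the coarser one, and step one applies at the coarser scale), and afterwards handling the general case by passing through the common refinement $\mathcal{U}_1\vee\mathcal{U}_2$. Alternatively, the inclusion you wrote \emph{is} valid if you instead take $\mathcal{U}=\mathcal{U}_1\cup\mathcal{U}_2$, a common \emph{coarsening}: then each $c_i$ is $\mathcal{U}$-small, both represent $\gamma^{\mathcal{U}}$ via the bonding maps, and $\delta\xi$ still vanishes on every member of $\mathcal{U}$ since each such set already belongs to $\mathcal{U}_1$ or $\mathcal{U}_2$. With either correction your proof goes through and is essentially the same as the paper's.
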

\begin{proof} (i) Let $c_1$ and $c_2$ be two $\mathcal{U}$--small chains that represent $\gamma$. Then $c_1 - c_2 = \partial d$ for some $\mathcal{U}$--small chain $d$. By the linearity of evaluation and Stokes' lemma we have \[\xi(c_1) - \xi(c_2) = \xi(c_1 - c_2) = \xi(\partial d) = (\delta \xi)(d)\] and the latter term vanishes because $\delta \xi$ is zero over every member of $\mathcal{U}$ and $d$ is $\mathcal{U}$--small. Thus the right hand side of Equation \eqref{eq:integral1} does not depend on the particular $\mathcal{U}$--small chain $c$ chosen to represent $\gamma$.

(ii) Let $\mathcal{V}$ be another open covering such that $\delta \xi$ is locally zero over $\mathcal{V}$ and suppose first that $\mathcal{V}$ refines $\mathcal{U}$. Let $c^{\mathcal{V}}$ and $c^{\mathcal{U}}$ be representatives of $\gamma$ that are $\mathcal{V}$-- and $\mathcal{U}$--small respectively. Observe that $\gamma^{\mathcal V} = [c^{\mathcal{V}}]$ gets mapped to $\gamma^{\mathcal U} = [c^{\mathcal{U}}]$ in $H^{\mathcal{U}}_*(X)$ by the bonding morphism $H_q^{\mathcal{V}}(X) \rightarrow H_q^{\mathcal{U}}(X)$. Thus there exists a $\mathcal{U}$--small chain $d$ such that $c^{\mathcal{V}} - c^{\mathcal{U}} = \partial d^{\mathcal{U}}$ in the group of $\mathcal{U}$--chains. Again by linearity of evaluation and Stokes' lemma we have \[\xi(c^{\mathcal{U}}) - \xi(c^{\mathcal{V}}) = \xi(c^{\mathcal{U}} - c^{\mathcal{V}}) = \xi(\partial d^{\mathcal{U}}) = (\delta \xi)(d^{\mathcal{U}}) = 0,\]
so $\xi(c^{\mathcal{U}}) = \xi(c^{\mathcal{V}})$. In the general case when $\mathcal{V}$ does not refine $\mathcal{U}$, one simply goes through the common refinement $\mathcal{U} \vee \mathcal{V} := \{U \cap V : U \in \mathcal{U}, V \in \mathcal{V}\}$.% This refines both $\mathcal{U}$ and $\mathcal{V}$, and so by the previous argument $\xi(c^{\mathcal{U}}) = \xi(c^{\mathcal{U} \vee \mathcal{V}}) = \xi(c^{\mathcal{V}})$, where $c^{\mathcal{U} \vee \mathcal{V}}$ is any $(\mathcal{U} \vee \mathcal{V})$--small representative of $\gamma$.

(iii) Finally, to check that the right hand side of Equation \eqref{eq:integral1} is independent of $\xi$, let $\xi_1$ and $\xi_2$ be two representatives of $z$. Then there exist $\eta$ and an open covering $\mathcal{U}$ such that $\xi_1 - \xi_2 = \delta \eta$ over each member of $\mathcal{U}$ and also $\delta \xi_1$ and $\delta \xi_2$ vanish over each member of $\mathcal{U}$. Let $c$ be a $\mathcal{U}$--small representative of $\gamma$. Then \[\xi_1(c) - \xi_2(c) = (\xi_1-\xi_2)(c) = (\delta \eta)(c) = \eta(\partial c) = \eta(0) = 0,\]
and $\xi_1(c) = \xi_2(c)$ as desired.
%where in the second equality we have used that $c$ is a $\mathcal{U}$--small chain and $\xi_1-\xi_2 = \delta \eta$ over each member of $\mathcal{U}$, and in the second to last equality we have used that $c$ is a cycle.  
\end{proof}

One would expect that the integral be linear in both $z$ and $\gamma$, and this is certainly the case. For later convenience we reformulate this as follows. The integral can be considered as a map \[\int : \check{H}^q(X) \times \check{H}_q(X) \to G\] and then we have:

\begin{proposition} The integral is a bilinear form.
\end{proposition}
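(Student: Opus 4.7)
The plan is to unpack the definition of the integral and verify linearity in each argument separately. The only non-trivial aspect is to ensure that the choices of representatives and open coverings involved in the definition can be harmonized; once this is done, bilinearity reduces to the manifest bilinearity of the evaluation map $\xi(c) = \sum_i k_i \xi(\sigma_i)$ on cochains and chains.

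For linearity in the cohomology argument, let $z_1, z_2 \in \check{H}^q(X)$ be represented by cocycles $\xi_1, \xi_2$ whose coboundaries vanish over open coverings $\mathcal{U}_1$ and $\mathcal{U}_2$, respectively. First I would pass to the common refinement $\mathcal{U} := \mathcal{U}_1 \vee \mathcal{U}_2$, over which both $\delta \xi_1$ and $\delta \xi_2$ vanish. Then $\xi_1 + \xi_2$ is a cocycle at scale $\mathcal{U}$ representing $z_1 + z_2$, and choosing a $\mathcal{U}$-small representative $c$ of $\gamma$ I obtain $\int_\gamma (z_1 + z_2) = (\xi_1+\xi_2)(c) = \xi_1(c) + \xi_2(c) = \int_\gamma z_1 + \int_\gamma z_2$. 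Compatibility with scalar multiplication $\int_\gamma (r \cdot z) = r \cdot \int_\gamma z$ for $r \in G$ is analogous, using $(r \xi)(c) = r \cdot \xi(c)$ and the fact that $r\xi$ remains a cocycle at scale $\mathcal{U}$.

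For linearity in the homology argument, fix $z \in \check{H}^q(X)$ with representative $\xi$ whose coboundary vanishes over some $\mathcal{U}$. Given $\gamma_1, \gamma_2 \in \check{H}_q(X)$, I would select $\mathcal{U}$-small representatives $c_1, c_2$ via the canonical projections $\check{H}_q(X) \to H_q^{\mathcal{U}}(X)$. Then $c_1 + c_2$ is a $\mathcal{U}$-small representative of $\gamma_1 + \gamma_2$, whence $\int_{\gamma_1+\gamma_2} z = \xi(c_1+c_2) = \xi(c_1) + \xi(c_2) = \int_{\gamma_1} z + \int_{\gamma_2} z$. Scalar compatibility in this argument is handled identically, since $r \cdot c$ is $\mathcal{U}$-small whenever $c$ is.

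I do not anticipate a serious obstacle: Theorem \ref{teo:welldefined} has already done the heavy lifting by establishing that the right-hand side of \eqref{eq:integral1} is independent of the compatible choices made, so the four bilinearity identities follow from the elementary linearity of the pairing $(\xi, c) \mapsto \xi(c)$ once the representatives are aligned at a common scale. If anything warrants attention it is merely the bookkeeping of coverings in the first part, which is dispatched by the common refinement $\mathcal{U}_1 \vee \mathcal{U}_2$.
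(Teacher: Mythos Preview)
Your proposal is correct and follows essentially the same approach as the paper: the paper's proof also reduces bilinearity to the evident linearity of $(\xi,c)\mapsto\xi(c)$ after aligning representatives at a common refinement $\mathcal{U}\vee\mathcal{U}'$, invoking Theorem~\ref{teo:welldefined} to justify the freedom in those choices. The paper only spells out additivity in the cohomology variable and leaves the rest implicit, so your write-up is in fact slightly more complete.
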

\begin{proof} This is straightforward using the freedom in $c$, $\mathcal{U}$ and $\xi$ afforded by the previous theorem. Just to illustrate this let us show that $\int_{\gamma} (z + z') = \int_{\gamma} z + \int_{\gamma} z'$ for any $\gamma\in\check{H}_{q}(X)$ and $z,z'\in\check{H}^{q}(X)$. Let $\xi$ be a cocycle that represents $z$ and let $\mathcal{U}$ be an open covering such that $\delta \xi$ is zero over each member of $\mathcal{U}$. Let $\xi'$ and $\mathcal{U}'$ be analogous objects for $z'$. Then $\xi + \xi'$ represents $z+z'$ and its coboundary is zero over each member of the common refinement $\mathcal{U} \vee \mathcal{U}'$. Observe that $\delta \xi$ and $\delta \xi'$ are still zero over each member of $\mathcal{U} \vee \mathcal{U}'$, and so we may use the latter covering to compute the three integrals $\int_{\gamma} z$, $\int_{\gamma} z'$ and $\int_{\gamma} (z+z')$ by choosing a representative $c$ of $\gamma$ which is $(\mathcal{U} \vee \mathcal{U}')$--small. Then clearly $$\int_{\gamma}(z+z')=(\xi+\xi')(c)=\xi(c)+\xi'(c)=\int_{\gamma}z+\int_{\gamma}z'.$$
\end{proof}

It is convenient to observe that the definition of the integral makes sense not only in the limit of \v{C}ech homology and cohomology, but already at scale $\mathcal{U}$ for any open covering $\mathcal{U}$ of $X$. Indeed, given $z \in H_{\mathcal{U}}^q(X)$ and $\gamma \in H_q^{\mathcal{U}}(X)$, one can pick a cocycle $\xi$ which represents $z$ and satisfies $\partial \xi = 0$ over every $\mathcal{U}$--small simplex and a $\mathcal{U}$--small chain $c$ which represents $\gamma$ and define \[\int_{\gamma} z := \xi(c).\] The proof that this is independent of $\xi$ and $c$ is completely analogous to steps (i) and (iii) in the proof of Theorem \ref{teo:welldefined}. In particular, the integral in this generalized sense is still bilinear.

\begin{remark}\label{rmk:scales}
There is no need to record $\mathcal{U}$ in the notation or even to distinguish this notationally from the integral defined at the beginning of this section, and the reason is the following. A cohomology class $z_{\mathcal U} \in H^*_{\mathcal U}(X)$ represented by a cocycle $\xi$ can be viewed as a cohomology class $z_{\mathcal V}$ at scale $\mathcal V$ for any refinement $\mathcal V$ of $\mathcal U$ or as an element $z$ of $\check{H}^*(X)$ represented by the same cocycle $\xi$. If we want to integrate $z$ over a \v{C}ech homology class $\gamma = (\gamma^{\mathcal U})$ all we have to do is take a $\mathcal U$--small cycle $c$ that represents $\gamma$ and return $\xi(c)$. Exactly the same computation can be carried out to compute the integral of $z_{\mathcal U}$ along $\gamma^{\mathcal U}$ because $c$ is a representative of $\gamma^{\mathcal U}$. Note that we could have chosen also a representative $c'$ of $\gamma^{\mathcal V}$ for some refinement $\mathcal V$ to compute $\int_{\gamma^{\mathcal U}} z_{\mathcal U}$ because $c$ and $c'$ are homologous as $\mathcal U$--small cycles.
\end{remark}

The integral also behaves well under the action of continuous maps:

\begin{lemma}\label{lemma_evaluation_map}
Let $f\colon X\to Y$ be a map. 
Then 
$$\xi(f_{\sharp}c)=f^{\sharp}\xi(c)$$
where $\xi$ is a $q$--cochain in $Y$ and $c$ is a $q$--chain in $X$.
\end{lemma}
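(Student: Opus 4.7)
The plan is to prove the identity by direct computation: first verify it on a single simplex, where it reduces to the defining formulas for $f_{\sharp}$ and $f^{\sharp}$, and then extend to arbitrary chains by the linearity of evaluation.

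First I would recall the relevant definitions from the paper. Given a simplex $\sigma = (x_0\ \ldots\ x_q)$ in $X$, the paper defines $f_{\sharp}\sigma := (f(x_0)\ \ldots\ f(x_q))$, and $f_{\sharp}$ extends linearly to chains. On the cohomology side, $f^{\sharp}\xi := \xi \circ f$, meaning $(f^{\sharp}\xi)(x_0,\ldots,x_q) = \xi(f(x_0),\ldots,f(x_q))$. Finally, evaluation of a cochain on a chain $c = \sum k_i \sigma_i$ is defined linearly by $\xi(c) = \sum k_i \xi(\sigma_i)$.

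With these in hand, the computation on a single simplex $\sigma = (x_0\ \ldots\ x_q)$ is immediate:
\[
\xi(f_{\sharp}\sigma) = \xi(f(x_0),\ldots,f(x_q)) = (f^{\sharp}\xi)(x_0,\ldots,x_q) = (f^{\sharp}\xi)(\sigma),
\]
where both equalities are just the definitions unwound.

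For a general chain $c = \sum_i k_i \sigma_i$, I would invoke linearity of $f_{\sharp}$ and of the evaluation pairing:
\[
\xi(f_{\sharp} c) = \xi\Bigl(\sum_i k_i f_{\sharp}\sigma_i\Bigr) = \sum_i k_i \xi(f_{\sharp}\sigma_i) = \sum_i k_i (f^{\sharp}\xi)(\sigma_i) = (f^{\sharp}\xi)(c),
\]
which establishes the claim. There is no real obstacle here; the statement is a pure unraveling of definitions and linear extension, and indeed it is exactly the algebraic adjunction underlying the fact that $f^{\sharp}$ and $f_{\sharp}$ are transposes with respect to the evaluation pairing. The only mild subtlety worth mentioning (though not strictly needed for the stated lemma, since the claim is algebraic and imposes no smallness hypothesis) is that if one wishes to apply this identity in the presence of a covering $\mathcal{V}$ of $Y$ and $\mathcal{U}$ of $X$ with $\mathcal{U}$ refining $f^{-1}(\mathcal{V})$, then $f_{\sharp}$ sends $\mathcal{U}$-small chains to $\mathcal{V}$-small chains and $f^{\sharp}$ sends $\mathcal{V}$-locally zero cochains to $\mathcal{U}$-locally zero ones, so the identity descends to the quotients used in the definition of the integral.
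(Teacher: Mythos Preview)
Your proof is correct and is exactly the straightforward unwinding of definitions that the statement calls for; the paper in fact states this lemma without proof, presumably because it is immediate from the definitions, so your argument is precisely what is expected.
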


\begin{lemma}\label{lem:integral_map}
Let $f\colon X\rightarrow Y$ be a map and $\mathcal U$ and $\mathcal V$ open coverings of $X$ and $Y$, respectively, such that $\mathcal U$ refines $f^{-1} \mathcal V$.
Then 
$$\int_{\gamma}f^{*}z=\int_{f_{*}\gamma}z$$
where $z\in H^{q}_{\mathcal V}(Y)$ and $\gamma\in H_{q}^{\mathcal U}(X)$.
\end{lemma}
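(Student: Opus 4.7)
The plan is to unwind the definitions of $f^{*}$, $f_{*}$, and the integral at scale, and then invoke the preceding Lemma \ref{lemma_evaluation_map} as the single algebraic identity that makes everything collapse.

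First I would choose representatives: pick a cocycle $\xi\colon Y^{q+1}\to G$ representing $z\in H^{q}_{\mathcal V}(Y)$ (so $\delta\xi$ vanishes on every $\mathcal V$--small simplex of $Y$) and a $\mathcal U$--small chain $c\in S^{\mathcal U}_{q}(X)$ representing $\gamma$. The hypothesis that $\mathcal U$ refines $f^{-1}\mathcal V$ is exactly the compatibility condition from Section \ref{sec:imagenfinita}: it guarantees that for every $U\in\mathcal U$ there is some $V\in\mathcal V$ with $f(U)\subseteq V$, so $f_{\sharp}$ sends $\mathcal U$--small simplices to $\mathcal V$--small simplices and $f^{\sharp}$ sends $\mathcal V$--locally zero cochains to $\mathcal U$--locally zero cochains. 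Consequently $f_{\sharp}c$ is a $\mathcal V$--small representative of $f_{*}\gamma\in H^{\mathcal V}_{q}(Y)$, and $f^{\sharp}\xi=\xi\circ f$ is a representative of $f^{*}z\in H^{q}_{\mathcal U}(X)$ whose coboundary $\delta(f^{\sharp}\xi)=f^{\sharp}(\delta\xi)$ vanishes on every $\mathcal U$--small simplex.

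With these admissible representatives in hand, both integrals are computed by straight evaluation (in the scale version discussed in Remark \ref{rmk:scales}):
\[
\int_{\gamma} f^{*}z = (f^{\sharp}\xi)(c), \qquad \int_{f_{*}\gamma} z = \xi(f_{\sharp}c).
\]
The two right-hand sides are equal by Lemma \ref{lemma_evaluation_map}, which is precisely the adjunction $\xi(f_{\sharp}c)=f^{\sharp}\xi(c)$. This finishes the argument.

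There is no real obstacle: the only thing to be careful about is that the covering condition $\mathcal U\prec f^{-1}\mathcal V$ is used twice in tandem, once on the chain side (to ensure $f_{\sharp}c$ is $\mathcal V$--small so the right-hand integral is defined) and once on the cochain side (to ensure $f^{\sharp}\xi$ has coboundary vanishing on $\mathcal U$--small simplices so the left-hand integral is defined). Both facts are immediate from the refinement hypothesis and the definitions of $f_{\sharp}$ and $f^{\sharp}$ recalled at the start of Section \ref{sec:imagenfinita}.
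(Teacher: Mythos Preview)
Your proof is correct and follows essentially the same route as the paper's: choose representatives $\xi$ of $z$ and $c$ of $\gamma$, use the refinement hypothesis to see that $f^{\sharp}\xi$ and $f_{\sharp}c$ are admissible representatives at scales $\mathcal U$ and $\mathcal V$ respectively, and then apply Lemma~\ref{lemma_evaluation_map} to conclude. Your write-up is somewhat more explicit about how the refinement condition is used on both the chain and cochain sides, but the argument is identical in substance.
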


\begin{proof}
Let  $z\in H^{q}_{\mathcal V}(Y)$ and $\gamma\in H_{q}^{\mathcal U}(X)$.
Let $\xi$ be a $q$--cocycle that represents $z$. Since $\delta \xi$ is zero over each member of $\mathcal V$ and $\mathcal U$ refines $f^{-1}\mathcal V$, we conclude that $\delta f^{\sharp}\xi$ vanishes over each member of $\mathcal U$. Letting $c$ be a $\mathcal{U}$--small representative of $\gamma$, since $f_{\sharp}c$ is a $\mathcal{V}$--small representative of $f_{*}\gamma$ we have that
$$\int_{\gamma}f^{*}z=f^{\sharp}\xi(c)=\xi(f_{\sharp}c)=\int_{f_{*}\gamma}z.$$
%where the second equality is due to the Lemma \ref{lemma_evaluation_map}.
\end{proof}

%=========================================================
\section{Nondegeneration of the integral}\label{sec:nondegenerate}
%=========================================================

Throughout all this section the \emph{coefficients are taken in a field $\mathbb K$ which we shall omit from the notation}.

To motivate the forthcoming discussion let us recall a well known result in multivariable calculus. Let a vectorfield $V$ be defined on some open subset $X$ of $\mathbb{R}^3$ and assume that the integral of $V$ along any closed curve contained in $X$ vanishes. Then (under suitable smoothness assumptions) $V$ is the gradient of some function $f$; a so-called potential for the vectorfield. The standard proof constructs explicitly the potential $f$ as follows. One fixes some reference point $p_0 \in X$ and then for each $p \in X$ chooses a path $\gamma_p$ contained in $X$ that joins $p_0$ with $p$ (if $X$ is not connected this has to be done on each connected component). Then $f(p)$ is defined as the integral of the vectorfield $V$ along the path $\gamma_p$. The assumption that the integral of $V$ along any closed curve is zero ensures that $f(p)$ is independent of the path $\gamma_p$, and checking that the gradient of $f$ is precisely $V$ is then just a matter of simple differential calculus.

The same construction can be used, with appropriate modifications, in our setting. Suppose $X$ is an arbitrary space. Let $\mathcal{U}$ be an open covering of $X$ and let $\overline{\xi} \in \overline{C}^1_{\mathcal U}(X)$, i.e. $\xi$ is a 1--cochain such that $\delta \xi$ vanishes over every member of $\mathcal U$ and so $\overline{\xi}$ is a representative of a cohomology class $z \in H^1_{\mathcal U}(X)$. Assume further that $\xi$ has the property that for every $\mathcal{U}$--small cycle $c$, the evaluation $\xi(c)$ vanishes. Then there exists a ``potential'' for $\xi$; that is, a 0--cochain $V$ such that $\delta V = \xi$ on each element of $\mathcal{U}$. To define $V$ we can fix a reference point $p_0 \in X$ and for any other point $p$ that can be joined to $p_0$ by a $\mathcal U$--small 1-chain $\sigma$ ($\partial \sigma = p - p_0$) set $V(p) = \xi(\sigma)$, which is in spirit $\int_{\sigma} \xi$ although we prefer not to use this notation because $\sigma$ is not a cycle. Repeat the procedure until the definition reaches every point of $X$ to account for multiple ``$\mathcal U$--components''.

Note that the existence of $V$ that solves $\delta V = \xi$ implies that $z = 0$. Also, the assumption that $\xi(c)$ vanishes for each $\mathcal{U}$--small cycle is equivalent to the condition that the integral of $z$ over any $\gamma \in H_1^{\mathcal{U}}(X)$ vanishes.
In other words, we have the following result for $q = 1$:

\begin{equation}\label{eqn:injective1}
\int_{\gamma} z = 0 \quad \forall \gamma \in H_q^{\mathcal{U}}(X) \enskip \Rightarrow \enskip z = 0 \in H^q_{\mathcal U}(X).
\end{equation}

\begin{proposition}\label{prop:injective1}
%Let coefficients be taken in a field $\mathbb{K}$.
The statement in (\ref{eqn:injective1}) is true for every $q \geq 0$.
\end{proposition}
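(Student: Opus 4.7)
The goal is to produce, from a chosen cocycle $\xi$ representing $z$, a $(q-1)$-cochain $\eta$ such that $\xi - \delta\eta$ is $\mathcal{U}$-locally zero; this is precisely the condition that $z=0$ in $H^q_{\mathcal{U}}(X)$. The strategy is linear-algebraic and is essentially the field-coefficient universal coefficient theorem applied to the chain complex $S^{\mathcal{U}}_*(X)$, with the integral playing the role of the evaluation pairing.

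First, by the definition of the integral, the hypothesis $\int_\gamma z = 0$ for every $\gamma \in H^{\mathcal{U}}_q(X)$ says exactly that $\xi(c)=0$ for every $\mathcal{U}$-small $q$-cycle $c$. View $\xi$ as a $\mathbb{K}$-linear functional on the space $S^{\mathcal{U}}_q(X)$ of $\mathcal{U}$-small $q$-chains. The hypothesis means that $\xi$ vanishes on the subspace $Z^{\mathcal{U}}_q(X) \subset S^{\mathcal{U}}_q(X)$ of $\mathcal{U}$-small cycles, so it descends to a functional on the quotient $S^{\mathcal{U}}_q(X)/Z^{\mathcal{U}}_q(X)$. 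Via the isomorphism induced by $\partial$ between this quotient and the subspace of boundaries $B^{\mathcal{U}}_{q-1}(X) \subset S^{\mathcal{U}}_{q-1}(X)$, we obtain a well-defined linear map $\varphi \colon B^{\mathcal{U}}_{q-1}(X) \to \mathbb{K}$ characterized by $\varphi(\partial c) = \xi(c)$.

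Second, because $\mathbb{K}$ is a field, the subspace $B^{\mathcal{U}}_{q-1}(X)$ admits a linear complement in $S^{\mathcal{U}}_{q-1}(X)$, so $\varphi$ extends to a functional on all of $S^{\mathcal{U}}_{q-1}(X)$. Extend this further to a $(q-1)$-cochain $\eta \colon X^{q} \to \mathbb{K}$ by declaring $\eta$ to vanish on every $q$-tuple that is not $\mathcal{U}$-small. For any $\mathcal{U}$-small $q$-simplex $\sigma$, the chain $\partial\sigma$ is a $\mathcal{U}$-small element of $B^{\mathcal{U}}_{q-1}(X)$, so Stokes' lemma gives
\[
(\delta\eta)(\sigma) \;=\; \eta(\partial\sigma) \;=\; \varphi(\partial\sigma) \;=\; \xi(\sigma).
\]
Hence $\xi - \delta\eta$ vanishes on every $\mathcal{U}$-small tuple and is therefore $\mathcal{U}$-locally zero, so $\overline{\xi} = \delta\overline{\eta}$ in $\overline{C}^q_{\mathcal{U}}(X)$ and $z=0$.

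I do not anticipate any serious obstacles; the two points that require a bit of care are the well-definedness of $\varphi$ (which is the direct translation of the vanishing of $\xi$ on cycles) and the fact that extending $\eta$ by zero on non-$\mathcal{U}$-small tuples does not spoil the computation, since $(\delta\eta)(\sigma) = \eta(\partial\sigma)$ is only being tested when $\sigma$ is $\mathcal{U}$-small and then $\partial\sigma$ is automatically $\mathcal{U}$-small as well. The field hypothesis on $\mathbb{K}$ enters only through the splitting of the inclusion $B^{\mathcal{U}}_{q-1}(X) \hookrightarrow S^{\mathcal{U}}_{q-1}(X)$, and accounts for why the result is stated under this assumption.
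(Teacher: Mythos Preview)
Your proof is correct and follows essentially the same approach as the paper: both define the potential $\eta$ on $B^{\mathcal{U}}_{q-1}(X)$ via $\eta(\partial c)=\xi(c)$, extend it $\mathbb{K}$-linearly to all $(q-1)$-chains using a complement, and then verify $\delta\eta=\xi$ on $\mathcal{U}$-small simplices. The only cosmetic difference is that you establish the well-definedness of $\varphi$ at the outset (by observing $\xi$ vanishes on cycles), whereas the paper defines $\eta$ on a basis of boundaries and checks consistency at the end; your extension by zero on non-$\mathcal{U}$-small tuples is just a specific instance of the paper's ``arbitrary'' extension to $S_{q-1}^{\{X\}}(X)$.
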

\begin{proof} %The case $q = 0$ is immediate because any point $x$ defines a class $[x]$ in $H_0^{\mathcal U}(X)$ and the integration over $[x]$ is simply the evaluation of the cocycle at $x$.

Let us construct the higher dimensional analogue of the ``potential''. Let $\overline{\xi} \in \overline{C}^q_{\mathcal U}(X)$ be a cocycle which represents $z$. The goal is to find a cochain $\overline{\eta} \in \overline{C}^{q-1}_{\mathcal U}(X)$ that satisfies $\delta \overline{\eta} = \overline{\xi}$; that is, $\delta \eta(c) = \xi(c)$ for every $\mathcal{U}$--small $q$--chain $c$. %Notice that here we are identifying cochains with their linear extensions to operators on the group of chains, as described in p. \pageref{rem:cochains_ext}. \textcolor{red}{OJO CON LA REFERENCIA}

Let $B_{q-1}^{\mathcal{U}}(X) \subseteq S_{q-1}^{\mathcal{U}}(X)$ be the subspace of boundaries; that is, the set of $\mathcal{U}$--small $(q-1)$--chains that bound a $\mathcal{U}$--small $q$--chain. In turn, $S_{q-1}^{\mathcal{U}}(X)$ is a subspace of $S_{q-1}^{\{X\}}(X)$. Notice that this last group contains all formal $(q-1)$--chains in $X$ regardless of their size.

Take a basis $\mathcal{B} = \{d_i\}$ of $B_{q-1}^{\mathcal{U}}(X)$. For each $d_i$ let $c_i$ be a $\mathcal{U}$--small $q$--chain such that $d_i = \partial c_i$. There exists a unique linear map $\eta$ defined on $B_{q-1}^{\mathcal{U}}(X)$ such that $\eta(d_i) = \xi(c_i)$. Extend arbitrarily this to another linear map (again denoted by $\eta$) defined on all of $S_{q-1}^{\mathcal{U}}(X)$, and then again to a linear map defined on all of $S_{q-1}^{\{X\}}(X)$. In particular $\eta(x_0,\ldots,x_{q-1}) = \eta(x_0 \ \ldots \ x_{q-1})$ is well defined for every tuple of points in $X$.

We claim that $\delta \eta(c) = \xi(c)$ for every $\mathcal{U}$--small $q$--chain $c$. Let $c$ be such a chain. Its boundary is an element of $B_{q-1}^{\mathcal{U}}(X)$ and can therefore be written as a finite linear combination of elements of $\mathcal{B}$, say $\partial c = \sum k_i d_i$. Then \[\delta \eta(c) = \eta(\partial c) = \sum k_i \eta(d_i) = \sum k_i \xi(c_i) = \xi( \sum k_i c_i) = \xi(c')\] where $c':= \sum k_i c_i$ satisfies $\partial c' = \partial c$. Therefore $\gamma = [c'-c]$ is an element of $H_q^{\mathcal{U}}(X)$. By assumption $0 = \int_{\gamma} z = \xi(c'-c)$, so $\xi(c') = \xi(c)$ and the equality above leads to $\delta \eta(c) = \xi(c)$, as was to be shown.
\end{proof}

It is also natural to ask if a dual result holds, that is, whether an element $\gamma \in H_q^{\mathcal{U}}(X)$ which satisfies $\int_{\gamma} z = 0$ for every $z \in H^q_{\mathcal{U}}(X)$ must necessarily be zero.

\begin{proposition} \label{prop:injective2}
%Let coefficients be taken in a field $\mathbb{K}$.
The statement dual to (\ref{eqn:injective1}) is true for every $q \geq 0$.
\end{proposition}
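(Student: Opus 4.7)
The plan is to argue by contrapositive: starting from $\gamma \neq 0$, I will explicitly construct a $\mathcal{U}$--cocycle whose integral over $\gamma$ is nonzero. Fix a $\mathcal{U}$--small cycle $c \in Z_q^{\mathcal{U}}(X) \setminus B_q^{\mathcal{U}}(X)$ that represents $\gamma$. Because the coefficients are in a field, $B_q^{\mathcal{U}}(X) \subseteq Z_q^{\mathcal{U}}(X) \subseteq S_q^{\mathcal{U}}(X)$ is a chain of $\mathbb{K}$--vector subspaces, so I can choose a linear functional $\bar\phi \colon Z_q^{\mathcal{U}}(X) \to \mathbb{K}$ that vanishes on $B_q^{\mathcal{U}}(X)$ and satisfies $\bar\phi(c) \neq 0$ (pick a vector space complement of $B_q^{\mathcal{U}}(X)$ in $Z_q^{\mathcal{U}}(X)$ containing a representative of $[c]$ and project).

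Next, extend $\bar\phi$ to a linear map $\psi \colon S_q^{\mathcal{U}}(X) \to \mathbb{K}$ using any complement of $Z_q^{\mathcal{U}}(X)$ in $S_q^{\mathcal{U}}(X)$, and then promote $\psi$ to a $q$--cochain $\xi \colon X^{q+1} \to \mathbb{K}$ by declaring $\xi(x_0,\ldots,x_q)$ to equal $\psi(x_0\ \ldots\ x_q)$ when the tuple is $\mathcal{U}$--small and zero on the remaining tuples. By construction $\xi(c) = \bar\phi(c) \neq 0$, so the integral, if well-defined, will not vanish.

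The key verification is that $\xi$ defines a cocycle in $\overline{C}^q_{\mathcal{U}}(X)$, that is, that $\delta \xi$ is $\mathcal{U}$--locally zero. If $(x_0,\ldots,x_{q+1})$ all lie in some $U \in \mathcal{U}$, then by Stokes' lemma $\delta \xi (x_0,\ldots,x_{q+1}) = \xi(\partial (x_0\ \ldots\ x_{q+1}))$, and the chain $\partial (x_0\ \ldots\ x_{q+1})$ is a $\mathcal{U}$--small $q$--boundary; hence it lies in $B_q^{\mathcal{U}}(X) \subseteq Z_q^{\mathcal{U}}(X)$, on which $\psi$ agrees with $\bar\phi$ and so vanishes. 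Thus $[\xi] \in H^q_{\mathcal{U}}(X)$ is a genuine cohomology class and $\int_\gamma [\xi] = \xi(c) \neq 0$, contradicting the hypothesis. Therefore $\gamma = 0$.

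The main obstacle is conceptual rather than technical: the construction hinges on the field hypothesis, which ensures that the relevant subspaces of $\mathcal{U}$--small chains split off as direct summands so that partially defined linear functionals extend (invoking a Zorn's lemma argument in the infinite-dimensional case). This parallels what was needed in the proof of Proposition \ref{prop:injective1}, where the analogous extension was performed on cochains; here the roles are swapped and the extensions are on chains. Over an arbitrary ring the splitting and hence the argument would break, which is why the statement is formulated for field coefficients only.
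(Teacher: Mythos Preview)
Your proof is correct and follows essentially the same approach as the paper. Both arguments construct a linear functional on $S_q^{\mathcal{U}}(X)$ that vanishes on $B_q^{\mathcal{U}}(X)$ and is nonzero at $c$, and then verify via Stokes' lemma that such a functional is a $\mathcal{U}$--cocycle; the only cosmetic difference is that the paper first reduces (WLOG) to the case where $\mathrm{span}\{\sigma_i\}$ meets $B_q^{\mathcal{U}}(X)$ trivially and builds the functional explicitly on $\mathrm{span}\{\sigma_i\}\oplus B_q^{\mathcal{U}}(X)$, whereas you invoke abstract vector-space splittings to obtain $\bar\phi$ and its extension $\psi$ directly.
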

\begin{proof}
%For the case $q = 0$ it suffices to note that the $0$--cocycles are the functions which are constant in the ``$\mathcal U$--components'' ($C$ is a $\mathcal U$--component if contains or is disjoint to any member of $\mathcal U$).

%We have to prove that if $\int_{\gamma}z = 0$ for every $z \in H^q_{\mathcal{U}}(X)$ then $\gamma = 0$.
By contradiction, at the level of chains and cochains, given a $\mathcal{U}$-small cycle $c = \sum_{i=0}^n k_i \sigma_i$ that is not a boundary we need to find a cocycle $\xi \in C^q_{\mathcal U}(X)$ such that $\xi(c) \neq 0$. Assume without loss of generality that no linear combination of the simplices $\sigma_i$ yields a nontrivial boundary.

Define $\nu \colon \mathrm{span}\{\sigma_i\} \oplus B_q^{\mathcal U}(X) \to \mathbb K$ by $\nu(\sigma_0) = 1$, $\nu(\sigma_i) = 0$ if $i \neq 0$ and $\nu \equiv 0$ on $B_q^{\mathcal U}(X)$. This map extends linearly to the vector space $S_q^{\mathcal U}(X)$ of $\mathcal U$-small chains. The extension, which we denote by $\nu$ as well, satisfies $\delta \nu(d) = \nu(\partial d) = 0$ for every $\mathcal U$-small $(q+1)$--simplex $d$ so it may be seen as a cocycle that represents a cohomology class in $H^q_{\mathcal U}(X)$. From the definition of $\nu$ we obtain that $\nu(c) = \sum_i k_i \nu(\sigma_i) = k_0 \neq 0$, as desired.
 
\end{proof}

Let us formulate the previous results in algebraic terms. Regard again the integral as a bilinear pairing \[\int \colon H^q_{\mathcal{U}}(X) \times H_q^{\mathcal{U}}(X) \to \mathbb K.\] This then defines linear maps from one of its source groups to the dual of the other in a canonical fashion:

\begin{minipage}{0.5\textwidth}

\[\begin{array}{rcc}
H^q_{\mathcal{U}}(X) & \to & {\rm Hom} (H_q^{\mathcal{U}}(X),\mathbb K) \\
z & \longmapsto & I(z,\cdot) \colon \gamma \mapsto \displaystyle\int_{\gamma}z
\end{array}
\]
\end{minipage}
\begin{minipage}{0.5\textwidth}
\[\begin{array}{rcc}
H_q^{\mathcal{U}}(X) & \to & {\rm Hom} (H^q_{\mathcal{U}}(X), \mathbb K) \\
\gamma & \longmapsto & I(\cdot, \gamma) \colon z \mapsto \displaystyle\int_{\gamma}z
\end{array}
\]
\end{minipage}

\noindent We shall call these the canonical maps associated to the bilinear form $\int$ at scale $\mathcal U$.

\begin{proposition} \label{prop:nondeg_U}
%Let coefficients be taken in a field $\mathbb{K}$.
Let $\mathcal{U}$ be a finite open covering of $X$. Then, the canonical maps at scale $\mathcal U$ are isomorphisms.
\end{proposition}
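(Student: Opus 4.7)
The plan is to combine the two injectivity statements already proved (Propositions \ref{prop:injective1} and \ref{prop:injective2}) with a dimension count made available by the finiteness of $\mathcal{U}$. Both of those propositions say exactly that the canonical maps $\Phi \colon H^q_{\mathcal{U}}(X) \to \Hom(H_q^{\mathcal{U}}(X),\mathbb{K})$ and $\Psi \colon H_q^{\mathcal{U}}(X) \to \Hom(H^q_{\mathcal{U}}(X),\mathbb{K})$ have trivial kernel, so the remaining content is surjectivity.

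The key observation is that when $\mathcal{U}$ is finite, both $H^q_{\mathcal{U}}(X)$ and $H_q^{\mathcal{U}}(X)$ are finite-dimensional $\mathbb{K}$-vector spaces: for homology this is Remark \ref{rem:f_gen}, and for cohomology this is Remark \ref{rmk:cohomologiafingen}. Set $m = \dim_{\mathbb{K}} H^q_{\mathcal{U}}(X)$ and $n = \dim_{\mathbb{K}} H_q^{\mathcal{U}}(X)$. The injection $\Phi$ gives $m \leq \dim \Hom(H_q^{\mathcal{U}}(X),\mathbb{K}) = n$, while the injection $\Psi$ gives $n \leq \dim \Hom(H^q_{\mathcal{U}}(X),\mathbb{K}) = m$. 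Hence $m = n$ and each of the two injective linear maps between finite-dimensional spaces of equal dimension is automatically an isomorphism.

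There is essentially no obstacle here beyond invoking the earlier results; the only subtle point is simply to verify that the coefficient field does not affect the finite-generation remarks, which is immediate since those remarks proceed by reducing to the homology of the finite simplicial complex $N(\mathcal{U})$ (or $V(\mathcal{U})$), and a finitely generated $\mathbb{K}$-module is finite-dimensional. Thus the proof amounts to a short three-line argument assembling the pieces.
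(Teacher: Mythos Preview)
Your proof is correct and follows essentially the same approach as the paper: invoke Propositions \ref{prop:injective1} and \ref{prop:injective2} for injectivity of the canonical maps, use Remarks \ref{rem:f_gen} and \ref{rmk:cohomologiafingen} to get finite-dimensionality of both groups, and then conclude by a dimension count. You have simply spelled out the dimension argument a bit more explicitly than the paper does.
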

\begin{proof} The conclusion follows from Propositions \ref{prop:injective1} and \ref{prop:injective2} and the fact that homology and cohomology groups $H^{\mathcal{U}}_q(X;\mathbb{K})$ and $H_{\mathcal{U}}^q(X;\mathbb{K})$ are finite dimensional by Remarks \ref{rem:f_gen} and \ref{rmk:cohomologiafingen} (in fact, if one group is finite dimensional then so is the other). 
\end{proof}

One may wonder whether the proposition above holds true not only at a specific scale $\mathcal{U}$ but also in the limit. That is, if we now consider the integral as a bilinear pairing $\int : \check{H}^q(X) \times \check{H}_q(X) \to \mathbb{K}$, are the canonical maps isomorphisms? The following theorem provides an answer for compact spaces. The assumption on compactness is needed to have a cofinal system of finite open coverings on $X$.

\begin{theorem} \label{teo:nondeg} Let $X$ be compact. Then:
\begin{itemize}
	\item[(1)] The canonical map \[\begin{array}{ccc} \check{H}_q(X) & \to & {\rm Hom}(\check{H}^q(X),\mathbb{K}) \\ \gamma & \longmapsto & I(\cdot,{\gamma}) \end{array}\] is an isomorphism.
	\item[(2)] The canonical map \[\begin{array}{ccc} \check{H}^q(X) & \to & {\rm Hom}(\check{H}_q(X),\mathbb{K}) \\ z & \longmapsto & I(z,\cdot) \end{array}\] is injective. It is an isomorphism if either one of $\check{H}_q(X)$ or $\check{H}^q(X)$ are finite dimensional (in which case both are).
\end{itemize}
\end{theorem}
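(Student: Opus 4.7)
The plan is to reduce both parts of the theorem to the scale-level nondegeneracy of Proposition \ref{prop:nondeg_U} and then pass to the limit, exploiting that the compactness of $X$ makes the finite open coverings into a cofinal system in which every $H^q_{\mathcal U}(X)$ and $H_q^{\mathcal U}(X)$ is finite dimensional (Remarks \ref{rem:f_gen} and \ref{rmk:cohomologiafingen}).

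I would begin with part (1). Applying Lemma \ref{lem:integral_map} to the identity map $\mathrm{id}\colon X \to X$ shows that, for any refinement $\mathcal V \geq \mathcal U$, the inverse-system bonding map $H_q^{\mathcal V}(X) \to H_q^{\mathcal U}(X)$ is the transpose, under the scale-level dualities of Proposition \ref{prop:nondeg_U}, of the direct-system bonding map $H^q_{\mathcal U}(X) \to H^q_{\mathcal V}(X)$. Taking the inverse limit over finite $\mathcal U$ and invoking the universal-property identity $\varprojlim \mathrm{Hom}(W_{\mathcal U}, \mathbb K) \cong \mathrm{Hom}(\varinjlim W_{\mathcal U}, \mathbb K)$ yields
$$\check{H}_q(X) = \varprojlim_{\mathcal U} H_q^{\mathcal U}(X) \cong \varprojlim_{\mathcal U} \mathrm{Hom}\bigl(H^q_{\mathcal U}(X), \mathbb K\bigr) \cong \mathrm{Hom}\bigl(\check{H}^q(X), \mathbb K\bigr),$$
and unwinding the construction via Remark \ref{rmk:scales} identifies the resulting isomorphism with $\gamma \mapsto I(\,\cdot\,,\gamma)$.

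For the injectivity in part (2), I would take $z \in \check{H}^q(X)$ with $\int_\gamma z = 0$ for every $\gamma \in \check{H}_q(X)$, fix a finite covering $\mathcal U$ and a representative $z_{\mathcal U} \in H^q_{\mathcal U}(X)$ of $z$, and view $z_{\mathcal U}$ as a linear functional on $H_q^{\mathcal U}(X)$ via Proposition \ref{prop:nondeg_U}. Combined with Remark \ref{rmk:scales}, the hypothesis says that $z_{\mathcal U}$ annihilates the image of the projection $\check{H}_q(X) \to H_q^{\mathcal U}(X)$. The descending chain of subspaces $\mathrm{Im}\bigl(H_q^{\mathcal V}(X) \to H_q^{\mathcal U}(X)\bigr)$, indexed by finite refinements $\mathcal V \geq \mathcal U$, sits inside the finite dimensional $H_q^{\mathcal U}(X)$ and must therefore stabilize at some $\mathcal V_0$; the Mittag-Leffler lemma for finite dimensional inverse systems then identifies this stable value with $\mathrm{Im}(\check{H}_q(X) \to H_q^{\mathcal U}(X))$. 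Hence $z_{\mathcal U}$ annihilates $\mathrm{Im}(H_q^{\mathcal V_0}(X) \to H_q^{\mathcal U}(X))$, and the transpose relation from part (1) forces $z_{\mathcal V_0} = 0$ in $H^q_{\mathcal V_0}(X)$, so $z = 0$ in the direct limit.

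For the finite dimensional criterion, if $\check{H}^q(X)$ is finite dimensional then part (1) makes $\check{H}_q(X) \cong \check{H}^q(X)^*$ finite dimensional of the same dimension, and hence $\mathrm{Hom}(\check{H}_q(X), \mathbb K) \cong \check{H}^q(X)^{**}$ as well; the injective canonical map between equidimensional finite dimensional spaces must be an isomorphism. If instead $\check{H}_q(X)$ is finite dimensional, part (1) makes $\check{H}^q(X)^*$ finite dimensional, which forces $\check{H}^q(X)$ itself to be finite dimensional (since $\dim V \leq \dim V^*$, with equality exactly in the finite dimensional case), reducing to the previous situation. The main delicate point I expect to handle carefully is the naturality statement that the direct-system bonding map in cohomology corresponds, under the pairings of Proposition \ref{prop:nondeg_U} at different scales, to the transpose of the inverse-system bonding map in homology; once that is established, both parts follow formally from properties of limits of finite dimensional vector spaces.
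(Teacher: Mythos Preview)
Your argument is correct. For part (1) your approach is the paper's own: the paper packages the same computation as an abstract Proposition \ref{prop:nondegabstract}, whose part (1) is proved by exactly the surjectivity/injectivity calculation underlying the categorical identity $\varprojlim \Hom(V_{\mathcal U},\mathbb K)\cong \Hom(\varinjlim V_{\mathcal U},\mathbb K)$ you invoke.

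For the injectivity in part (2), however, you take a genuinely different route. You run a Mittag--Leffler stabilization argument inside the finite dimensional $H_q^{\mathcal U}(X)$ to identify $\mathrm{Im}\bigl(\check H_q(X)\to H_q^{\mathcal U}(X)\bigr)$ with a stable image and then use the transpose relation to kill $z_{\mathcal V_0}$. This works (the stable-image description of the projection from an inverse limit holds for systems of finite dimensional vector spaces even over the uncountable directed set of finite coverings), but the paper's argument is a one-line shortcut that bypasses Mittag--Leffler entirely: if $z\neq 0$, choose any linear functional $f$ on $\check H^q(X)$ with $f(z)\neq 0$; by the surjectivity already established in part (1) there is $\gamma\in\check H_q(X)$ with $f=I(\cdot,\gamma)$, and then $\int_\gamma z=f(z)\neq 0$. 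Your approach has the virtue of being independent of part (1), while the paper's trades that independence for brevity. The finite dimensional clause is handled the same way in both.
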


The theorem is an algebraic consequence of Proposition \ref{prop:nondeg_U} and its proof is perhaps less cumbersome when formulated as such. The construction of the integral, in the abstract, can be described as follows. We have a direct system of vector spaces $\{V_i\}$ (the cohomology groups $H^q_{\mathcal{U}}(X)$) and an inverse system of vector spaces $\{W_i\}$ (the homology groups $H_q^{\mathcal{U}}(X)$) both indexed over the same set $F$. We shall denote by $\alpha_{ij} \colon V_i \to V_j$ and by $\beta_{ij} \colon W_j \to W_i$ the bonding maps of the systems and write $\alpha_i \colon V_i \to \varinjlim V_i$ and $\beta_i \colon \varprojlim W_i \to W_i$ for the canonical maps that relate each term of the system with the limit. For each index $i$ there is a bilinear form $B_i \colon V_i \times W_i \to \mathbb{K}$ which is compatible with the bonding maps $\alpha_{ij}$ and $\beta_{ij}$ in the sense that whenever $j \geq i$ the following holds:
\begin{equation} \label{eq:intertwine} B_j(\alpha_{ij}(v_i), w_j) = B_i(v_i,\beta_{ij}(w_j)) \quad \text{for every } v_i \in V_i, w_j \in W_j. \end{equation}
This is the abstract formalization of Remark \ref{rmk:scales}. Finally, the integral is a bilinear map $B \colon (\varinjlim V_i) \times (\varprojlim W_i) \to \mathbb{K}$ defined as follows: to compute $B(v,w)$ we find an index $i$ big enough so that there exists $v_i \in V_i$ such that $\alpha_i(v_i) = v$ and then set
\begin{equation}\label{eq:Binta} B(v,w) := B_i(v_i,\beta_i(w))\end{equation}
or equivalently
\begin{equation*}%\label{eq:Bintb}
B(\alpha_i(v_i),w) = B_i(v_i,\beta_i(w)).
\end{equation*}
In general this does not yield a well defined map $B$ because the right hand side depends on the index $i$, but in the case of the integral it does, see Remark \ref{rmk:scales}.

\begin{proposition}\label{prop:nondegabstract} In the setting just described, assume that the canonical maps associated to each $B_i$ are isomorphisms (and that Equation \eqref{eq:Binta} correctly defines $B$). Then the following hold:
\begin{itemize}
	\item[(1)] The canonical map \[\begin{array}{ccc} \varprojlim W_i & \to & {\rm Hom}(\varinjlim V_i,\mathbb{K}) \\ w & \longmapsto & B(\cdot,w) \end{array}\] is an isomorphism.
	\item[(2)] The canonical map \[\begin{array}{ccc} \varinjlim V_i & \to & {\rm Hom}(\varprojlim W_i,\mathbb{K}) \\ v & \longmapsto & B(v,\cdot) \end{array}\] is injective. It is an isomorphism if either one of $\varinjlim V_i$ or $\varprojlim W_i$ are finite dimensional (in which case both are).
\end{itemize}
\end{proposition}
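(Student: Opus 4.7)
\textbf{Proof plan for Proposition \ref{prop:nondegabstract}.}

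The statement is purely algebraic, so my plan is to pass to the ``world of duals.'' The nondegeneracy of each $B_i$ says that $v_i\mapsto B_i(v_i,\cdot)$ and $w_i\mapsto B_i(\cdot,w_i)$ furnish isomorphisms $V_i\cong W_i^{\ast}$ and $W_i\cong V_i^{\ast}$; composing these forces $V_i\cong V_i^{\ast\ast}$, so every $V_i$ (and every $W_i$) is finite dimensional, with $\dim V_i=\dim W_i$. Identity \eqref{eq:intertwine} is then precisely the assertion that, under these identifications, $\beta_{ij}\colon W_j\to W_i$ corresponds to the transpose $\alpha_{ij}^{\ast}\colon V_j^{\ast}\to V_i^{\ast}$, so the inverse system $\{W_i\}$ is naturally isomorphic to $\{V_i^{\ast}\}$.

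For part (1), I combine the previous identification with the standard universal isomorphism $\Hom(\varinjlim V_i,\mathbb K)\cong\varprojlim V_i^{\ast}$ to obtain a natural iso $\varprojlim W_i\cong\Hom(\varinjlim V_i,\mathbb K)$. Tracing an element $w=(w_i)\in\varprojlim W_i$ through the composition sends it to the functional whose value on $v=\alpha_i(v_i)$ is $B_i(v_i,w_i)=B_i(v_i,\beta_i(w))=B(v,w)$, i.e.\ to the canonical map of the statement; so that map is indeed an isomorphism.

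For injectivity in part (2), suppose $v=\alpha_i(v_i)$ satisfies $B(v,w)=0$ for every $w\in\varprojlim W_j$; equivalently, the functional $B_i(v_i,\cdot)\in V_i^{\ast}\cong W_i$ kills $\im\beta_i\subseteq W_i$. Because $W_i$ is finite dimensional, the nested family $\{\im\beta_{ij}\}_{j\ge i}$ is a decreasing chain of subspaces of $W_i$ and therefore stabilises to some subspace $W_i^{\infty}$; the Mittag-Leffler principle, applied to the inverse subsystem $\{W_j^{\infty}\}$ (on which the restricted bonding maps are surjective, a routine check), identifies $\im\beta_i=W_i^{\infty}$. Picking $j_0$ with $\im\beta_{ij_0}=W_i^{\infty}$, the hypothesis becomes $B_i(v_i,\beta_{ij_0}(w_{j_0}))=0$ for every $w_{j_0}\in W_{j_0}$, which by \eqref{eq:intertwine} reads $B_{j_0}(\alpha_{ij_0}(v_i),w_{j_0})=0$ for all $w_{j_0}$; nondegeneracy of $B_{j_0}$ forces $\alpha_{ij_0}(v_i)=0$, hence $v=0$ in $\varinjlim V_i$.

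For the last assertion, if either $\varinjlim V_i$ or $\varprojlim W_i$ is finite dimensional then by (1) they are each other's algebraic dual, and a vector space has finite-dimensional dual iff it is itself finite dimensional, of the same dimension; both are thus finite dimensional of equal dimension, and the injection just established is automatically an isomorphism by dimension count. The single delicate step I expect to need care is the Mittag-Leffler point in the injectivity argument, for that is where the finite dimensionality of the individual $W_i$ is genuinely used and where a choice principle (to lift elements of $W_i^{\infty}$ to $\varprojlim W^{\infty}$) enters.
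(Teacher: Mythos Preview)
Your argument is correct. For part (1) your functorial packaging (identify $\{W_i\}$ with $\{V_i^\ast\}$ via \eqref{eq:intertwine} and use $\Hom(\varinjlim V_i,\mathbb K)\cong\varprojlim V_i^\ast$) is essentially the paper's proof rewritten in categorical language: the paper builds the inverse of $w\mapsto B(\cdot,w)$ by hand, lifting a functional $f$ on $V$ to the coherent family $(w_i)$ with $B_i(\cdot,w_i)=f\circ\alpha_i$, which is exactly what your identification does elementwise. Your preliminary remark that the hypothesis already forces each $V_i$ and $W_i$ to be finite dimensional is a nice observation that the paper does not make explicit.

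For part (2) your route is genuinely different and considerably heavier. The paper derives injectivity in two lines directly from (1): given $v\neq 0$, choose any $f\in\Hom(V,\mathbb K)$ with $f(v)\neq 0$; by (1) there is $w$ with $f=B(\cdot,w)$, hence $B(v,w)\neq 0$. This avoids the Mittag--Leffler step entirely. Your approach instead shows that $B_i(v_i,\cdot)$ annihilating $\im\beta_i$ forces $\alpha_{ij_0}(v_i)=0$ for a stabilising $j_0$; this is correct, but the identification $\im\beta_i=W_i^{\infty}$ over a general directed index set is, as you flag, the one place where some care (essentially the dualisation argument that surjectivity of $\varprojlim W_j^{\infty}\to W_i^{\infty}$ follows from injectivity of $W_i^{\infty\ast}\to\varinjlim W_j^{\infty\ast}$) is needed. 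Your method buys a more ``local'' explanation of why $v$ dies in the direct limit, but the paper's trick of reusing (1) is the cleaner argument. One small slip: you wrote $B_i(v_i,\cdot)\in V_i^\ast\cong W_i$, but of course $B_i(v_i,\cdot)\in W_i^\ast$; the substance of the step is unaffected.
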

\begin{proof} For the sake of brevity we shall write $V = \varinjlim V_i$ and $W = \varprojlim W_i$.

%(1) Let us show first that $w \longmapsto B(\cdot,w)$ is injective. Assume that $B(\cdot,w)$ is the zero homomorphism, so that $B(v,w) = 0$ for every $v \in V$. By the definition of a direct limit this is equivalent to saying that $B(\alpha_i(v_i),w) = 0$ for every $v_i \in V_i$ and every $i$. Now, using Equation \eqref{eq:Bintb} we may rewrite this as $B_i(v_i,\beta_i(w)) = 0$ for every $v_i \in V_i$ and every $i$. By assumption, the canonical map $v_i \longmapsto B(v_i,\cdot)$ is injective and so $\beta_i(w) = 0$. Since this holds true for every $i$, it follows that $w = 0$.

(1) We only prove that $w \longmapsto B(\cdot,w)$ is surjective, leave injectivity to the reader. Let $f \colon V \to \mathbb{K}$ be a homomorphism. For each index $i$ define $f_i := f \circ \alpha_i$, which is a homomorphism from $V_i$ to $\mathbb{K}$. Since the canonical map $w_i \longmapsto B_i(\cdot,w_i)$ is an isomorphism by assumption, there exists a unique $w_i \in W_i$ such that $f_i(v_i) = B_i(v_i,w_i)$ for each $v_i \in V_i$. Now, for any other index $j \geq i$ we also have a homomorphism $f_j$ and an element $w_j \in W_j$ constructed in exactly the same manner and such that $f_j(v_j) = B_j(v_j,w_j)$ for every $v_j \in V_j$. We claim that $\beta_{ij}(w_j) = w_i$. Indeed, for any $v_i \in V_i$ we have \[B_i(v_i,\beta_{ij}(w_j)) = B_j(\alpha_{ij}(v_i),w_j) = f_j(\alpha_{ij}(v_i)) = f_i(v_i)\] %where in the first equality we have used Equation \eqref{eq:intertwine}, in the second one the defining property of $w_j$, and in the third one the fact that $f_j = f_i \circ \alpha_{ij}$ because $\alpha_i = \alpha_j \circ \alpha_{ij}$.
Thus $\beta_{ij}(w_j)$ also satisfies the defining property of $w_i$ and, by the uniqueness of $w_i$, we conclude that $w_i = \beta_{ij}(w_j)$ as claimed. This implies that $(w_i)_{i \in F}$ is an element of the inverse limit $W = \varprojlim W_i$ or, in other words, there exists $w \in W$ such that $\beta_i(w) = w_i$ for every $i \in F$. But then for any element $v \in V$ we may choose and index $i$ and an element $v_i \in V_i$ such that $\alpha_i(v_i) = v$, so that \[f(v) = f_i(v_i) = B_i(v_i,w_i) = B_i(v_i,\beta_i(w)) = B(v,w)\] where in the last step we have made use of Equation \eqref{eq:Binta}.

(2) This follows from part (1). For if $v \neq 0$, there is a linear map $f \colon V \to \mathbb K$ such that $f(v) \neq 0$ and by (1) $f = B( \cdot, w)$ for some $w \in W$. Since $B(v,w) = f(v) \neq 0$, $B(v, \cdot)$ is not identically zero.

\end{proof}

\begin{proof}[Proof of Theorem \ref{teo:nondeg}]
Apply Proposition \ref{prop:nondegabstract} to the bilinear pairings determined by the integral at scales defined by finite open coverings of $X$. This can be done because Proposition \ref{prop:nondeg_U} ensures that at each level $\mathcal{U}$ the integral $\int : H^q_{\mathcal{U}}(X) \times H_q^{\mathcal{U}}(X) \to \mathbb{K}$ is nondegenerate; that is, both canonical maps are isomorphisms.
The theorem follows from the fact that finite open coverings are cofinal among all open coverings of $X$.
%The assumption that $X$ is compact implies that finite coverings are cofinal in the family of all open coverings of $X$. Thus we may apply Proposition \ref{prop:nondegabstract} letting the indices $i$ run in the collection of finite coverings of $X$.

%The theorem is then a direct consequence of the preceding proposition.
\end{proof}

\section{Application: generalization of a theorem of Manning on entropy}\label{sec:manning}

In the following we assume $X$ is a compact space and $f \colon X \to X$ a continuous map. Let us recall first the definition of entropy of a map. This notion roughly computes the base of the exponential growth rate of the number of pieces of orbits that are distinguishable, in the sense that they lie at least $\epsilon$ apart. Since our discussion is purely topological, we use the definition of topological entropy by Adler, Konheim and McAndrew \cite{adler}, where proximity is interpreted in terms of the elements of an open covering. Given an open covering $\mathcal U$ of $X$ we let $s(\mathcal U)$ denote the minimum number of elements of $\mathcal U$ that covers $X$, that is, the smallest size of a subcover of $\mathcal{U}$ (which is finite by compactness). If we set $\mathcal U^n = \mathcal U \vee f^{-1}\mathcal U \vee \cdots \vee f^{-n}\mathcal U$, the limit
\[
\lim_{n \to +\infty} \frac{s(\mathcal U^n)}{n}
\]
exists and is denoted by $h(f, \mathcal U)$. It satisfies the relation: $\mathcal V$ refines $\mathcal U \to h(f, \mathcal V) \ge h(f, \mathcal U)$.

\begin{definition}
The topological entropy of $f$ is $h(f) = \sup h(f, \mathcal U)$, where the supremum ranges among all open coverings $\mathcal U$ of $X$.
\end{definition}

Of course, this definition of topological entropy is equivalent in a metric space to the definitions that use $(n, \epsilon)$-separated or spanning sets which are more common in dynamical systems \cite{katokhassel1}.

In \cite{manning} Manning proved the following theorem: if $f \colon X \to X$ is a continuous map of a compact manifold $X$, then its entropy $h(f)$ is bounded below by the logarithm of the spectral radius of $f_* : H_1(X;\mathbb{C}) \to H_1(X;\mathbb{C})$. That is, $h(f) \geq \log |\lambda|$ for any eigenvalue $\lambda$ of $f_*$. In his paper Manning actually showed that his arguments work in more general compact metric spaces. Theorem 2 in \cite{manning} states that the inequality holds as long as ``two local niceness properties'' are satisfied: the first one is slightly weaker than local path-connectedness and the second property asks for any small loop to be homotopically trivial. As pointed out in the Introduction, Manning believed that \v{C}ech cohomology was more appropriate to relate entropy to eigenvalues. This is the language employed in the generalizations of the result of Manning to arbitrary compact spaces $X$ presented below.

The original proof of Manning roughly kept track of the length of the iterates $f^n(\gamma)$ of a path $\gamma$. In our work, while there are several technical issues that arise from the possible complicated local topology of $X$, the philosophy of the proof is somehow similar. The integral plays a capital role in our argument as the measure of length is replaced by the integral of some cohomology class.

Let us state the theorems.

\begin{theorem} \label{teo:manning1} Let $X$ be compact and locally connected and $f \colon X \to X$ be continuous. Assume that $f^* \colon \check{H}^1(X;\mathbb{C}) \to \check{H}^1(X;\mathbb{C})$ has an eigenvalue $\lambda \in \mathbb{C}$ with modulus $|\lambda| > 1$. Then the topological entropy of $f$ satisfies $h(f) \geq \log |\lambda|$.
\end{theorem}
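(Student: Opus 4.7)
The plan is to mimic Manning's philosophy, using the integral as the ``length'' functional on $\check{H}_1$. Let $z \in \check{H}^1(X;\mathbb{C})$ be a nonzero eigenvector with $f^* z = \lambda z$. By Proposition \ref{prop:imagenfinita} I may represent $z$ by a cocycle $\xi$ whose image in $\mathbb{C}$ is finite, hence bounded by some constant $M$. Choose a finite open covering $\mathcal{U}$ of $X$ on which $\delta\xi$ vanishes; by local connectedness of $X$ and compactness I may further assume every member of $\mathcal{U}$ is connected. By the nondegeneracy Theorem \ref{teo:nondeg}, there is a class $\gamma \in \check{H}_1(X;\mathbb{C})$ with $\int_\gamma z \neq 0$, and by Proposition \ref{prop:scc} together with the surjectivity $\check{H}_1(X) \to H^{\mathcal{U}}_1(X)$ from Section \ref{sec:1dim}, I can represent $\gamma^{\mathcal{U}}$ by a $\mathcal{U}$-small cycle $c_0$ of norm bounded by some $L$ depending only on $\gamma$ and $\mathcal{U}$.

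The core of the argument is the iterated integral identity
\[
\lambda^n \int_\gamma z \;=\; \int_\gamma (f^*)^n z \;=\; \int_{f^n_*\gamma} z,
\]
where the second equality is Lemma \ref{lem:integral_map}. To evaluate the rightmost integral at scale $\mathcal{U}$, I need a $\mathcal{U}$-small representative of $(f^n_*\gamma)^{\mathcal{U}}$, which can be taken of the form $f^n_\sharp c_n$ for any $\mathcal{U}^n$-small representative $c_n$ of $\gamma^{\mathcal{U}^n}$ (where $\mathcal{U}^n = \mathcal{U} \vee f^{-1}\mathcal{U} \vee \cdots \vee f^{-n}\mathcal{U}$ refines $f^{-n}\mathcal{U}$, so $f^n_\sharp$ carries $\mathcal{U}^n$-small chains to $\mathcal{U}$-small ones). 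The key quantitative step is to construct such a $c_n$ with $\|c_n\|_1 \leq K\cdot s(\mathcal{U}^n)$ for a constant $K$ independent of $n$. Once this is in hand, the bound $|\xi(f^n_\sharp c_n)| \leq M\|c_n\|_1$ combined with the identity above yields
\[
|\lambda|^n \,\Bigl|\!\int_\gamma z\Bigr| \;\le\; M K\, s(\mathcal{U}^n),
\]
so $s(\mathcal{U}^n) \geq C|\lambda|^n$ for a positive constant $C$, hence $h(f,\mathcal{U}) \geq \log|\lambda|$ and therefore $h(f) \geq \log|\lambda|$.

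The linchpin is thus the construction of $c_n$. The idea is to apply Lemma \ref{lem:count_refine} to $c_0$, refining each of its simplices inside its containing connected element of $\mathcal{U}$ so that every piece of the refinement sits in a member of a minimal subcover of $\mathcal{U}^n$; this produces a $\mathcal{U}^n$-small chain homologous to $c_0$ \emph{at scale $\mathcal{U}$} of norm at most $L \cdot s(\mathcal{U}^n)$. The delicate point, which I view as the main obstacle, is that Lemma \ref{lem:count_refine} only asserts homology in $H^{\mathcal{U}}_1$, whereas I need $c_n$ to represent the specific class $\gamma^{\mathcal{U}^n}$ in $H^{\mathcal{U}^n}_1$ so that $f^n_\sharp c_n$ truly represents $(f^n_*\gamma)^{\mathcal{U}}$. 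I expect this to be resolved by choosing the lift $\gamma \in \check{H}_1$ coherently along with its refinements: starting from $c_0$ and iterating Lemma \ref{lem:count_refine} through a cofinal sequence of coverings refining $\mathcal{U}^n$ for all $n$, one assembles a compatible family $(c_n)$ defining $\gamma$ as an element of $\varprojlim H^{\mathcal{U}^n}_1$, so that by construction $[c_n]_{\mathcal{U}^n} = \gamma^{\mathcal{U}^n}$ and the norm bound is preserved. After this careful setup, the remaining inequalities are mechanical, the boundedness of $\xi$ from Proposition \ref{prop:imagenfinita} being decisive for the final estimate.
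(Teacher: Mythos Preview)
Your overall strategy matches the paper's, but the ``delicate point'' you flag is a genuine gap, and your proposed resolution does not close it. Iterating Lemma~\ref{lem:count_refine} to build a coherent family $(c_n)$ in $\varprojlim H_1^{\mathcal{U}^n}$ fails for two reasons. First, the lemma requires the \emph{source} covering to consist of connected sets, but the coverings $\mathcal{U}^n = \mathcal{U}\vee f^{-1}\mathcal{U}\vee\cdots\vee f^{-n}\mathcal{U}$ need not have connected members even when $\mathcal{U}$ does (preimages and intersections of connected sets are not generally connected), so you cannot refine from $c_n$ to $c_{n+1}$. Second, if instead you always refine from $c_0$ at scale $\mathcal{U}$, you obtain each $c_n$ with the correct norm bound and $[c_n]_{\mathcal{U}}=[c_0]_{\mathcal{U}}$, but there is no reason for $[c_{n+1}]_{\mathcal{U}^n}=[c_n]_{\mathcal{U}^n}$, so the family is not coherent and does not define an element of $\check{H}_1$. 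Either way you lose one of the two properties you need.

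The paper avoids the issue altogether by never invoking $\check{H}_1$. The missing step in your setup is the paper's Step~1: after choosing $\mathcal{U}$, one refines it further so that the eigenvector equation $f^*z=\lambda z$ already holds \emph{at scale $\mathcal{U}$}, i.e.\ in $H^1_{\mathcal{U}}(X;\mathbb{C})$. Once this is arranged, take $\gamma=[s_0]\in H_1^{\mathcal{U}}(X)$ with $\int_{\gamma}z\neq 0$ using Proposition~\ref{prop:nondeg_U} (nondegeneracy at scale $\mathcal{U}$, not Theorem~\ref{teo:nondeg}). For each $n$ produce $s_n$ by a single application of Lemma~\ref{lem:count_refine} from $s_0$; you only get $[s_n]_{\mathcal{U}}=[s_0]_{\mathcal{U}}$, but that is now enough: since the eigenvector equation holds at scale $\mathcal{U}$, the identity $\int_{[f^n_\sharp s_n]_{\mathcal{U}}}z=\lambda^n\int_{[s_n]_{\mathcal{U}}}z=\lambda^n\int_{\gamma}z$ follows directly (this is the paper's Step~2), and the rest of your estimate goes through verbatim. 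In short, the obstacle you identified disappears once you localize the eigenvector equation to scale $\mathcal{U}$ and work there exclusively.
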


Notice that this generalizes the classical Manning's inequality, since for a manifold one has $\check{H}^1(X;\mathbb{C}) = H^1(X;\mathbb{C})$ and for a compact manifold the latter is finite dimensional and isomorphic to the dual of $H_1(X;\mathbb{C})$.  

When $X$ is not locally connected the lower bound we obtain is smaller than the expected $\log |\lambda|$ but is still positive, so it ensures that $h(f) > 0$:

\begin{theorem} \label{teo:nmanning} Let $X$ be a compact space and $f \colon X \to X$ a continuous map. Assume that $f^* \colon \check{H}^1(X;\mathbb{C}) \to \check{H}^1(X;\mathbb{C})$ has an eigenvalue $\lambda \in \mathbb{C}$ with modulus $|\lambda| > 1$. Then $h(f) \ge (\mathrm{log}|\lambda|)/d$, where $d \in \mathbb Z^+$ is the degree of the algebraic number $\lambda$. In particular, $h(f) > 0$.
\end{theorem}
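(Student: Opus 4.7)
The plan is to follow the template of Theorem \ref{teo:manning1}, replacing the chain refinement step (which relied on local connectedness via Lemma \ref{lem:count_refine}) with an algebraic argument built around the minimal polynomial of $\lambda$. Let $z \in \check{H}^1(X;\mathbb{C})$ be an eigenvector of $f^*$ with eigenvalue $\lambda$. By Theorem \ref{teo:nondeg}, pick $\gamma \in \check{H}_1(X;\mathbb{C})$ with $\int_\gamma z = 1$. Lemma \ref{lem:integral_map} then gives
\[
\int_{f^n_*\gamma} z \;=\; \int_\gamma (f^*)^n z \;=\; \lambda^n.
\]
Fix a finite open covering $\mathcal{U}$ of $X$ at which $z$ is represented by a cocycle $\xi$; by Proposition \ref{prop:imagenfinita} we may take $\xi$ with finite image, so $M := \|\xi\|_\infty < \infty$. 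Set $\mathcal{U}^n := \mathcal{U} \vee f^{-1}\mathcal{U} \vee \cdots \vee f^{-n}\mathcal{U}$, and pick any $\mathcal{U}^n$-small representative $c_n$ of $\gamma$; then $f^n_\sharp c_n$ is $\mathcal{U}$-small and represents $f^n_*\gamma$, so
\[
|\lambda|^n \;=\; |\xi(f^n_\sharp c_n)| \;\le\; M\,\|c_n\|_1. \qquad (\star)
\]
Everything reduces to bounding $\|c_n\|_1$ by a controlled function of $s(\mathcal{U}^n)$.

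In the locally connected case, Lemma \ref{lem:count_refine} gives $\|c_n\|_1 \lesssim s(\mathcal{U}^n)$, recovering Theorem \ref{teo:manning1} directly. The main obstacle in the general case is that this refinement is not available. The plan to circumvent it is to exploit that $\lambda$ satisfies a monic polynomial $p(t) \in \mathbb{Z}[t]$ of degree $d$. The identity $p(f^*)z = 0$ in $\check{H}^1(X;\mathbb{C})$ lifts, after refining to a sufficiently fine covering $\mathcal{V}$ (which may be taken of the form $\mathcal{U}^N$ for $N$ large), to a cocycle-level equation $p(f^\sharp)\xi = \delta\eta$ with some auxiliary $0$-cochain $\eta$, again of finite image by Proposition \ref{prop:imagenfinita}. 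Pairing this identity with an arbitrary $\mathcal{V}$-small cycle produces a linear recurrence of order $d$ among the numbers $\xi(f^k_\sharp\,\cdot\,)$, mirroring the recurrence $\lambda^{n+d} = -c_{d-1}\lambda^{n+d-1} - \cdots - c_0 \lambda^n$ satisfied by the sequence $\lambda^n$.

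The intended use of this recurrence is to build, inductively in $n$, $\mathcal{U}^n$-small representatives $c_n$ of $\gamma$ whose norms satisfy an estimate of the form $\|c_n\|_1 \lesssim s(\mathcal{U}^n)^d$, uniformly in $n$. Heuristically, every block of $d$ consecutive iterations of $f$ may then be handled at the cost of a single factor of $s(\mathcal{U}^n)$ rather than one factor per iteration, which is exactly what produces the extra exponent $d$ in the bound. The finite-dimensionality of $H_1^{\mathcal{U}^n}(X;\mathbb{C})$ guaranteed by Remark \ref{rem:f_gen} is what makes the inductive construction feasible, but it cannot produce the required bound by itself: the algebraic recurrence coming from $p(f^\sharp)\xi = \delta\eta$ is essential to control how the representatives transform under iteration. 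Plugging the eventual estimate into ($\star$), taking logarithms, dividing by $n$, passing to the limit, and letting $\mathcal{U}$ run over all finite open coverings of $X$ gives
\[
\log|\lambda| \;\le\; d \cdot h(f,\mathcal{U}) \;\le\; d \cdot h(f),
\]
which is the announced inequality.

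The hard part of the proof, and the main technical obstacle, is precisely the passage from the limit-level identity $p(f^*)z = 0$ in $\check{H}^1(X;\mathbb{C})$ to a covering-level cochain equation $p(f^\sharp)\xi = \delta\eta$ on which one can perform explicit norm estimates on the chains $c_n$. Two subtleties must be handled: (i) choosing a covering at which the equation first becomes valid together with a controlled choice of $\eta$, and (ii) iterating the recurrence without losing control of the norms of the auxiliary chains that appear. Once this cochain-level bookkeeping is in place, the exponent $d$ arises naturally as the order of the recurrence, and the rest of the argument is identical to the locally connected case.
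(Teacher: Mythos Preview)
Your plan has a genuine gap at the decisive step: the construction of $\mathcal{U}^n$--small representatives $c_n$ of a \emph{fixed} class $\gamma$ with $\|c_n\|_1 \lesssim s(\mathcal{U}^n)^d$. The cochain identity $p(f^{\sharp})\xi = \delta\eta$ you propose gives, after pairing with a sufficiently small cycle $c$, a linear recurrence among the numbers $\xi(f^k_{\sharp}c)$; but these numbers are just $\int_{f^k_*[c]} z$, and when $[c]$ projects to the fixed $\gamma$ this recurrence reproduces nothing more than the known identity $p(\lambda)=0$. It says nothing about the chain--level norm $\|c_n\|_1$. The obstruction in the non--locally connected case is precisely that there is no mechanism to refine a given $\mathcal{U}$--small cycle to a $\mathcal{U}^n$--small one without losing control of the norm, and a recurrence living on the \emph{cochain} side cannot manufacture such a mechanism on the \emph{chain} side. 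Your heuristic that ``each block of $d$ iterations costs one factor of $s(\mathcal{U}^n)$'' is not supported by anything in the setup. (There is also a minor slip: $\lambda$ need not be an algebraic integer, so there is no monic $p\in\mathbb{Z}[t]$ of degree $d$ in general.)

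The paper's route is genuinely different and avoids this problem altogether. It does \emph{not} fix a single $\gamma$; instead, for each $n$ it chooses a fresh $\mathcal{V}_n$--small \emph{simple elementary} cycle $s_n$ with $\int_{[s_n]} z \neq 0$ (such cycles generate $H_1^{\mathcal{V}_n}$ by Proposition \ref{prop:scc}, and their norm is automatically $\le |\mathcal{V}_n|$). The price is that $\int_{[s_n]} z$ is no longer equal to $1$, and one needs a \emph{lower} bound on it. This is obtained by first arranging the eigenvector to have the special form $z = \sum_{j=0}^{d-1}\mu_j w_j$ with $w_j$ rational classes and $\mu_j$ algebraic (Lemma \ref{lem:algebraiccombination}), and then invoking Schmidt's subspace theorem to get a Diophantine inequality $|\int_{[s_n]} z| \ge B\,\|s_n\|_1^{-(d-1+\epsilon)}$ whenever the integral is nonzero (Lemma \ref{lem:diophantine}). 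Combining this with the trivial upper bound $|\lambda|^n|\int_{[s_n]}z| \le \|s_n\|_1\|\eta\|_\infty$ and $\|s_n\|_1 \le |\mathcal{V}_n|$ yields $|\lambda|^n \lesssim |\mathcal{V}_n|^{d+\epsilon}$, hence the bound. So the exponent $d$ enters through Diophantine approximation, not through any chain--level recursion; if you want to salvage your outline you would need an entirely different idea to control $\|c_n\|_1$.
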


Since the degree of rational numbers is 1, the theorem yields the standard bound of $\mathrm{log}|\lambda|$ for eigenvalues in $\mathbb Q$.

%It is possible to give an explicit lower bound on $h(f)$; namely \[h(f) \geq \frac{\log |\lambda|}{d^3+2d+1}\] where $d$ is a certain natural number that depends only on $\lambda$ and whose meaning we shall explain later. It is clear from the structure of the formula that this lower bound is always worse than the usual one $\log |\lambda|$, and this is the penalty for the increase in generality. However, the bound is still sufficient to ensure a positive entropy when $|\lambda| > 1$, which is often enough in applications. 

The proofs of the two theorems have a common initial part given in Subsection \ref{subsec:common} below. After that their proofs diverge. That of Theorem \ref{teo:manning1} (the locally connected case) is quite straightforward using the machinery already developed and is given in Subsection \ref{subsec:final}. The proof of Theorem \ref{teo:nmanning} is more involved and so we devote a full section (Section \ref{sec:nmanning}) to it. Before embarking on this, however, we explain why the theorems involve eigenvalues in cohomology instead of homology.

\subsection{Why are the statements formulated in cohomological terms?}
Another aspect in which Theorems \ref{teo:manning1} and \ref{teo:nmanning} differ from the original result of Manning is that they assume $\lambda$ to be an eigenvalue in (\v{C}ech) cohomology rather than homology. When $\check{H}^1(X;\mathbb{C})$ or $\check{H}_1(X;\mathbb{C})$ are finite dimensional these two spaces are dual to each other (see Theorem \ref{teo:nondeg}) and so one can equivalently assume $\lambda$ to be an eigenvalue in homology. However, as proved earlier in general \v{C}ech homology is isomorphic to the dual of \v{C}ech cohomology, so there might exist eigenvalues in homology which are not present in cohomology and violate Manning's inequality. We now describe an example of compact, locally connected metric space $X$ where this phenomenon occurs.

The space $X$ is shown in Figure \ref{fig:counter1}. It consists of a biinfinite sequence of circumferences $\{C_i\}_{i \in \mathbb{Z}}$ labeled from left to right (with $C_0$ being the biggest one in the middle) and two limiting points $L$ and $R$ at both ends of the sequence.

\begin{figure}[ht]\label{fig:counter1}
\centering
\includegraphics[scale = 0.75]{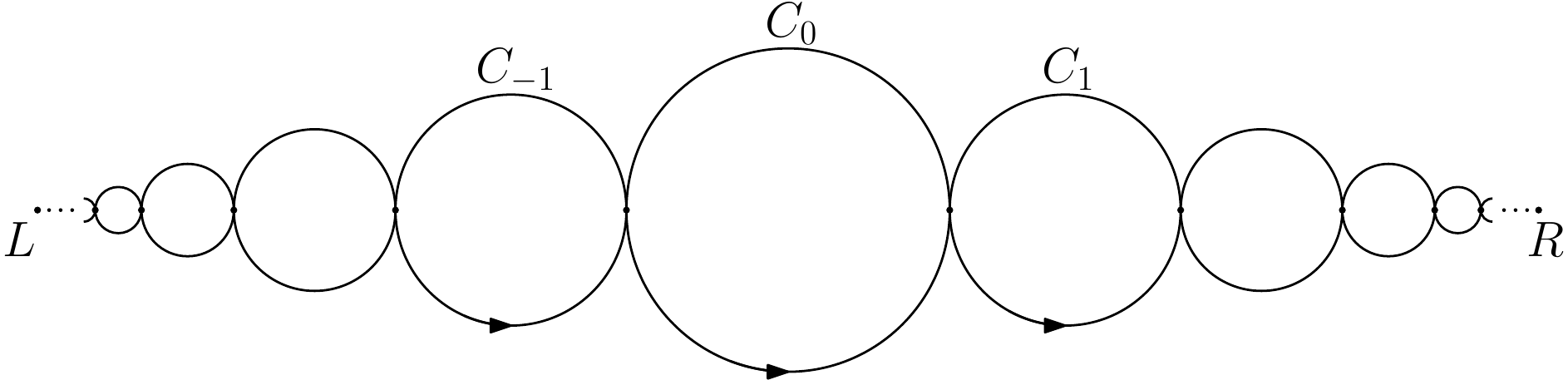}
\caption{}
\label{fig:counter1}
\end{figure}

%In order to describe the map $f : X \to X$ it is convenient to write each $C_i$ as the union of two oriented arcs $a_i$ and $b_i$; these being the upper and lower half of $C_i$ oriented as shown in the figure.
The map $f \colon X \to X$ is defined as follows:
\begin{itemize}
	\item[(i)] It fixes the endpoints $L$ and $R$. Other than that, $f$ sends each point $C_{i+1} \cap C_i$ to $C_i \cap C_{i-1}$.
	\item[(ii)] $f$ sends each oriented (as in the figure) $C_i$ onto a curve that goes around $C_{i-1}$ twice.
%	\item[(ii)] $f$ sends each arc $a_i$ onto the circumference $C_{i-1}$ in such a way that $f(a_i)$ travels counterclockwise around $C_{i-1}$ one and a half times. More precisely, $f$ sends $a_i$ to the concatenation $a_{i-1}b_{i-1}a_{i-1}$.
%	\item[(iii)] $f$ sends each arc $b_i$ homeomorphically onto $b_{i-1}$ (preserving their orientation).
\end{itemize}

Notice that conditions (i) and (ii) are compatible but force $f$ to behave differently in the upper and lower arcs.
%Notice that conditions (ii) and (iii) are indeed compatible with (i). Their joint effect is that $f$ sends each oriented $C_i$ onto a curve that goes around $C_{i-1}$ twice.
Now if $\gamma \in \check{H}_1(X;\mathbb{C})$ is the ``sum'' $\sum_i C_i$, we see that $f_*(\gamma) = 2 \gamma$, so that $\lambda = 2$ is an eigenvalue of $f_*$.

The computation of the entropy is straightforward as it can be circumscribed to the non--wandering set, which in this case is just $\{L,R\}$, an attractor-repeller decomposition of $X$. (For a discussion and proof of this fact see \cite[p. 130ff.]{katokhassel1}). This implies that $h(f) = h(f|_{\{L,R\}}) = 0$, so Manning's inequality fails to be true. In cohomology one sees easily that $f^*$ has no eigenvalues and so Theorem \ref{teo:manning1} holds vacuously.

Another way of manufacturing counterexamples to the homological version of the theorem, albeit non locally connected ones, is to exploit the fact that Manning's inequality may be strongly violated in dimension zero. As an illustration let us take \cite[Corollary 3]{ergodic1}: if $Z$ is a compact, metric, totally disconnected space and $g \colon Z \to Z$ is a continuous map having a nonzero topological entropy, then every $\lambda \in \mathbb{C}$ with $|\lambda| \neq 0,1$ is an eigenvalue of $g_* \colon \check{H}_0(Z;\mathbb{C}) \to \check{H}_0(Z;\mathbb{C})$. In particular if $g$ has a finite entropy then the inequality $h(g) \geq \log |\lambda|$ for every eigenvalue $\lambda$ is manifestly false. This is the case, for example, of Smale's horseshoe with its usual dynamics which has an entropy $h(g) = \log 2$ (see \cite[2.5c and p. 121]{katokhassel1}).

Let $SZ$ be the suspension of $Z$. From the fact that the Mayer-Vietoris sequence for \v{C}ech homology with coefficients in a field is exact \cite{Eilenberg_and_Steenrod_1952_A}, it can be easily deduced that $\check{H}_1(SZ;\mathbb{C})$ is isomorphic to $\check{H}_0(Z;\mathbb{C})$, where reduced $0$--dimensional homology is intended. Then, we can plug the dynamics of $g$ in the base level $Z \times \{0\}$ of $SZ$ and extend it to a dynamics on $f \colon SZ \to SZ$ that fixes the poles $N, S$, preserves the upper and lower cones and makes $(Z \times \{0\}, \{N, S\})$ an attractor-repeller decomposition. As above, the entropy of $f$ is readily seen to be equal to $h(g)$. However, the maps $f_*$ and $g_*$ induced in $\check{H}_1(SZ;\mathbb{C})$ and $\check{H}_0(Z;\mathbb{C})$ are conjugate by $\Delta$, the connecting homomorphism of the Mayer-Vietoris sequence associated to the decomposition of $SZ$ in upper and lower cones.  In particular both maps have the same eigenvalues, which shows that Manning's inequality fails for $f$ since it fails (by assumption) in dimension zero for $g$.

\subsection{Proof of Theorems \ref{teo:manning1} and \ref{teo:nmanning}} \label{subsec:common}

The first steps are common and serve as an outline of both proofs. The argument starts with a nonzero eigenvector $z$ in $\check{H}^*(X; \mathbb C)$ of eigenvalue $\lambda$ that is arbitrary in the proof of Theorem \ref{teo:manning1} but needs to be carefully selected (in a manner to be described in the next section) for the proof of Theorem \ref{teo:nmanning}.

\textbf{Step 1}. 
Let $z \in \check{H}^1(X; \mathbb C)$ be an eigenvector of eigenvalue $\lambda$, i.e. a solution of $f^*z = \lambda z$ (*), and $\mathcal V$ an open covering of $X$ such that $z = \pi_{\mathcal V}(z_{\mathcal V})$ for some $z_{\mathcal V} \in H^1_{\mathcal V}(X; \mathbb C)$. Since the cohomology classes $\lambda z_{\mathcal V}$ and $f^*z_{\mathcal V} \in H^*_{f^{-1}\mathcal V}(X; \mathbb C)$ define the same element in the direct limit $\check{H}^1(X; \mathbb C)$, there exists some open covering $\mathcal U$, finer than $\mathcal V \vee f^{-1}\mathcal V$, for which the projections of $\lambda z_{\mathcal V}$ and $f^* z_{\mathcal V}$ to $H^1_{\mathcal U}(X; \mathbb C)$ are equal. In other words, $z_{\mathcal V}$ (or, formally, $\pi_{\mathcal V \mathcal U}(z_{\mathcal V})$) is a solution of (*) at scale $\mathcal U$. Note that $\mathcal U$ can be chosen arbitrarily fine and will henceforth be fixed.

\textbf{Step 2}. 
Fix a natural number $n$ and denote $\mathcal V_n$ a finite subcover of $\mathcal U \vee f^{-1} \mathcal U \vee \cdots \vee f^{-n} \mathcal U$. For any given $\mathcal V_n$--small 1--cycle $s_n$, the cycles $s_n, f_{\sharp}s_n, \ldots, f_{\sharp}^ns_n$ are $\mathcal U$--small and represent homology classes $\gamma_n, f_*\gamma_n, \ldots, f^n_*\gamma_n \in H_1^{\mathcal U}(X)$, respectively. For every $0 \le j < n$

\[
\int_{f^{j+1}_*\gamma_n} z = \int_{f^j\gamma_n} f^*z = \lambda \int_{f^j\gamma_n} z,
\]
where the middle term shall be interpreted as an integral at scale $f^{-1} \mathcal U$, whereas the first and last belong to scale $\mathcal U$ and we use the properties of the integral formulated in Remark \ref{rmk:scales} and Lemma \ref{lem:integral_map}. By induction,
\begin{align}\label{eq:induccion}
\int_{f^n_*\gamma_n} z = \lambda^n \int_{\gamma_n} z
\end{align}

\textbf{Step 3}. By Corollary \ref{prop:imagenfinita}, there exists a representative $\eta$ of $z$ whose image is finite. Then,
\begin{align}\label{eq:bound}
\left| \int_{f^n_*\gamma_n} z \right| = \left| \int_{[f_{\sharp}^n s_n]} [\eta] \right| \le ||f_{\sharp}^n s_n||_1 ||\eta||_{\infty} \le ||s_n||_1 ||\eta||_{\infty}.
\end{align}
where the first inequality is a consequence of the definition of the integral as a sum of eva\-luations over simplices and the second inequality accounts for the possible alteration of the norm of the chains due to cancellation of simplices after applying $f_{\sharp}^n$ to the chain $s_n$. 

\medskip

At this point the arguments needed to address Theorems \ref{teo:manning1} and \ref{teo:nmanning} diverge.
The idea is to bound $||s_n||_1$ from below in terms of $|\mathcal V_n|$ in (\ref{eq:bound}) and control the integral of $z$ along $\gamma_n$ in (\ref{eq:induccion}) at the same time. This is easy in the locally connected case using the refinements constructed in page \pageref{pg:refine} because the value of the integral does not depend on $n$. However, in the general case there is no easy and coherent choice of $\gamma_n$ and $s_n$ and the argument is more delicate. The idea is to use some arithmetical properties of $z$ to deduce a lower bound for the absolute value of the integral in terms of $|\mathcal V_n|$.

%The missing steps are: to find suitable cycles $s_n$ with controlled norm and to bound the integral on the right hand side term of (\ref{eq:induccion}) and combine it with (\ref{eq:bound}) to conclude that $|\mathcal V_n|$ grows at least exponentially. Next, we see that in the locally connected case the mentioned bound can be made not to depend on $n$.

\subsection{Final part of the proof of Theorem \ref{teo:manning1}} \label{subsec:final}
Using the local connectedness of $X$, after passing to a refinement we can assume that the open covering $\mathcal U$ is composed of connected sets. Then, we can use the arguments in page \pageref{pg:refine} to choose wisely the cycles $s_n$. Take a $\mathcal U$--small 1-cycle $s_0$ that defines a class $\gamma = [s_0] \in H_1^{\mathcal U}(X)$ such that $\int_{\gamma}z \neq 0$. This exists by Proposition \ref{prop:nondeg_U}. For every $n$, let $s_n$ be the $\mathcal V_n$--small refinement of $s_0$ produced by Lemma \ref{lem:count_refine}, which satisfies $||s_n||_1 \le ||s_0||_1|\mathcal V_n|$. Since $s_n$ and $s_0$ are homologous as $\mathcal U$--small cycles,
\[
\int_{\gamma_n} z = \int_{\gamma}z
\]
and then Equation (\ref{eq:bound}) yields
\[
|\mathcal V_n| \ge C \, |\lambda|^n \quad \text{where} \quad C = \frac{1}{||\eta||_{\infty} ||s_0||_1}\left|\int_{\gamma}z\right|
\]
It is then clear that $\lim_{n \to +\infty} \frac{1}{n} \log|\mathcal V_n| \ge \log|\lambda|$ and, since $\mathcal V_n$ was a subcover of $\mathcal U \vee f^{-1} \mathcal U \vee \cdots \vee f^{-n} \mathcal U$ it follows that $h(f, \mathcal U) \ge \log |\lambda|$. Thus, we conclude that $h(f) \ge \log |\lambda|$. \hfill \qed

\section{Proof of Theorem \ref{teo:nmanning}} \label{sec:nmanning}

The case in which $X$ is not locally connected is more difficult as it is not clear whether there is a choice of cycles $s_n$ that allows to control the right hand side of (\ref{eq:induccion}) and the norm of $s_n$ at the same time. The argument we provide is based on a careful inspection of $z$ and $s_n$. Although the eigenvector $z$ is a cohomology class in $H^1_{\mathcal U}(X; \mathbb C)$, we will prove that we may assume $z$ to be a linear combination of rational cohomology classes with algebraic coefficients. Then, we benefit from the nice arithmetic properties of algebraic numbers to obtain a Diophantine lemma that bounds from below the absolute value of the integral of $z$ along $\gamma_n = [s_n]$ (provided it is non-zero) in terms of the norm of $s_n$. Finally, we have to choose $s_n$ in a way that its norm is controlled by the size of $\mathcal V_n$ and that guarantees that the value of the integral of $z$ does not vanish.

The preliminary work for the choice of $s_n$ has already been done in Section \ref{sec:1dim}, as it is enough to pick a suitable simple elementary cycle. However, we still have to go through some technical lemmas to justify the arithmetic properties of $z$. %The following subsection shows that we can assume that $z$ is a linear combination of rational cohomology classes with algebraic coefficients. Afterwards, we state the Diophantine lemma that is key to the proof.

%We need several preparatory lemmas that are fairly distinct in nature. The first one is essentially geometric \textcolor{red}{WHAT??} and somewhat similar to one of the steps in Manning's proof \cite{manning}. The second one is actually number theoretic: it is a Diophantine result which makes use of (a generalization of) the result of Liouville that roughly states that irrational algebraic numbers are badly approximated by rational numbers.

\subsection{Remarks about coefficients. Selection of eigenvector}

From now on we are going to make use simultaneously of coefficients in $\mathbb{Z}$, $\mathbb{Q}$ and $\mathbb{C}$, so these will always be reflected in the notation.
Observe first that a homology (or cohomology, for which the remark also applies verbatim) class $\gamma$ with coefficients in $G$ can be also regarded as a class with coefficients in any larger group $G'$, for example the case of $\mathbb Z$ or $\mathbb Q$ and $\mathbb C$. In view of the definition of the integral this subtlety does not affect the computation as a representative $c$ of $\gamma$ with coefficients in $G$ is also a representative for the class with coefficients in $G'$.

 We also need a brief discussion on the relationship between $H^1_{\mathcal{U}}(X;\mathbb{Q})$ and $H^1_{\mathcal{U}}(X;\mathbb{C})$. By the universal coefficient theorem \cite{spanier1} (and using that $\mathcal{U}$ is finite and so all modules are of finite type), $H^q_{\mathcal{U}}(X;\mathbb{Q}) \otimes \mathbb{C} \cong H^q_{\mathcal{U}}(X;\mathbb{C})$ as $\mathbb Q$--vector spaces, where the isomorphism is given on the generators of the tensor product by $[\xi] \otimes \lambda \longmapsto \lambda [\xi]$. %As in the previous isomorphism, throughout this discussion the tensor product is taken by default over $\mathbb Z$ or, equivalently, over $\mathbb Q$.

Suppose $f \colon X \to X$ is a continuous map. Let us denote by $f^*_{\mathbb{Q}}$ and $f^*_{\mathbb{C}}$ the endomorphisms induced by $f$ in $\check{H}^1(X;\mathbb{Q})$ and $\check{H}^1(X;\mathbb{C})$, respectively, and denote by $P_{\lambda}(t) \in \mathbb Q[t]$ the minimal polynomial of $\lambda$ over $\mathbb Q$ in the sense of field extensions.

\begin{lemma} \label{lem:reduceQ} If $f^*_{\mathbb{C}}$ has an eigenvalue $\lambda \in \mathbb{C}$, then there exists a nonzero $w \in \check{H}^1(X;\mathbb{Q})$ whose minimal polynomial for $f^*_{\mathbb{Q}}$ is well defined and equals $P_{\lambda}$.
\end{lemma}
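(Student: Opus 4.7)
The plan is to exploit the identification $\check{H}^1(X;\mathbb{Q}) \otimes_{\mathbb{Q}} \mathbb{C} \cong \check{H}^1(X;\mathbb{C})$ recalled just before the lemma, under which $f^*_{\mathbb{C}}$ corresponds to $f^*_{\mathbb{Q}} \otimes \mathrm{id}_{\mathbb{C}}$ by naturality of induced maps with respect to the coefficient group. All the real content is then a short flatness-plus-irreducibility argument.

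First I would let $z \in \check{H}^1(X;\mathbb{C})$ be an eigenvector for $f^*_{\mathbb{C}}$ of eigenvalue $\lambda$ and observe that
\[
P_{\lambda}(f^*_{\mathbb{C}})(z) = P_{\lambda}(\lambda)\, z = 0,
\]
so $z$ lies in $\ker P_{\lambda}(f^*_{\mathbb{C}})$. Writing $S := P_{\lambda}(f^*_{\mathbb{Q}})$, one has $P_{\lambda}(f^*_{\mathbb{C}}) = S \otimes \mathrm{id}_{\mathbb{C}}$. Because $\mathbb{C}$ is flat over $\mathbb{Q}$ (every $\mathbb{Q}$-module is flat), tensoring the exact sequence $0 \to \ker S \to \check{H}^1(X;\mathbb{Q}) \stackrel{S}{\to} \check{H}^1(X;\mathbb{Q})$ with $\mathbb{C}$ gives
\[
\ker P_{\lambda}(f^*_{\mathbb{C}}) = (\ker S) \otimes_{\mathbb{Q}} \mathbb{C}.
\]

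Since $z$ is a nonzero element of $(\ker S) \otimes_{\mathbb{Q}} \mathbb{C}$, the $\mathbb{Q}$-vector space $\ker S$ is itself nonzero, so I can pick any nonzero $w \in \ker S \subseteq \check{H}^1(X;\mathbb{Q})$. By construction $P_{\lambda}(f^*_{\mathbb{Q}})(w) = 0$, so the set of polynomials annihilating $w$ under $f^*_{\mathbb{Q}}$ is nonzero, hence the minimal polynomial of $w$ for $f^*_{\mathbb{Q}}$ is well defined and is a monic divisor of $P_{\lambda}$. But $P_{\lambda}$ is irreducible over $\mathbb{Q}$ (it is the minimal polynomial of the algebraic number $\lambda$), and the only other monic divisor, namely $1$, is ruled out because $w \neq 0$. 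Therefore the minimal polynomial of $w$ equals $P_{\lambda}$, as required.

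There is no substantial obstacle: the argument is purely linear algebraic once the natural identification $f^*_{\mathbb{C}} \leftrightarrow f^*_{\mathbb{Q}} \otimes \mathrm{id}$ is in place. The only point worth double-checking is that $\ker$ commutes with the extension of scalars $-\otimes_{\mathbb{Q}}\mathbb{C}$, which is why flatness is invoked; this is automatic over a field and would be the one line to spell out explicitly.
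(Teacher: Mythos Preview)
Your proof is correct and, in fact, cleaner than the paper's. The paper argues by hand: it takes an eigenvector $v$, finds a finite-dimensional $\mathbb{Q}$-subspace $F$ of minimal dimension with $v \in \widehat{F}$, shows $g(F)=F$ by minimality, lets $P_F$ be the minimal polynomial of $g|_F$, factors $P_F = Q\cdot P_\lambda$, and then picks $w$ in the image of $Q(g)|_F$. Your route---observe $z \in \ker P_\lambda(f^*_{\mathbb{C}})$, use flatness of $\mathbb{C}$ over $\mathbb{Q}$ to identify this kernel with $(\ker P_\lambda(f^*_{\mathbb{Q}}))\otimes\mathbb{C}$, pick any nonzero $w$ in the rational kernel, and invoke irreducibility of $P_\lambda$---is more direct and avoids the minimality manoeuvre entirely.

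Two small remarks. First, the identification you cite is stated in the paper only at scale $\mathcal{U}$, not for $\check{H}^1$; passing to the limit is harmless because direct limits commute with $-\otimes_{\mathbb{Q}}\mathbb{C}$, but you should say so. Second, and more substantively, your argument silently assumes $\lambda$ is algebraic when you form $P_\lambda$. The paper's proof actually \emph{establishes} this as a byproduct (via $P_F(\lambda)=0$ for the rational polynomial $P_F$), and this algebraicity is used later in Theorem~\ref{teo:nmanning}. It is easy to fill in---write the eigenvector as a finite sum $\sum u_i\otimes c_i$ and read off from the eigenvalue equation that $\lambda$ is an eigenvalue of a finite rational matrix---but it is worth making explicit.
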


\begin{proof}
Since the lemma is purely algebraic we shall give a proof that works in a more general setting. Let $E$ be an abstract $\mathbb Q$-vector space and $g \colon E \to E$ an endomorphism of $E$. The $\mathbb C$-vector space $E \otimes \mathbb C$ is generated by the elements of the form $u \otimes 1$, $u \in E$ and the map $g$ induces an endomorphism $\hat g \colon E \otimes \mathbb C \to E \otimes \mathbb C$ which acts on the generators as $\hat g(u \otimes 1) = g(u) \otimes 1$. We view $E \otimes \mathbb{C}$ as a $\mathbb C$--vector space with the complex multiplication absorbed by the second factor: $\nu \cdot u \otimes \mu = u \otimes \nu\mu$. In our setting $g$ corresponds to $f^*_{\mathbb Q}$ and $\hat g$ to the conjugation of $f^*_{\mathbb C}$ by the isomorphism of the universal coefficient theorem.

Any subspace $F \subset E$ spanned by a finite collection $\{u_j\}$ of linearly independent vectors has an associated ($\mathbb C$--linear) subspace $\widehat F$ of $E \otimes \mathbb C$ spanned by $\{u_j \otimes 1\}$. Incidentally, note that $\{u_j \otimes 1\}$ are linearly independent in $E \otimes \mathbb C$ because they are clearly independent as elements of the complexification of $E$, $E^{\mathbb C} = E \otimes_{\mathbb R} \mathbb C$. One can check that:
\centerline{
$\widehat{F_1 \cap F_2} = \widehat F_1 \cap \widehat F_2$ \qquad and \qquad $\widehat{g(F)} = \hat g(\widehat F)$.
}

Assume that $\hat{g}$ has an eigenvector $v \in E \otimes \mathbb{C}$ of eigenvalue $\lambda$. Taking $\{u_j\}$ to be a basis of $E$ we have that $\{u_j \otimes 1\}$ is a basis of $E \otimes \mathbb{C}$, and so $v$ is a linear combination of finitely many of them. Thus there exists an $F \subset E$ spanned by finitely many of the $u_j$ such that $v \in \hat{F}$. Among all such finite dimensional $\mathbb Q$--linear subspaces $F$ of $E$, choose one with the smallest dimension. Then we must have $g(F) = F$ since, otherwise, we could replace $F$ with $F \cap g(F)$ because $v \in \widehat{F} \cap \hat{g}(\widehat{F}) = \widehat{ F \cap g(F)}$.

The minimal polynomials of $\hat{g}$ in $\widehat F$ and $g$ in $F$ coincide. Denote the latter by $P_F$. Then, $P_F(\lambda) = 0$, so $P_F = Q \cdot P_{\lambda}$ for some $Q(t) \in \mathbb Q[t]$. The conclusion follows from the fact that any nonzero vector in $\mathrm{Im}(Q(g)_{|F})$ has minimal polynomial equal to $P_{\lambda}$.
\end{proof}

\begin{remark} In the sequel we will make no notational distinction between $f^*_{\mathbb{Q}}$ and $f^*_{\mathbb{C}}$ and $d$ will denote the degree of $\lambda$ over $\mathbb Q$, $d = \mathrm{deg}(\lambda) = \mathrm{deg}(P_{\lambda})$.
\end{remark}

\begin{lemma}\label{lem:algebraiccombination}
If $f^*$ has an eigenvalue $\lambda \in \mathbb C$, there exists an eigenvector $z \in \check{H}^1(X; \mathbb C)$ for $\lambda$ of the form $z = \sum_{j=0}^{d-1} \mu_j w_j$, where $\mu_j \in \mathbb C$ are algebraic numbers and $w_j \in \check{H}^1(X; \mathbb Q)$.
\end{lemma}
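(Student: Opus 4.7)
The plan is to bootstrap directly from Lemma \ref{lem:reduceQ} and a standard computation with companion matrices. First I would invoke Lemma \ref{lem:reduceQ} to obtain a nonzero $w \in \check{H}^1(X;\mathbb{Q})$ whose minimal polynomial under $f^*_{\mathbb{Q}}$ is exactly $P_\lambda$. The key observation is that this forces the cyclic subspace
\[
F := \mathrm{span}_{\mathbb{Q}}\{w,\, f^*w,\, (f^*)^2 w,\, \ldots,\, (f^*)^{d-1}w\} \subseteq \check{H}^1(X;\mathbb{Q})
\]
to be $d$-dimensional (otherwise $w$ would satisfy a polynomial relation of degree smaller than $d$), and $f^*$ preserves $F$.

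Next I would pick the ordered basis $w_j := (f^*)^j w$ for $j = 0, \ldots, d-1$, so that the matrix of $f^*|_F$ in this basis is precisely the companion matrix of $P_\lambda$. After tensoring with $\mathbb{C}$ (using the identification $\check{H}^1(X;\mathbb{Q}) \otimes \mathbb{C} \cong \check{H}^1(X;\mathbb{C})$ recorded just before Lemma \ref{lem:reduceQ}), the extended operator still has companion matrix $P_\lambda$ in the basis $\{w_j \otimes 1\}$. A direct inspection of the companion matrix shows that $(1,\lambda,\lambda^2,\ldots,\lambda^{d-1})^T$ is an eigenvector for eigenvalue $\lambda$; interpreting this back in $\check{H}^1(X;\mathbb{C})$ produces
\[
z := \sum_{j=0}^{d-1} \lambda^j\, w_j,
\]
which is a nonzero eigenvector of $f^*$ with eigenvalue $\lambda$. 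The coefficients $\mu_j := \lambda^j$ are algebraic because $\lambda$ is algebraic, and the $w_j$ lie in $\check{H}^1(X;\mathbb{Q})$ as required.

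There is no real obstacle here once Lemma \ref{lem:reduceQ} is available; the only step that warrants care is checking that the cyclic subspace has dimension exactly $d$ (which is immediate from minimality of $P_\lambda$) and that the companion-matrix eigenvector computation transfers correctly through the identification $F \otimes \mathbb{C} \hookrightarrow \check{H}^1(X;\mathbb{C})$. Both are routine.
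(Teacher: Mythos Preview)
Your overall strategy---invoke Lemma~\ref{lem:reduceQ}, pass to the $d$-dimensional cyclic subspace $F$ spanned by $w_j=(f^*)^jw$, and locate an eigenvector for $\lambda$ inside $F\otimes\mathbb{C}$---is exactly the route the paper takes. However, the specific eigenvector you write down is wrong. In the basis $\{w_0,\ldots,w_{d-1}\}$ the matrix of $f^*|_F$ is the companion matrix with $1$'s on the \emph{sub}diagonal (since $f^*w_j=w_{j+1}$), and for that matrix $(1,\lambda,\ldots,\lambda^{d-1})^T$ is a \emph{left} eigenvector, not a right one. Concretely, take $d=2$ and $P_\lambda(t)=t^2-2$: then $(f^*)^2w=2w$, and for $z=w+\sqrt{2}\,f^*w$ one computes $f^*z=f^*w+2\sqrt{2}\,w$ while $\sqrt{2}\,z=\sqrt{2}\,w+2f^*w$, which disagree.

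The fix is immediate and leaves your argument intact. The correct right eigenvector of the companion matrix has entries that are polynomials in $\lambda$ with rational coefficients (obtained by solving the recursion $v_{i-1}-c_iv_{d-1}=\lambda v_i$), hence still algebraic. Equivalently---and this is precisely what the paper does---one takes
\[
z=(f^*-\lambda_2)\cdots(f^*-\lambda_d)\,w=Q(f^*)w,\qquad Q(t)=P_\lambda(t)/(t-\lambda),
\]
so that $(f^*-\lambda)z=P_\lambda(f^*)w=0$. Expanding $Q$ gives $z$ as a combination of the $w_j$ with coefficients that are symmetric functions of $\lambda_2,\ldots,\lambda_d$, all algebraic. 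So your approach and the paper's coincide once the eigenvector is corrected.
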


\begin{proof}
By Lemma \ref{lem:reduceQ}, there exists $w \neq 0 \in \check{H}^1(X;\mathbb{Q})$ whose minimal polynomial is $P_{\lambda}(t)$. Let $\lambda = \lambda_1, \ldots, \lambda_d \in \mathbb{C}$ be its complex roots. Then, $P_{\lambda}(t) = (t-\lambda)\cdot (t - \lambda_2) \ldots (t-\lambda_d)$.

Observe that $\{w,f^*w, \ldots, (f^*)^{d-1}w\}$ are linearly independent in $\check{H}^1(X;\mathbb{Q})$ and therefore also in $\check{H}^1(X;\mathbb{C})$. Define an element $z \in \check{H}^1(X;\mathbb{C})$ by \[z := (f^* - \lambda_2) \ldots (f^* - \lambda_d) w.\] (If $\mathrm{deg}(P_{\lambda}) = 1$ then $z = w$). Evidently $(f^* - \lambda) z = P_{\lambda}(f^*)w = 0$, and so $f^* z = \lambda z$. Also, $z$ is nonzero. To check this expand its definition to get
\begin{align*} z & = (f^*)^{d-1} w - (\lambda_2 + \ldots + \lambda_d) (f^*)^{d-2} w + \ldots + (-1)^{d-1} \lambda_2 \ldots \lambda_d w \\
& = \mu_{0} (f^*)^{d-1}w + \mu_{1} (f^*)^{d-2}w + \ldots + \mu_{d-1} w
\end{align*}
which is a linear combination of $\{w, f^*w, \ldots, (f^*)^{d-1}w\}$ with complex coefficients $\mu_j$ at least one of which is nonzero ($\mu_0 = 1$). In particular, $z \neq 0$ and $z$ is an eigenvector of eigenvalue $\lambda$.

Finally, observe that the $\mu_j$ are all algebraic. Indeed, all the $\mu_j$ (which are sums and products of the $\lambda_j$) are algebraic numbers since the set of algebraic numbers is a field (see for instance \cite[Corollary 2.6, p. 232]{lang1}).
\end{proof}

\begin{remark}\label{rmk:eigenvectorU}
In a very similar way to Step 1 in Subsection \ref{subsec:common}, we can assume that the description of $z$ in the lemma is valid at scale $\mathcal U$. Indeed, we can find an open covering $\mathcal V$ of $X$ and classes $z' \in H^1_{\mathcal U}(X; \mathbb C), w_j' \in H^1_{\mathcal U}(X; \mathbb Q)$ such that $\pi_{\mathcal V}(z') = z, \pi_{\mathcal V}(w_j') = w_j$. Then, for some $\mathcal U$ finer than $\mathcal V$, $\pi_{\mathcal V \mathcal U}(\sum \mu_j w'_j) = \pi_{\mathcal V \mathcal U}(z')$. So, with an already familiar abuse of notation 

\centerline{$z = \sum \mu_j w_j$ in $H^1_{\mathcal U}(X; \mathbb C)$,}

\noindent where $w_j \in H^1_{\mathcal U}(X; \mathbb Q)$. As in Step 1, by refining $\mathcal U$ even further we can also guarantee that the eigenvector equation $f^*z = \lambda z$ holds in $H^1_{\mathcal U}(X; \mathbb C)$.
\end{remark}

\subsection{A Diophantine approximation lemma}
Let us start with a theorem due to Schmidt that generalizes several celebrated classical results that roughly state that irrational algebraic numbers are badly approximated by rational numbers. The precise result is Theorem 2 from \cite{schmidt} (see also \cite{waldschmidt1}):

\begin{theorem}
Let $\nu_1, \ldots, \nu_m$ be real algebraic numbers such that $\{1, \nu_1, \ldots, \nu_m\}$ are linearly independent over $\mathbb Q$. For every $\epsilon > 0$ there are only finitely many $m$--tuples of nonzero integers $a_1, \ldots a_m$ with
\[
\mathrm{dist}(a_0 +  \nu_1 a_1 + \ldots + \nu_m a_m, \mathbb Z) \ge \frac{1}{|a_1 \cdots a_m|^{1 + \epsilon}}
\]
\end{theorem}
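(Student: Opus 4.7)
The statement is a classical result of Schmidt on simultaneous Diophantine approximation (Theorem 2 of \cite{schmidt}); my plan is to deduce it from the much more famous \emph{Schmidt Subspace Theorem}, which is the central result of that same paper and reads as follows: given $n$ linearly independent linear forms $L_1,\ldots,L_n$ in $n$ variables with real algebraic coefficients, and any $\delta>0$, the nonzero integer vectors $\mathbf{x}\in\mathbb{Z}^n$ satisfying $|L_1(\mathbf{x})\cdots L_n(\mathbf{x})|<|\mathbf{x}|^{-\delta}$ (with $|\mathbf{x}|=\max_i|x_i|$) lie in a finite union of proper rational subspaces of $\mathbb{Q}^n$. (I should remark that the $\ge$ in the displayed inequality above appears to be a typo for $<$; the statement as written would be vacuously uninteresting, whereas the $<$ version is the standard small-approximation form.)

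To carry out the reduction I would work in $n=m+1$ variables $(a_0,a_1,\ldots,a_m)$ with the linear forms $L_0(\mathbf{a})=a_0+\nu_1 a_1+\ldots+\nu_m a_m$ and $L_i(\mathbf{a})=a_i$ for $1\le i\le m$. These are linearly independent precisely because $\{1,\nu_1,\ldots,\nu_m\}$ are $\mathbb{Q}$-linearly independent, and the coefficients are algebraic since the $\nu_j$ are. Given an $m$-tuple $(a_1,\ldots,a_m)$ of nonzero integers for which $\mathrm{dist}(\nu_1 a_1+\ldots+\nu_m a_m,\mathbb{Z})<|a_1\cdots a_m|^{-1-\epsilon}$, pick $a_0\in\mathbb{Z}$ realising this distance, so that $|L_0(\mathbf{a})|<|a_1\cdots a_m|^{-1-\epsilon}$. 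Multiplying by $\prod_{i=1}^{m}|L_i(\mathbf{a})|=|a_1\cdots a_m|$ gives $\prod_{i=0}^{m}|L_i(\mathbf{a})|<|a_1\cdots a_m|^{-\epsilon}$. After some bookkeeping to compare $|a_1\cdots a_m|$ with the height $|\mathbf{a}|=\max_i|a_i|$ (essentially trading a little bit of $\epsilon$ for this comparison), the Subspace Theorem would apply and force all witness tuples $(a_0,a_1,\ldots,a_m)$ into finitely many proper rational hyperplanes of $\mathbb{Q}^{m+1}$.

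To finish, I would argue by induction on $m$. Within each such rational hyperplane we have a nontrivial relation $\sum_{i=0}^{m}c_i a_i=0$ with $c_i\in\mathbb{Q}$; using it to eliminate one of the $a_i$ turns the problem into an inhomogeneous simultaneous approximation problem in $m-1$ variables, with new real algebraic numbers that are still $\mathbb{Q}$-linearly independent with $1$ (this preservation needs to be verified case by case depending on whether the eliminated variable is $a_0$ or one of $a_1,\ldots,a_m$, and using the algebraicity of the $c_i$). The base case $m=1$ is the Thue--Siegel--Roth theorem, which asserts finiteness of rationals that approximate a real algebraic irrational $\nu_1$ to order $2+\epsilon$.

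The hard part is, of course, the Subspace Theorem itself: it is one of the deepest results in Diophantine approximation, and its proof (via geometry of numbers, heights, and an intricate induction generalising the Thue--Siegel--Roth method) is far outside the scope of the present paper. My plan therefore is to invoke it as a black box and concentrate only on the reduction above, which is the part that genuinely uses the hypothesis that $\{1,\nu_1,\ldots,\nu_m\}$ are $\mathbb{Q}$-linearly independent.
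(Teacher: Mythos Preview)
The paper does not prove this theorem at all: it is quoted verbatim as ``Theorem 2 from \cite{schmidt}'' and used as a black box to derive inequality \eqref{eq:schmidt} and then Lemma \ref{lem:diophantine}. So there is nothing to compare against; your proposal goes well beyond what the authors do.

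That said, your reduction to the Subspace Theorem is the standard one and is essentially correct. A couple of comments. First, the bookkeeping you allude to is easy: since each $a_i$ ($i\ge 1$) is a nonzero integer one has $|a_1\cdots a_m|\ge \max_{i\ge 1}|a_i|$, and from $|L_0(\mathbf a)|<1$ one bounds $|a_0|$ by a constant times $\max_{i\ge 1}|a_i|$, so $|a_1\cdots a_m|^{-\epsilon}$ is at most a constant times $|\mathbf a|^{-\epsilon}$ and the Subspace Theorem applies with any $\delta<\epsilon$. Second, the induction step (restricting to a rational hyperplane and eliminating a variable) is where the actual care is needed: one must check that after elimination the new linear form still has algebraic coefficients that, together with $1$, remain $\mathbb Q$--linearly independent, and that the exponent on the right is still of the correct shape. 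This is routine but not entirely trivial; if you carry it out you should distinguish the case $c_0\neq 0$ (eliminate $a_0$; the new $\nu$'s are $\nu_j - c_j\nu_k/c_k$ for some $k$ with $c_k\neq 0$, still independent with $1$) from $c_0=0$ (a relation among $a_1,\ldots,a_m$ alone). You are also right that the displayed inequality in the statement has the wrong direction: with $\ge$ the assertion is trivially false, and the paper's own use of the theorem (deriving the lower bound \eqref{eq:schmidt}) confirms that $<$ is intended.
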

The inequality immediately implies
\begin{equation}\label{eq:schmidt}
\left|a_0 + \nu_1 a_1 + \ldots + \nu_m a_m\right| \ge A \cdot(\max|a_i|)^{-(m+\epsilon)}
\end{equation}
for all $(m+1)$--tuples of integers $a_j$, with $a_j \neq 0$ if $j > 0$, and some constant $A$ that only depends on $\epsilon$. Now, if we assume that the left hand side of the inequality is not zero, at the expense of replacing $A$ by a larger constant we can suppose that $(a_0, a_1, \ldots, a_m)$ ranges over all $\mathbb Z^{m+1}$ and the numbers $1, \nu_1, \ldots, \nu_m$ are not necessarily rationally independent (use the linear relations to simplify in \eqref{eq:schmidt} until it only depends on a maximal subset of independent $\nu_j$, the factors are absorbed by $A$).
Then, it is clear that the inequality also applies to complex $\nu_j$ (note that rational linear independence of complex numbers is weaker than independence of its real or imaginary parts).

Plug $\mu_0 = 1, \mu_1, \ldots, \mu_{d-1}$ from Lemma \ref{lem:algebraiccombination} in \eqref{eq:schmidt} to deduce that for every $\epsilon > 0$ there exists a constant $A > 0$ such that
\begin{equation}\label{eq:liouville}
\left| \sum_{j = 0 }^{d-1} \mu_j C_j \right| \ge \frac{A}{(\max |C_j|)^{d-1+\epsilon}}
\end{equation}
for every $(C_0, C_1, \ldots, C_d) \in \mathbb Z^{d+1}$ unless the sum on the left vanishes.

In view of the decomposition of $z$ proved in Lemma \ref{lem:algebraiccombination}, the possible nonzero values that the integral of $z$ along an arbitrary homology class $\gamma$ may take can be bounded from below in terms of the number of simplices that compose a $\mathcal U$--small representative of $\gamma$.

\begin{lemma} \label{lem:diophantine} Let $\mathcal{U}$ be a finite open covering of $X$. Suppose $z \in H^1_{\mathcal{U}}(X;\mathbb{C})$ has the form \[z = \sum_{j=0}^{d-1} \mu_j w_j\] where each $\mu_j$ is an algebraic complex number and all the $w_j \in H^1_{\mathcal{U}}(X;\mathbb{Q})$. Then, for every $\epsilon > 0$ there exist a constant $B > 0$ such that, for any $\mathcal{U}$--small cycle $[c] \in H_1^{\mathcal{U}}(X;\mathbb{Z})$, the integral $\int_{[c]} z$ is either zero or its absolute value is bounded below as \[\left| \int_{[c]} z \right| \geq \frac{B}{\|c\|_1^{d-1+\epsilon}}.\]
\end{lemma}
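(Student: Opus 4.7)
The plan is to reduce the claim to the Diophantine inequality \eqref{eq:liouville}. By the linearity of the integral, $\int_{[c]}z=\sum_{j=0}^{d-1}\mu_j\int_{[c]}w_j$, so I would show that each rational number $\int_{[c]}w_j$ becomes, after rescaling by a single integer $N$ independent of $c$, an integer $C_j$ with $|C_j|\le M\|c\|_1$ for a constant $M$ independent of $c$. Since $\mu_0=1,\mu_1,\ldots,\mu_{d-1}$ are algebraic (recall from Lemma~\ref{lem:algebraiccombination} that one may take $\mu_0=1$), \eqref{eq:liouville} will then control $|\sum_j\mu_jC_j|$ from below and yield the desired lower bound for $|\int_{[c]}z|$.

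To implement this I would first choose, for each $j\in\{0,\ldots,d-1\}$, a representative cocycle $\xi_j$ of $w_j$ whose image is a finite subset of $\mathbb{Q}$. Such finite-image rational representatives are delivered by the construction used in the proof of Proposition~\ref{prop:imagenfinita}, provided $\mathcal{U}$ is a star refinement of a coarser covering on which the $w_j$ are already defined; this is harmless because, as recorded in Remark~\ref{rmk:eigenvectorU}, the covering $\mathcal{U}$ in the application can be taken as fine as needed. Next, let $N\in\mathbb{Z}^{+}$ be a common denominator for the finite set $\bigcup_j\xi_j(X^2)$ and set $M:=N\max_j\|\xi_j\|_\infty$. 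For any $\mathcal{U}$-small cycle $c=\sum_i k_i\sigma_i$ with $k_i\in\mathbb{Z}$, the quantity
\[
C_j\;:=\;N\,\xi_j(c)\;=\;\sum_i k_i\,\bigl(N\xi_j(\sigma_i)\bigr)
\]
is an integer satisfying $|C_j|\le\sum_i|k_i|\cdot N\|\xi_j\|_\infty\le M\|c\|_1$, while by Theorem~\ref{teo:welldefined} its value depends only on the homology class $[c]$.

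To finish I would write $\int_{[c]}z=\tfrac{1}{N}\sum_{j=0}^{d-1}\mu_jC_j$ and, assuming this is nonzero, apply \eqref{eq:liouville} with $m=d-1$ to the tuple $(C_0,\ldots,C_{d-1})\in\mathbb{Z}^d$ to obtain
\[
\Bigl|\sum_{j=0}^{d-1}\mu_jC_j\Bigr|\;\ge\;\frac{A}{(\max_j|C_j|)^{d-1+\epsilon}}\;\ge\;\frac{A}{(M\|c\|_1)^{d-1+\epsilon}}.
\]
Dividing by $N$ yields the bound claimed in the lemma with $B:=A/(N M^{d-1+\epsilon})$, a constant that depends on $\epsilon$, $z$ and the chosen representatives but not on $c$.

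The hard part is the existence of rational finite-image representatives $\xi_j$ at the fixed scale $\mathcal{U}$, since Proposition~\ref{prop:imagenfinita} is framed in the direct limit and its proof produces such representatives at scale $\mathcal{U}$ only when $\mathcal{U}$ is a star refinement of a coarser covering carrying the $w_j$. This is a mild technical issue rather than a substantive one, since in the application to Theorem~\ref{teo:nmanning} one is free to refine $\mathcal{U}$; once this is taken care of, the rest of the proof is essentially a packaging of Schmidt's theorem in the form \eqref{eq:liouville}.
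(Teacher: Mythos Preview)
Your proposal is correct and follows essentially the same route as the paper: choose finite-image rational representatives $\xi_j$ of the $w_j$, clear denominators by an integer (the paper calls it $D$, you call it $N$) so that the evaluations $C_j$ become integers bounded by a constant times $\|c\|_1$, and then feed $\sum_j\mu_jC_j$ into the Schmidt-type inequality \eqref{eq:liouville}. The technical caveat you flag about obtaining finite-image representatives at the given scale $\mathcal{U}$ is handled in the paper exactly as you suggest, by allowing $\mathcal{U}$ to be refined (cf.\ Remark~\ref{rmk:eigenvectorU}).
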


\begin{proof} Each $w_j$ is represented by some cocycle $\xi_j$ (with values in $\mathbb{Q}$) whose coboundary va\-nishes at scale $\mathcal{U}$, and we may assume without loss of generality that ${\rm im}\ \xi_j$ is finite by Corollary \ref{prop:imagenfinita}. Thus, there exists an integer $D > 0$ such that the image of each $D \xi_j$ consists of integer numbers.

Write $c = \sum_i k_i \sigma_i$ with $k_i \in \mathbb{Z}$. By definition the integral $\int_{[c]} z = \sum_j \mu_j \int_{[c]} w_j$ is just the sum

\[
S = \displaystyle\sum_j \mu_j \displaystyle\sum_i k_i \xi_j(\sigma_i) \quad \text{so} \quad DS = \displaystyle\sum \mu_j C_j, \quad \text{where we define} \enskip C_j := D \displaystyle\sum_i k_i \xi_j(\sigma_i) \in \mathbb Z.
\]

We can estimate $C_j$ by  $C_j| \leq D  \|\xi_j\|_{\infty} \sum_i |k_i| = D \|\xi_j\|_{\infty} \|c\|_1,$ so in particular 

\[
\max |C_j| \leq D \|c\|_1 M, \quad \text{where} \enskip M := \max \|\xi_j\|_{\infty}.
\]

Now inequality \eqref{eq:liouville} shows that if $S$ is nonzero, then it is bounded below by \[|S| \geq \frac{1}{|D|}\frac{A}{(\max |C_j|)^{d-1+\epsilon}} \geq \frac{A}{D^{d+\epsilon} M^{d-1+\epsilon} \|c\|_1^{d-1+\epsilon}}.\] It only remains to group everything other than $\|c\|_1^{d-1+\epsilon}$ into a constant $B$. This constant depends on $D$ and $M$ (which are fixed once the choice of the cochains $\xi_j$ is done), $A$, which is determined by the $\mu_j$ and depends on $\epsilon$, and $\epsilon$ itself. The result follows.
\end{proof}

\subsection{Proof of Theorem \ref{teo:nmanning}}
Consider the eigenvector $z$ of eigenvalue $\lambda$ from Lemma \ref{lem:algebraiccombination} and the open covering $\mathcal U$ found in Remark \ref{rmk:eigenvectorU}. Recall that from Steps 1-3 in Subsection \ref{subsec:common} we have
\begin{align}\label{eq:final}
|\lambda^n| \left|\int_{[s_n]} z \right| \le ||s_n||_1 ||\eta||_{\infty}
\end{align}
Since $z$ can be thought of as a cohomology class at scale $\mathcal U$ and $z \neq 0 \in \check{H}^1(X; \mathbb C)$, it defines a non-trivial element of $H^1_{\mathcal V_n}(X; \mathbb C)$ which by the non degeneration of the integral at a specific scale (Proposition \ref{prop:nondeg_U}) has a nonzero integral over some element in $H_1^{\mathcal V_n}(X; \mathbb C)$.  By Proposition \ref{prop:scc} the simple elementary cycles generate $H_1^{\mathcal{V}_n}(X;\mathbb{Z})$ and therefore also $H_1^{\mathcal{V}_n}(X;\mathbb{C})$, so in fact there exists a $\mathcal V_n$--small simple elementary cycle $s_n$ such that $\int_{[s_n]} z \neq 0$.

For the rest of the proof consider $\epsilon > 0$ fixed.
Now, by Lemma \ref{lem:diophantine} applied to $z = \sum_{j = 0}^{d-1} \mu_j w_j$ there exist a constant $B > 0$ such that
\[
\left| \int_{[s_n]} z \right| \ge \frac{B}{||s_n||_1^{d-1+\epsilon}}
\]
Combine this equation with (\ref{eq:final}) and recall that, by definition, a $\mathcal{V}_n$--small simple cycle satisfies $\|s_n\|_1 \leq |\mathcal{V}_n|$. We deduce

\[B |\lambda|^n \leq  \|\eta\|_{\infty} |\mathcal{V}_n|^{d+\epsilon}.\]

Notice that in this inequality none of $\|\eta\|_{\infty}, d, \epsilon$ depends on $n$. A straightforward computation then shows that \[\frac{\log |\lambda|}{d+\epsilon} \leq \lim_{n \rightarrow +\infty} \frac{1}{n} \log |\mathcal{V}_n|,\]
and since $\mathcal V_n$ was an arbitrary finite subcover of $\mathcal U \vee \cdots \vee f^{-n}\mathcal U$ for every $n$ (and $\epsilon > 0$ is arbitrary) this bounds $h(f, \mathcal U)$ from below. It follows that $h(f) \ge (\mathrm{log}|\lambda|)/d > 0$. This finishes the proof of Theorem \ref{teo:nmanning}.

%\begin{remark} %To obtain an explicit lower bound for $h(f)$ one only needs to keep track of the value of $L$. This number first appeared in the proof when Liouville's inequality was used and admits the expression $L = D (2 + \sum_{j=0}^{d-1} h(\mu_j))$ where $h(\mu_j)$ is the height of $\mu_j$ and $D$ is the degree of the field extension $\mathbb{Q}(\mu_0,\ldots,\mu_{d-1}) / \mathbb{Q}$. This extension is certainly contained in $\mathbb{Q}(\lambda_1,\ldots,\lambda_d) / \mathbb{Q}$, which has degree at most $d$ since all the $\lambda_i$ are roots of the polynomial $M(t)$ of degree $d$, so $D \leq d$. Bounding the heights $h(\mu_j)$ also by $d$ yields $L \leq d(2+d^2) = d^3 + 2d$. Plugging this into $L+1$ at the end of the proof of the theorem gives the bound \[h(f) \geq \frac{\log |\lambda|}{d^3+2d+1}.\]

%A priori $d$ may depend on the $w$ and $M$ that came out of Lemma \ref{lem:reduceQ}. However, is not difficult to elaborate on the proof of the lemma to show that $M(t)$ is actually the minimal polynomial of $\lambda$ in the sense of field extensions. This means that $M(\lambda) = 0$ and if $P(t) \in \mathbb{Q}[t]$ is any other nonzero polynomial that has $\lambda$ as a root, then $M | P$. Thus $d$ is in fact the degree of $\lambda$ over $\mathbb{Q}$. %For example, for a rational eigenvalue $\lambda$ we then have $d = 1$ and the bound $h(f) \geq \nicefrac{1}{4} \log |\lambda|$. As another example, one always has $d \leq  \dim \check{H}^1(X;\mathbb{C})$ which yields a numerical bound for $h(f)$ when $\check{H}^1(X;\mathbb{C})$ is finite dimensional.
%\end{remark}

\bibliographystyle{plain}
\bibliography{biblio}

\end{document}